\documentclass[11pt,a4paper]{amsart}
\usepackage[T1]{fontenc}
\usepackage[utf8]{inputenc}
\usepackage{mathpazo}
\usepackage{amsmath,amssymb,mathtools,mathrsfs}
\usepackage[a4paper,margin=2.8cm]{geometry}
\usepackage{microtype,enumitem}
\usepackage[hidelinks]{hyperref}
\hypersetup{pdftitle={Cauchy laws for zeta logarithmic derivatives and stationary Stieltjes transforms},pdfauthor={Joseph Najnudel and Ashkan Nikeghbali},pdfkeywords={Cauchy distribution, stationary random measures, Stieltjes transform, Riemann zeta function, finite fields}}
\allowdisplaybreaks[2]
\numberwithin{equation}{section}
\newtheorem{theorem}{Theorem}[section]
\newtheorem{proposition}[theorem]{Proposition}
\newtheorem{lemma}[theorem]{Lemma}
\newtheorem{corollary}[theorem]{Corollary}
\makeatletter
\@addtoreset{theorem}{subsection}
\makeatother
\renewcommand{\thetheorem}{\ifnum\value{subsection}>0 \thesubsection.\arabic{theorem}\else\thesection.\arabic{theorem}\fi}

\theoremstyle{definition}

\newtheorem{example}[theorem]{Example}
\theoremstyle{remark}
\newtheorem{remark}[theorem]{Remark}
\newcommand{\R}{\mathbb R}
\newcommand{\C}{\mathbb C}
\newcommand{\Z}{\mathbb Z}

\newcommand{\E}{\mathbb E}
\renewcommand{\P}{\mathbb P}
\newcommand{\I}{\mathcal I}
\newcommand{\Law}{\operatorname{Law}}
\newcommand{\supp}{\operatorname{supp}}
\newcommand{\pv}{\operatorname{pv}}
\newcommand{\dd}{\,\mathrm d}
\newcommand{\ii}{\mathrm i}
\newcommand{\Cauchy}{\operatorname{Cauchy}}
\newcommand{\dBL}{d_{\mathrm{BL}}}
\newcommand{\dK}{d_{\mathrm K}}

\newcommand{\Hol}{\mathcal H}

\title[Cauchy laws for zeta logarithmic derivatives]{Cauchy laws for zeta logarithmic derivatives\\ and stationary Stieltjes transforms}
\author{Joseph Najnudel}
\author{Ashkan Nikeghbali}
\date{5 September 2026}
\subjclass[2020]{11M26, 60G55, 60E07, 11G20, 11G25, 60B20}
\keywords{Cauchy distribution, stationary random measure, Stieltjes transform, Riemann zeta function, zeta functions over finite fields}
\begin{document}
\begin{abstract}
We prove an unconditional Cauchy limit for a symmetrically truncated Stieltjes transform of the projected ordinates of the zeros of the Riemann zeta function. A suitable normalization of the logarithmic derivative of $\zeta$ on the critical line has the same limit provided that the sum of the distances to the critical line of the zeros lying to its right, counted with multiplicity up to height 
$2T$, is $o(T)$; we give an explicit bound on the comparison error. The proof rests on a convergence theorem for the Stieltjes transform of positive stationary point measures whose counting discrepancy satisfies an integrability criterion. This theorem is obtained by comparison with the transforms of periodic point measures, combined with truncation bounds and a transfer theorem. Over finite fields, a classical cotangent identity yields Cauchy limits for the logarithmic derivatives of zeta functions of varieties: the law is exactly Cauchy for nonconstant pure cohomological factors, and Poincar\'e duality gives explicit errors for full zeta functions, including Cauchy limits for smooth hypersurfaces of increasing degree, uniformly in the base field.
\end{abstract}
\maketitle
\pagestyle{plain}
\section{Introduction}
We study Cauchy laws for logarithmic derivatives, with applications to the Riemann zeta function and to zeta functions over finite fields. We begin with the two number-field limits that motivate the approximation results of this paper.

For $T \geq 100$, put $L=\log T$ and let $t_T$ be uniform on $[T/L,T]$. Write the distinct nontrivial zeros of $\zeta$ as $\rho=\beta+\ii\gamma$, with multiplicities $m(\rho)$, and set
\begin{equation}\label{eq:intro-horizontal}
 u_{\rho,T}=\frac{(\gamma-t_T)L}{2\pi},\qquad
 H(T)=\sum_{\substack{0<\gamma\le2T\\\beta>1/2}}
 m(\rho)(\beta-1/2).
\end{equation}
We write $\Cauchy(a,b)$ for the Cauchy law with location $a$ and scale $b>0$, 
i.e. the law of $a + bX$, where $X$ is a random variable following the standard Cauchy distribution $\Cauchy(0,1)$, which has density $[\pi(1+x^2)]^{-1}$.

\begin{theorem}[Main arithmetic results]\label{thm:intro-arithmetic}
Without assuming the Riemann hypothesis,
\begin{equation}\label{eq:intro-projected}
 P_T:=-\frac1\pi\sum_{0<|u_{\rho,T}|\le L^2}
 \frac{m(\rho)}{u_{\rho,T}}
 \ \Longrightarrow\ \Cauchy(0,1).
\end{equation}
If $H(T)=o(T)$, then
\begin{equation}\label{eq:intro-zeta}
 A_T:=\frac{2\ii}{L}\frac{\zeta'}{\zeta}
 \left(\frac12+\ii t_T\right)+\ii
 \ \Longrightarrow\ \Cauchy(0,1).
\end{equation}
The second convergence also holds with $t_T$ replaced by $T\omega$, where $\omega$ is uniform on $[0,1]$.
\end{theorem}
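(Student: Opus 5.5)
The proof proceeds in two stages: first the projected limit \eqref{eq:intro-projected}, by a general stationary-measure theorem, and then a transfer to $\zeta'/\zeta$ through the Hadamard product, with an error controlled by $H(T)$.

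For \eqref{eq:intro-projected}, view the rescaled ordinates $\{u_{\rho,T}\}$, weighted by $m(\rho)$, as a random point measure $\mu_T$ on $\R$; the randomness comes from $t_T$. The relevant properties follow from the Riemann--von Mangoldt formula $N(t)=\frac{t}{2\pi}\log\frac{t}{2\pi e}+O(\log t)$. On windows of length $O(L^2)$ around $t_T\in[T/L,T]$, the local density of $\mu_T$ is $\log(t/2\pi)/L=1+O(\log L/L)$. The counting discrepancy $\mu_T([0,x])-x$ is $S(t_T+2\pi x/L)-S(t_T)$ plus a small drift, and Selberg's unconditional moment bounds give $\E|S(t_T+h)-S(t_T)|^2\ll\log(2+hL)$, uniformly in $h$. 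Averaging over $t_T$ makes $\mu_T$ asymptotically stationary. I would therefore invoke the convergence theorem for Stieltjes transforms of stationary point measures announced in the abstract: for a positive stationary point measure of intensity one whose discrepancy satisfies the integrability criterion, the symmetrically truncated transform $-\frac1\pi\sum_{0<|u|\le R}1/u$ converges in law to $\Cauchy(0,1)$ as $R\to\infty$. That theorem is proved by comparison with periodic point measures, where the cotangent identity gives an exactly Cauchy law. Applying it here requires a quantitative coupling between the law of $\mu_T$ restricted to $[-L^2,L^2]$ and a stationary model, or the theorem's quantitative version in $\dBL$ with errors expressed through $\E|\text{discrepancy}|$. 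The criterion amounts to something like $\int^{R}\E|D(x)|x^{-2}\dd x$ being controlled, and Selberg's bound $\E|D(x)|\ll\sqrt{\log(2+x)}$ makes the tail $\int_{R}^{\infty}\sqrt{\log x}\,x^{-2}\dd x\to0$. The truncation at $L^2$ also has to be matched with zeros beyond $L^2$. This step, which is summation by parts against the discrepancy with $1/u$ weights, is what produces the integrability condition.

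For \eqref{eq:intro-zeta}, use the partial-fraction (Hadamard) expansion
\[\frac{\zeta'}{\zeta}(s)=\sum_\rho m(\rho)\Bigl(\frac1{s-\rho}\Bigr)'_{\mathrm{sym}}+B-\tfrac12\log\pi+\tfrac12\frac{\Gamma'}{\Gamma}\bigl(\tfrac s2+1\bigr)+\dots,\]
written in the standard form $\sum_{|\gamma-t|\le1}\frac{1}{s-\rho}-\tfrac12\log t+O(\log t)$ refined to the sharper symmetric version. At $s=\frac12+\ii t$, a zero on the line contributes $1/(\ii(t-\gamma))=\ii L/(2\pi u_\rho)\cdot(-1)$ after rescaling, so $\frac{2\ii}{L}\cdot\frac1{s-\rho}=-\frac1{\pi u_\rho}$. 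The archimedean term $-\frac12\log(t/2\pi)$ gives $\frac{2\ii}{L}\cdot(-\frac12 L)=-\ii$, which cancels against $+\ii$. Zeros with $|u|>L^2$ contribute $o(1)$ in probability, again by summation by parts with the density of zeros; the pairing of $\rho$ with $1-\bar\rho$ keeps this symmetric. For zeros off the line, the pair $\rho,1-\bar\rho$ at distance $\delta=\beta-\frac12$ gives
\[\frac1{\ii(t-\gamma)-\delta}+\frac1{\ii(t-\gamma)+\delta}-\frac{2}{\ii(t-\gamma)}=O\Bigl(\frac{\delta^2}{|t-\gamma|(|t-\gamma|^2+\delta^2)}\Bigr).\]
Integrating over $t\in[T/L,T]$, each pair costs $O(\delta)$ in $L^1$, apart from a principal-value singularity. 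So, after multiplying by $2/L$ and dividing by the interval length $T(1-1/L)$, the $L^1$ error is $\ll H(T)/T$. Since $\delta\le1/2$, I would rather bound the difference in the weak-$L^1$ sense, which still suffices for convergence in law. The result then follows from \eqref{eq:intro-projected} by Slutsky's lemma. For $t=T\omega$, the portion $\omega<1/L$ has vanishing probability, and on $[T/L,T]$ the law is a mixture of the same kind, so the conclusion carries over.

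The main obstacle is the first stage: making the stationary-Stieltjes theorem applicable to $\mu_T$, which is only approximately stationary, with local density drifting by $O(\log L/L)$, so that the error it induces in a sum of $L^2$ terms weighted $1/u$ is $o(1)$. I would first try to condition on $t_T$ lying in a short block where the density is essentially constant, and control the truncation error in $L^1$ using Selberg's moments.
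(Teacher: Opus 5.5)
Both of your stages have a gap exactly where the real work lies.

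\textbf{Stage one.} Your first stage does not go through as planned. Theorem~\ref{thm:stationary} and its quantitative form Theorem~\ref{thm:truncation} require an exactly stationary measure. $M_T$ is stationary at no finite $T$, and since no limiting point process for the zeros is known unconditionally, there is no stationary model to couple it to. Conditioning $t_T$ on a short block does not create stationarity. The density drift you single out is also harmless: it cancels to first order in the imbalance $Q_T(x)=M_T((0,x])-M_T([-x,0))$, and in any case $|x|\log L/L\le|x|^{2/3}\log L/L^{1/3}$ for $|x|\le L^2$.

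The missing idea is compactness followed by transfer. The bound $\E|N_{M_T}(x)-x|^2\ll1+|x|^{4/3}$, uniform for $|x|\le L^2$, gives vague tightness. Every subsequential limit $M$ is stationary: translating $M_T$ by $h$ amounts to moving the uniform height by $2\pi h/L$, which changes its law by $O(|h|/(TL))$ in total variation. $M$ has intensity one by mean convergence and uniform integrability, and it inherits the discrepancy bound by Fatou. Corollary~\ref{cor:variance} then makes $F_M(0)/\pi$ standard Cauchy. That is a statement about $M$, not about $P_T$, so you still need Theorem~\ref{thm:transfer}. It has three ingredients:
\begin{enumerate}[label=(\alph*)]
\item continuous mapping for the kernel on $\delta<|x|\le A$;
\item exclusion of an atom near the observation point, using integer masses, Markov's inequality and $\E M_T((-\delta,\delta))\to2\delta$;
\item a bound on $\E\bigl|\int_{A<|u|\le L^2}u^{-1}\dd M_T(u)\bigr|$ uniform in $T$, obtained by summation by parts against $Q_T$ with $\E|Q_T(x)|\ll1+x^{2/3}$ up to $L^2$.
\end{enumerate}
A subsequence-of-subsequences argument then gives the limit along the full family. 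Your summation-by-parts remark supplies only the last of these ingredients.

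\textbf{Stage two.} Your pairing computation is right, but the estimate you draw from it is not. With $a=\beta-\tfrac12$ (your $\delta$), the pair contributes $-\tfrac4L g_a(t-\gamma)$ to $A_T-P_T$, where $g_a(x)=a^2/(x(x^2+a^2))=a^{-1}g_1(x/a)$. Since $|g_a(x)|\sim1/|x|$ at $x=0$, the absolute integral diverges; a principal value is of no use for an $L^1$ bound. Even $\int_{|x|>a}|g_a(x)|\,\dd x=\log2$ is independent of $a$, so no $L^1$ bound by $H(T)/T$ exists.

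Weak $L^1$ does not rescue this. Weak-type bounds cannot simply be added over the pairs. One needs the weak-$(1,1)$ inequality for the Hilbert transform of the signed measure $\sum m(\rho)\bigl(P_{\beta-1/2}(\cdot-\gamma)\,\dd y-\delta_\gamma\bigr)$, where $P_a(x)=a/(\pi(x^2+a^2))$. The resulting bound $\P(|C_T(t_T)|>\lambda)\ll N_{\mathrm{off}}(2T)/(\lambda T\log T)$ only counts off-line zeros. It therefore needs a density-one hypothesis, which is strictly stronger than $H(T)=o(T)$.

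What sees the size of the displacements is a fractional moment with $1/3<p<1$. Subadditivity of $|\cdot|^p$ together with $\int_\R|g_a|^p=c_p\,a^{1-p}$ gives $\E|C_T(t_T)|^p\ll W_p(2T)/(TL^p)$. At $p=1/2$, Cauchy--Schwarz with $N(2T)\ll TL$ gives $\E|C_T(t_T)|^{1/2}\ll\sqrt{H(T)/T}$, hence $C_T(t_T)\to0$ in probability, as in Proposition~\ref{prop:fractional}.

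Two further inputs are needed. First, the local factorization at the center with error $O(\log L/L)$ (Proposition~\ref{prop:local-factorization}); the standard expansion with $O(\log t)$ error is useless after multiplication by $2/L$. Second, off-line zeros outside the window require the separate bound of Lemma~\ref{lem:off-tail}.
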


These assertions are proved in Theorems~\ref{thm:projected} and~\ref{thm:zeta}. All zero counts include multiplicities, and no simplicity hypothesis is used.

\subsection{Horizontal collapse and the comparison estimate}
Horizontal projection of the renormalized zeros of $\zeta$ gives a positive point measure with stationary local limits. Counting estimates identify the Cauchy law of its truncated transform $P_T$ without requiring a unique limiting point process. For the comparison with the renormalized logarithmic derivative $A_T$ of $\zeta$, we prove
\begin{equation}\label{eq:intro-comparison}
 \dBL\bigl(\Law(A_T),\Law(P_T)\bigr)
 \le C\sqrt{\frac{H(T)}T}
       +\frac{C}{\log T},
\end{equation}
where $\dBL$ is the bounded-Lipschitz distance on $\C$. A square-root moment controls the off-critical correction; averaging the local factorization error gives the last term. A rate of convergence to the Cauchy law would require a rate for $P_T$. 

By Selberg's estimate, $H(T)=o(T)$ is equivalent to microscopic horizontal collapse: for every $\varepsilon>0$, only $o(T\log T)$ zeros up to height $2T$ have horizontal distance at least $\varepsilon/\log T$ from the critical line. Proposition~\ref{prop:collapse-equivalence} also characterizes it by the vanishing of every normalized positive horizontal moment. It follows from density one on the critical line, and permits small off-critical displacements.

Under the Riemann hypothesis, Sodin \cite[Section~3.4.5, equation~(55)]{Sodin} stated the logarithmic-derivative limit using Aizenman--Warzel theory. Here we prove the unconditional projected limit and the comparison under the stated weaker hypotheses. A critical-line local limit with Lebesgue mean measure suffices, including sine-process convergence. We also identify the Cauchy coordinate in a stochastic-zeta field limit.

\subsection{A quantitative probabilistic construction}
We approximate positive stationary point measures $M$ by periodic measures. Throughout, $M=\sum_{x\in X}w_x\delta_x$ is a positive random point measure which is locally finite. Our transform convention is
\begin{equation}\label{eq:intro-transform}
 F_M(z)=\lim_{A\to\infty}\int_{[-A,A]}\frac{\mathrm dM(x)}{z-x}.
\end{equation}
For stationary $M$ of intensity $K=\E M((0,1])\in(0,\infty)$, set $N_M(x)=M((0,x])$ for $x\ge0$ and $N_M(x)=-M((x,0])$ for $x<0$. Our criterion is
\begin{equation}\label{eq:intro-discrepancy}
 \E|N_M(x)-Kx|\le h(|x|)\quad (|x|\ge1),\qquad
 \int_1^\infty\frac{h(r)}{r^2}\dd r<\infty.
\end{equation}
Theorem~\ref{thm:stationary} below constructs $F_M$ simultaneously outside the support, proves translation invariance in distribution, and identifies $F_M(s)/(\pi K)$ as standard Cauchy, independently of the sigma-field $\I$ of events which are  invariant under all real shifts. No ergodicity is required. Truncation error is $O(A^{\alpha-1})$ for discrepancy $O(A^\alpha)$, $\alpha<1$, and $O(\sqrt{\log A}/A)$ for sine processes. 
Our proof starts by showing our result for periodic measures.
The finite input is a weighted cotangent identity of Pitman and Williams \cite{PitmanWilliams,Williams}; see \cite[Lemma~3.1]{PillaiMeng} and Boole's related identity \cite{Boole}. Aizenman--Warzel theory \cite{AW} encompasses the qualitative scalar laws. Our method 
 emphasis is periodization and quantitative transfer (Section~\ref{sec:analytic}).

\subsection{Zeta functions over finite fields}
Purity and the finite identity give exact Cauchy laws for nonconstant cohomological factors. For a smooth projective geometrically connected variety of dimension $n\ge1$ with $b_n>0$, Poincar\'e duality gives a coupling of the normalized, Euler-characteristic-centered logarithmic derivative of its full zeta function to a standard Cauchy variable, with error at most
\[
 \frac4{b_n}\sum_{j<n}
 \frac{b_jq^{(n-j)/2}}{q^{n-j}-1}.
\]
For smooth degree-$d$ hypersurfaces in fixed positive dimension $n$, this is $O_n(d^{-n-1})$, uniformly in the base field.

In Section~\ref{sec:finite}, we prove that the Stieltjes transform of a periodic positive
point measure, evaluated at a uniform point of a period, is Cauchy distributed. In
Section~\ref{sec:stationary}, we extend this result to random stationary positive point
measures, by comparison with periodic measures. In Section~\ref{sec:truncation}, we give
quantitative estimates on truncations of Stieltjes transforms and study the effect of asymmetric truncations. In Section~\ref{sec:transfer}, we prove a transfer
theorem: under suitable conditions, the Stieltjes transforms of a sequence of measures
converging to a stationary positive point measure converge to a Cauchy variable. In
Section~\ref{sec:analytic}, we compare our results with those given by the
Aizenman--Warzel theory. In Sections~\ref{sec:projected} and~\ref{sec:correction}, we apply
the results of the previous sections to the Riemann zeta function, using the local
factorization proved in Appendix~\ref{app:factorization}. Section~\ref{sec:functionfields}
treats zeta functions over finite fields.

\section{The finite and the periodic setting}\label{sec:finite}

The starting point is a weighted cotangent identity. Its conclusion holds conditionally on the entire configuration of poles and weights.

\begin{proposition}[Weighted cotangent identity]\label{prop:cotangent}
Let $\theta_1,\ldots,\theta_n\in[0,\pi)$ and $w_1,\ldots,w_n\ge0$, with $\sum_{j=1}^n w_j=1$, be possibly random. If $\Theta$ is uniform on $[0,\pi)$ and independent of these variables, then, conditionally on $(\theta_j,w_j)_{j=1}^n$,
\begin{equation}\label{eq:cotangent-law}
\sum_{j=1}^n w_j\cot(\Theta-\theta_j)\sim\Cauchy(0,1).
\end{equation}
In particular, the variable in \eqref{eq:cotangent-law} is independent of the weighted configuration.
\end{proposition}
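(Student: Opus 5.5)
Since $\Theta$ is independent of the configuration, it suffices to prove \eqref{eq:cotangent-law} for deterministic $\theta_j\in[0,\pi)$ and weights $w_j\ge0$ with $\sum_j w_j=1$. Conditioning on $(\theta_j,w_j)_{j=1}^n$ then gives the conditional statement, and because the conditional law does not depend on the configuration, independence follows. Weights equal to zero can be discarded, so we may assume every $w_j>0$. The sum is then defined for all $\Theta$ outside the finite set $\{\theta_j\}$, which has measure zero.

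The approach is to compute the characteristic function, or equivalently to push forward through the Cayley map. Put $\xi=e^{2\ii\Theta}$, which is uniform on the unit circle, and $a_j=e^{2\ii\theta_j}$. Then
\[
\cot(\Theta-\theta_j)=\ii\,\frac{\xi+a_j}{\xi-a_j}.
\]
The random variable therefore equals $S=\ii\,g(\xi)$ with
\[
g(\zeta)=\sum_j w_j\frac{\zeta+a_j}{\zeta-a_j}.
\]
This $g$ is a rational function with simple poles on the circle. For $|\zeta|<1$ it is a weighted sum of terms $-\frac{a_j+\zeta}{a_j-\zeta}$, each with negative real part, so $\operatorname{Re} g<0$ on the disc and $g(0)=-1$. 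Hence $f:=\ii g$ maps the disc into the upper half-plane and satisfies $f(0)=-\ii$. That is the wrong half-plane, and it is fixed by replacing $\xi$ with $\bar\xi=1/\xi$, which is also uniform. Then $h(\zeta):=\ii g(1/\zeta)$ is holomorphic on $|\zeta|<1$, with $h(0)=\ii\sum_j w_j=\ii$, $\operatorname{Im}h>0$, and $S\overset{d}{=}h(\xi)$ with $\xi$ uniform.

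The core step is then the following. For every $\lambda\in\R$, the function $e^{\ii\lambda h}$ (for $\lambda>0$) or $e^{-\ii\lambda h}$ (for $\lambda<0$) is bounded by $1$ on the disc. By the mean value property,
\[
\frac{1}{2\pi}\int_0^{2\pi}e^{\ii\lambda h(re^{\ii\phi})}\dd\phi=e^{\ii\lambda h(0)}=e^{-|\lambda|}
\]
for every $r<1$. Letting $r\to1$, dominated convergence applies: the boundary values of $h$ exist and are real off the finitely many poles, and the integrand is bounded by $1$. This gives $\E e^{\ii\lambda S}=e^{-|\lambda|}$, the characteristic function of $\Cauchy(0,1)$.

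The main obstacle, though a mild one, is justifying the passage $r\to1$. One has to check that $h(re^{\ii\phi})\to h(e^{\ii\phi})$ for every $\phi$ away from the poles, which holds because $h$ is rational, and that the integrand is uniformly bounded, which holds because $\operatorname{Im}h\ge0$ on the closed disc minus the poles. Both are immediate here.

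As an alternative that avoids complex analysis, one could note that $S$, as a function of $\Theta$, is strictly decreasing between consecutive poles and has derivative $-\sum_j w_j\csc^2(\Theta-\theta_j)$. Counting preimages of each level then gives a density. The harmonic-function argument is cleaner, however, and it is the one I would use.
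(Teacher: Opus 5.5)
Your argument is correct, but it takes a different route from the paper.

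The paper's proof is algebraic. It orders the distinct angles and uses positivity of the weights to see that the sum decreases strictly from $+\infty$ to $-\infty$ on each gap. It then identifies the $n$ solutions of $h(\theta)=q$ with the roots of a degree-$n$ polynomial in $e^{2\ii\theta}$. From the ratio of the constant and leading coefficients it reads off the total displacement $\sum_j(\theta'_j-\theta_j)=\operatorname{arccot}(q)$, which gives the tail $\P(h(\Theta)>q)=\operatorname{arccot}(q)/\pi$ directly.

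You instead realize the sum as the real boundary value of a Herglotz function on the disc with $h(0)=\ii$. You then compute the characteristic function from the mean value property of the bounded holomorphic function $e^{\ii\lambda h}$, and pass to the boundary by dominated convergence.

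What each route buys:
\begin{itemize}
\item Yours needs no ordering of the angles, no grouping of coincident angles and no root counting. It is essentially the mechanism behind the Aizenman--Warzel principle of Proposition~\ref{prop:aw}, and extends with little change to Herglotz functions whose boundary values are real almost everywhere.
\item The paper's route is elementary. It yields the distribution function explicitly and exhibits the monotone structure of the level sets that positivity of the weights provides.
\end{itemize}

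Two harmless slips should be fixed:
\begin{itemize}
\item Since $\operatorname{Im}(\ii g)=\operatorname{Re}g<0$ on the disc, $f=\ii g$ maps into the \emph{lower} half-plane, not the upper one. This is consistent with $f(0)=-\ii$.
\item For $\lambda<0$ your displayed identity is false as written: the mean value property gives $e^{\ii\lambda h(0)}=e^{-\lambda}=e^{|\lambda|}$. Applied to $e^{-\ii\lambda h}$ it gives $e^{\lambda}$, after which you conjugate using that $S$ is real. Equivalently, it suffices to treat $\lambda>0$.
\end{itemize}
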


\begin{proof}
Conditioning reduces the question to deterministic angles and weights. Discard zero weights and group equal angles, retaining $n$ for their number. After translation modulo $\pi$, assume $0=\theta_1<\cdots<\theta_n<\pi$. Put $\theta_{n+1}=\pi$ and
\[
h(\theta)=\sum_{j=1}^n w_j\cot(\theta-\theta_j).
\]
On each interval $(\theta_j,\theta_{j+1})$, the function $h$ decreases strictly from $+\infty$ to $-\infty$. For $q\in\R$, let $\theta'_j$ be its unique solution of $h(\theta'_j)=q$ in that interval. With $z=e^{2\ii\theta}$, this equation becomes
\[
\sum_{j=1}^n w_j(z+e^{2\ii\theta_j})\prod_{k\ne j}(z-e^{2\ii\theta_k})
+\ii q\prod_{j=1}^n(z-e^{2\ii\theta_j})=0.
\]
The leading coefficient is $1+\ii q$ and the constant coefficient is
$(-1)^n e^{2\ii\sum_j\theta_j}(\ii q-1)$. The $\theta'_j$ lie in disjoint subintervals of one period, so $e^{2\ii\theta'_j}$ are all its $n$ distinct roots. Their product gives
\[
e^{2\ii\sum_j(\theta'_j-\theta_j)}=\frac{\ii q-1}{\ii q+1}.
\]
The sum $\sum_j(\theta'_j-\theta_j)$ belongs to $(0,\pi)$, since each summand lies between zero and the corresponding gap. It therefore equals $\operatorname{arccot}(q)$; throughout, this inverse cotangent takes values in $(0,\pi)$. Since $\{h>q\}$ is, up to its endpoints, the union of the intervals $(\theta_j,\theta'_j)$,
\[
\P(h(\Theta)>q)=\frac{\operatorname{arccot}(q)}{\pi}.
\]
Differentiation gives the density $1/[\pi(1+q^2)]$. The conditional law does not depend on the configuration, proving the final assertion.
\end{proof}

In the following result, positive weighted point measures are understood with the locally finite support convention of the Introduction.

\begin{theorem}[Periodic measures]\label{thm:periodic}
Let $M$ be a nonzero positive weighted point measure which is $B$-periodic, where $B>0$, and set $K=B^{-1}M([0,B))$. For every $s\notin\supp M$, the symmetric principal value
\begin{equation}\label{eq:periodic-pv}
F_M(s)=\lim_{A\to\infty}\int_{[-A,A]}\frac{\mathrm dM(x)}{s-x}
\end{equation}
exists and is $B$-periodic. If $M$ is random and $S$ is uniform on a period and independent of $M$, then, conditionally on $M$,
\[
\frac{F_M(S)}{\pi K}\sim\Cauchy(0,1).
\]
\end{theorem}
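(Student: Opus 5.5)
Since $M$ is $B$-periodic with locally finite support, one period $[0,B)$ carries finitely many atoms $x_1,\dots,x_n$ with weights $w_1,\dots,w_n>0$. Their total weight is $\sum_j w_j = M([0,B)) = KB$, and then
\[
M=\sum_{j=1}^n w_j\sum_{k\in\Z}\delta_{x_j+kB}.
\]
The plan is to reduce $F_M$ to a weighted cotangent sum and then apply Proposition~\ref{prop:cotangent} with angles $\theta_j=\pi x_j/B$ and normalized weights $w_j/(KB)$.

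The first step is existence of the principal value. For each atom class $j$ and $s\ne x_j+kB$, the classical Euler expansion gives
\[
\lim_{N\to\infty}\sum_{|k|\le N}\frac1{s-x_j-kB}=\frac{\pi}{B}\cot\Bigl(\frac{\pi(s-x_j)}{B}\Bigr).
\]
The truncation in \eqref{eq:periodic-pv} is by $x\in[-A,A]$, not by $|k|\le N$, so I need to compare the two. For fixed $j$, the atoms $x_j+kB$ lying in $[-A,A]$ are the indices $k$ in an interval $[k_-(A),k_+(A)]$ with $k_+\approx (A-x_j)/B$ and $k_-\approx(-A-x_j)/B$. This range differs from a symmetric range $|k|\le N$ only by $O(1)$ terms at each end. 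Each such boundary term has size $O(1/A)$, and the symmetric sums converge because the paired terms $k$ and $-k$ combine to $O(k^{-2})$. Hence the limit as $A\to\infty$ exists and equals the same cotangent. Summing over the finitely many $j$ gives
\[
F_M(s)=\frac{\pi}{B}\sum_{j=1}^n w_j\cot\Bigl(\frac{\pi(s-x_j)}B\Bigr),
\]
and this expression is visibly $B$-periodic. This asymmetric-endpoint bookkeeping is the only mildly delicate point, and it is routine.

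Next comes the law. Let $S$ be uniform on $[0,B)$ and independent of $M$, and put $\Theta=\pi S/B$, which is uniform on $[0,\pi)$. Then
\[
\frac{F_M(S)}{\pi K}=\sum_{j=1}^n\frac{w_j}{KB}\cot(\Theta-\theta_j),\qquad \theta_j=\frac{\pi x_j}{B}\in[0,\pi).
\]
The normalized weights $w_j/(KB)$ are nonnegative and sum to $1$. Proposition~\ref{prop:cotangent}, applied conditionally on the configuration of $M$, then gives $\Cauchy(0,1)$ conditionally on $M$.

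Two points need care. The event $S\in\supp M$ has probability zero, so $F_M(S)$ is almost surely defined. For random $M$, the $x_j$, $w_j$ and $n$ must be measurable functionals of $M$. This can be arranged by ordering the atoms in $[0,B)$. Alternatively, one can condition on $M$ directly, since the conditional statement only uses a fixed realization and the independence of $S$.
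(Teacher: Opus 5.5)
Your proof is correct and follows the paper's own argument essentially step for step. You decompose one period into finitely many weighted atom classes and invoke the Euler partial-fraction expansion of the cotangent. You note that replacing the symmetric index cutoff by the spatial cutoff $[-A,A]$ changes only boundedly many endpoint terms, each of size $O(1/A)$. You then apply Proposition~\ref{prop:cotangent} conditionally on $M$, with angles $\pi x_j/B$, normalized weights $w_j/(KB)$ and $\Theta=\pi S/B$; since the cotangent sum is $\pi$-periodic in $\Theta$, taking $S$ uniform on $[0,B)$ rather than on an arbitrary period loses nothing.
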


\begin{proof}
Represent the finitely many atoms in $[0,B)$ by $B\theta_j/\pi$ and write
\[
M=KB\sum_{j=1}^n w_j\sum_{m\in\Z}\delta_{B(m+\theta_j/\pi)},
\qquad \sum_{j=1}^n w_j=1,\quad \theta_j\in[0,\pi).
\]
The classical partial-fraction expansion
\[
\lim_{N\to\infty}\sum_{m=-N}^N\frac1{z-m}=\pi\cot(\pi z)
\]
converges locally uniformly away from the integers. Replacing the symmetric index cutoff by the condition $|B(m+\theta_j/\pi)|\le A$ changes only a bounded number of endpoint terms, each tending to zero. Consequently
\begin{equation}\label{eq:periodic-cotangent}
\frac{F_M(s)}{\pi K}=\sum_{j=1}^n w_j\cot\left(\frac{\pi s}{B}-\theta_j\right).
\end{equation}
This proves existence and periodicity. Proposition~\ref{prop:cotangent}, conditionally on $M$, proves the distributional statement.
\end{proof}

\begin{corollary}[Polynomials with roots on the unit circle]\label{cor:circle}
Let $P_n$ be a possibly random polynomial of degree $n\ge1$ whose roots lie on the unit circle. If $\Theta$ is uniform on $[0,2\pi)$ and independent of $P_n$, then, conditionally on $P_n$,
\begin{equation}\label{eq:circle-logderivative}
\frac{2\ii}{n}e^{\ii\Theta}\frac{P'_n(e^{\ii\Theta})}{P_n(e^{\ii\Theta})}-\ii
\sim\Cauchy(0,1).
\end{equation}
\end{corollary}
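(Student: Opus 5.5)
The plan is to reduce the corollary to Proposition~\ref{prop:cotangent} through an explicit partial-fraction computation. Conditioning on $P_n$, the roots and their multiplicities become deterministic. Write the roots as $e^{\ii\phi_k}$ with $\phi_k\in[0,2\pi)$ and multiplicities $m_k\ge1$, $\sum_k m_k=n$. Then
\[
z\frac{P_n'(z)}{P_n(z)}=\sum_k m_k\frac{z}{z-e^{\ii\phi_k}} .
\]

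The key elementary step is the half-angle identity. For $z=e^{\ii\Theta}$ with $\Theta\ne\phi_k$,
\[
\frac{e^{\ii\Theta}}{e^{\ii\Theta}-e^{\ii\phi}}=\frac{e^{\ii(\Theta-\phi)/2}}{e^{\ii(\Theta-\phi)/2}-e^{-\ii(\Theta-\phi)/2}}=\frac12-\frac{\ii}{2}\cot\frac{\Theta-\phi}{2}.
\]
I will verify this by writing the fraction as $e^{\ii x}/(2\ii\sin x)$ with $x=(\Theta-\phi)/2$. It gives $(\cos x+\ii\sin x)/(2\ii\sin x)=\tfrac12-\tfrac{\ii}{2}\cot x$.

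Summing over the roots with weights $w_k=m_k/n$, we get
\[
\frac{2\ii}{n}e^{\ii\Theta}\frac{P_n'(e^{\ii\Theta})}{P_n(e^{\ii\Theta})}=\ii\sum_k w_k+\sum_k w_k\cot\Bigl(\frac{\Theta}{2}-\frac{\phi_k}{2}\Bigr)=\ii+\sum_k w_k\cot\Bigl(\frac{\Theta}{2}-\frac{\phi_k}{2}\Bigr).
\]
Subtracting $\ii$ leaves exactly the weighted cotangent sum $\sum_k w_k\cot(\Theta'-\theta_k)$, with $\Theta'=\Theta/2$ and $\theta_k=\phi_k/2\in[0,\pi)$.

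Since $\Theta$ is uniform on $[0,2\pi)$ and independent of $P_n$, the variable $\Theta'$ is uniform on $[0,\pi)$ and independent of $(\theta_k,w_k)$. The weights are nonnegative and sum to one. Proposition~\ref{prop:cotangent} then gives the conditional $\Cauchy(0,1)$ law. The finitely many values $\Theta=\phi_k$, where the expression is undefined, form a null set and can be ignored.

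I expect no real obstacle. The only care needed is the sign and factor bookkeeping in the half-angle identity, together with checking that the real constant $\tfrac12$ becomes exactly the $\ii$ that is subtracted. Alternatively, one could apply Theorem~\ref{thm:periodic} to the $2\pi$-periodic lift of the angle measure, but the direct route above avoids principal values altogether.
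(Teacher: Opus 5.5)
Your proposal is correct and follows the paper's proof: you use the same half-angle identity to rewrite the normalized logarithmic derivative as $\frac1n\sum_j\cot\bigl((\Theta-\theta_j)/2\bigr)$. You then apply Proposition~\ref{prop:cotangent} to the halved angles modulo $\pi$, with $\Theta/2$ uniform on $[0,\pi)$. Grouping the roots by multiplicity into weights $m_k/n$ rather than listing them with repetition is only a cosmetic difference.
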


\begin{proof}
Writing the roots, with multiplicity, as $e^{\ii\theta_j}$ gives
\[
\frac{2\ii}{n}e^{\ii\Theta}\frac{P'_n(e^{\ii\Theta})}{P_n(e^{\ii\Theta})}-\ii
=\frac1n\sum_{j=1}^n\cot\left(\frac{\Theta-\theta_j}{2}\right).
\]
Apply Proposition~\ref{prop:cotangent} to the angles $\theta_j/2$ modulo $\pi$.
\end{proof}

\begin{remark}\label{rem:finite-invariance}
If the weighted configuration in Proposition~\ref{prop:cotangent} is invariant in law under common translation of its angles modulo $\pi$, the unconditional Cauchy law still holds for a deterministic observation angle. The same unconditional conclusion holds in Theorem~\ref{thm:periodic} for a stationary periodic measure, and in Corollary~\ref{cor:circle} for a rotationally invariant root configuration. Examples include independent uniform roots and the circular $\beta$-ensemble, for every $\beta>0$.
\end{remark}

\section{Stationary measures and random periodization}\label{sec:stationary}

Retain the signed counting function $N_M$ of the Introduction; thus $\mathrm dN_M=M$, including at zero. Write $\theta_tM(B)=M(B+t)$. A random measure is stationary if $\theta_tM$ has the law of $M$ for every $t\in\R$. We use the canonical space of point measures with its vague Borel sigma-algebra. Let $\I$ be its sigma-field of events invariant under every real shift, and let $\mathcal J$ denote the corresponding field for the unit shift. We work with a random stationary positive point measure with finite intensity
\[
K=\E M((0,1])\in(0,\infty).
\]
Its mean measure is $K\,\mathrm dx$. Hence every deterministic point is almost surely not an atom and, by the locally finite support convention, lies outside the support. Values of $F_M$ at atoms may be assigned arbitrarily whenever spatial integrals are taken.

A measurable real function $f$ is called \emph{shift amenable} if its empirical measures
\[
\frac1L\int_{-L/2}^{L/2}\delta_{f(s)}\,\mathrm ds
\]
converge weakly to a probability measure as $L\to\infty$.

\begin{theorem}[An integrable first-moment criterion]\label{thm:stationary}
Let $M$ be a stationary positive weighted point measure of intensity $K\in(0,\infty)$. Suppose there is a finite nonnegative measurable function $h$ on $[1,\infty)$ such that
\begin{equation}\label{eq:general-discrepancy}
\E|N_M(x)-Kx|\le h(|x|)\quad (|x|\ge1),\qquad
\int_1^\infty\frac{h(r)}{r^2}\,\mathrm dr<\infty.
\end{equation}
Then the following conclusions hold.
\begin{enumerate}[label=(\roman*)]
\item Almost surely, the symmetric principal values
\begin{equation}\label{eq:stationary-pv}
F_M(s)=\lim_{A\to\infty}\int_{[-A,A]}\frac{\mathrm dM(x)}{s-x}
\end{equation}
exist simultaneously for every $s\notin\supp M$. The same limit defines a holomorphic function off the real axis, with locally uniform convergence there, and $G_M=-F_M$ is Herglotz--Pick on the upper half-plane.
\item On one event of probability one, for every $t\in\R$,
\begin{equation}\label{eq:shift-covariance}
F_{\theta_tM}(z)=F_M(z+t)
\end{equation}
off the real axis and at every real point where either side is defined. In particular, $F_M$ is a stationary random function.
\item Almost surely,
\begin{equation}\label{eq:spatial-cauchy}
\frac1L\int_{-L/2}^{L/2}\delta_{F_M(s)/(\pi K)}\,\mathrm ds
\ \Longrightarrow\ \Cauchy(0,1).
\end{equation}
Thus almost every realization is shift amenable, with a nonrandom spatial law.
\item For every deterministic $s$,
\begin{equation}\label{eq:conditional-cauchy}
\Law\left(\left.\frac{F_M(s)}{\pi K}\,\right|\I\right)=\Cauchy(0,1)
\quad\hbox{almost surely}.
\end{equation}
The normalized transform is therefore independent of the translation-invariant information carried by $M$.
\end{enumerate}
\end{theorem}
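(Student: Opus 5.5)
We will prove (i)--(ii) by a pathwise integration by parts, and (iii)--(iv) by comparing $M$ on a window $[-B/2,B/2)$ with its $B$-periodization and applying Theorem~\ref{thm:periodic}.

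\textbf{Existence and covariance.} For (i), integration by parts with the signed counting function gives
\[
\int_{[-A,A]}\frac{\mathrm dM(x)}{z-x}=\Bigl[\frac{N_M(x)}{z-x}\Bigr]_{-A^-}^{A}-\int_{-A}^{A}\frac{N_M(x)}{(z-x)^2}\,\mathrm dx .
\]
Replace $N_M$ by $D(x)=N_M(x)-Kx$. The linear part contributes the symmetric integral $K\int_{-A}^A\frac{\mathrm dx}{z-x}$, whose principal value converges explicitly: it tends to $-\ii\pi K\operatorname{sgn}(\operatorname{Im} z)$ off the axis. By \eqref{eq:general-discrepancy} and Fubini, $\E\int_{|x|\ge1}|D(x)|x^{-2}\,\mathrm dx<\infty$, so almost surely $\int|D(x)|(1+x^2)^{-1}\,\mathrm dx<\infty$. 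This gives absolute and locally uniform convergence of $\int D(x)(z-x)^{-2}\,\mathrm dx$ for $z$ off the real axis, and also for real $s$ away from the support near $s$.

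The boundary terms $D(\pm A)/A$ need separate care. I plan to show $D(A)/A\to0$ almost surely along all $A$ using two facts: integrability of $|D|/x^2$, and monotonicity of $N_M$, which bounds oscillation between $A$ and $A(1+\delta)$ by $\delta$ times the slope. This is a standard ``integrable plus monotone implies $o(A)$'' argument, and it is the first delicate point. Holomorphy then follows from local uniformity. The Herglotz property follows since each truncation $-\int(z-x)^{-1}\,\mathrm dM$ has positive imaginary part on the upper half-plane.

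For (ii), shifting the window by $t$ changes the truncation only near $\pm A$. By the $o(A)$ bound on $M([A,A+|t|])$, this change is $O(M([A-|t|,A+|t|])/A)\to0$. This holds simultaneously in $t$ on the single event where $D(A)/A\to0$.

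\textbf{Periodization.} For (iii)--(iv), let $M_B$ denote the $B$-periodization of $M|_{[-B/2,B/2)}$, with intensity $K_B=M([-B/2,B/2))/B$. By Theorem~\ref{thm:periodic}, $F_{M_B}(s)/(\pi K_B)$ is exactly Cauchy for $s$ uniform on $[-B/2,B/2)$. Since $K_B\to K_\infty:=\E[M((0,1])\mid\I]$ almost surely by the ergodic theorem, it suffices to show
\[
\frac1B\int_{-\eta B}^{\eta B}\min\bigl(1,|F_M(s)-F_{M_B}(s)|\bigr)\,\mathrm ds\to0
\]
for each $\eta<1/2$, with the edge fraction $1-2\eta$ made small. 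Both transforms equal the integral of $(s-x)^{-2}$ against their discrepancy, plus boundary terms. The difference is controlled by $\int_{|x-s|\ge(1/2-\eta)B}|D(x)|(x-s)^{-2}\,\mathrm dx$ together with the analogous tail for the periodic discrepancy, which is bounded by $\sup_{|x|\le B}|D(x)|/B$. Taking expectations and using \eqref{eq:general-discrepancy} should give convergence in $L^1$, hence in probability, and almost surely along a subsequence of $B$.

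Normalizing by $K$ rather than $K_\infty$ requires $K_\infty=K$. Since the limiting spatial law must be $\Cauchy$ with scale $\pi K_\infty$, the identification needs the discrepancy criterion: $\E|D(x)|=o(x)$ forces $\E|K_\infty-K|=0$ via $D(x)/x\to K_\infty-K$ in $L^1$. Full almost sure convergence in $L$ in \eqref{eq:spatial-cauchy} would come from the ergodic theorem applied to bounded test functions of the stationary process $F_M$, whose limit is then identified on a subsequence. Part (iv) then follows: for bounded $g$ and $\I$-measurable $Y$, stationarity from (ii) gives
\[
\E[g(F_M(0)/\pi K)Y]=\E\Bigl[\frac1L\int g(F_M(s)/\pi K)\,\mathrm ds\cdot Y\Bigr]\to\E[g(X)]\,\E Y .
\]

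\textbf{Main obstacle.} The main obstacle is the uniform pathwise control of the boundary terms $D(\pm A)/A$ and of the periodization error. The hypothesis gives only first moments, not almost sure growth, so the monotonicity trick and subsequence arguments have to be set up carefully.
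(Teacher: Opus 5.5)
Your outline has the same architecture as the paper: integration by parts against the discrepancy for (i)--(ii), comparison with the periodization and Theorem~\ref{thm:periodic} for (iii), and averaging against an invariant $Y$ for (iv). Your ``integrable plus monotone'' argument for $D(A)=o(A)$ is correct. If $|D(A)|\ge\varepsilon A$, monotonicity of $N_M$ keeps $|D|\ge\varepsilon A/2$ on an adjacent interval of length $\varepsilon A/(2K)$. That interval contributes an amount bounded below in terms of $\varepsilon$ and $K$ alone to the tail of $\int|D(x)|x^{-2}\dd x$, which is impossible for large $A$.

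This is more direct than Lemma~\ref{lem:integrable-discrepancy}, where Birkhoff's theorem first produces a limit of $M((0,n])/n$ and integrability then forces that limit to equal $K$. Your argument also gives $K_B\to K$ pathwise, so your ergodic identification of $K_\infty$ is redundant. For the same reason, the expectation/subsequence/ergodic-theorem detour in (iii) buys nothing. Once \eqref{eq:pathwise-discrepancy} holds on one event, every bound in the comparison is deterministic and uniform in $|s|\le\eta B$ for every $B$. Hence \eqref{eq:spatial-cauchy} holds along all $L$ for each good realization, which is how Proposition~\ref{prop:periodization} proceeds. You do need, and do not mention, joint measurability of $(s,M)\mapsto F_M(s)$ for the Fubini step in (iv).

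The step that fails as written is the periodization estimate. Because $K_B\neq K$, the signed counting function of $M-M_B$ is $H_B(x)=c_Bx+D(x)-D_B(x)$, with $c_B=K-K_B$. It vanishes inside the central period, but outside it carries the drift $c_Bx$, which your bound by the tails of $D$ and $D_B$ leaves out. That contribution, $|c_B|\int_{|x|\ge B/2}|x|(x-s)^{-2}\dd x$, is infinite unless the $+x$ and $-x$ tails are paired before integrating, as in \eqref{eq:paired-periodization}. After pairing it is at most $4|s||c_B|/(B/2-|s|)=O_\eta(|c_B|)\to0$.

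Writing the difference through $H_B$, rather than through the two discrepancies separately, also fixes a problem that already appears in (i) at real points. Near a nonatom $s$ one has $D(x)=D(s)+K(s-x)$, so $D(x)(s-x)^{-2}$ is not absolutely integrable at $x=s$. Your claim of absolute convergence ``for real $s$ away from the support near $s$'' is therefore false as stated. For existence at real $s$, subtract the transform at $\ii$, as in Lemma~\ref{lem:construction}: the kernel difference is bounded near $s$ and $O(x^{-2})$ at infinity. For the comparison, $H_B\equiv0$ near $s$ removes the singularity.
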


 The connection between the integrable condition \eqref{eq:general-discrepancy} and a polynomial discrepancy bound will be useful for quantitative estimates. For the construction and periodization, the following pathwise conditions are sufficient:
\begin{equation}\label{eq:pathwise-discrepancy}
D(x):=N_M(x)-Kx=o(|x|),\qquad
\int_\R\frac{|D(x)|}{1+x^2}\,\mathrm dx<\infty.
\end{equation}

\begin{lemma}\label{lem:integrable-discrepancy}
Under \eqref{eq:general-discrepancy}, the conditions \eqref{eq:pathwise-discrepancy} hold almost surely. Moreover, $g(r):=\E|N_M(r)-Kr|$ satisfies $g(r)=o(r)$ as $r\to\infty$.
\end{lemma}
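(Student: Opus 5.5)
The plan has three parts: the integrability condition in \eqref{eq:pathwise-discrepancy}, the sublinear growth of the deterministic function $g$, and then the pathwise bound $D(x)=o(|x|)$. The first comes from Tonelli's theorem, and the third from combining the second with monotonicity of $N_M$.

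\emph{Integrability.} First, $D$ is measurable in $(x,\omega)$, since $N_M$ is monotone and right-continuous in $x$. Split $\R$ into $[-1,1]$ and $\{|x|\ge1\}$.
\begin{itemize}
\item On $[-1,1]$ we have $|D(x)|\le M([-1,1])+K$. This is almost surely finite because $M$ is locally finite.
\item On $\{|x|\ge1\}$, Tonelli's theorem and \eqref{eq:general-discrepancy} give
\[
\E\int_{|x|\ge1}\frac{|D(x)|}{1+x^2}\dd x\le 2\int_1^\infty\frac{h(r)}{r^2}\dd r<\infty,
\]
so the integral is almost surely finite.
\end{itemize}

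\emph{Sublinearity of $g$.} Stationarity gives subadditivity. Indeed, $N_M(r+s)-K(r+s)$ is the sum of $N_M(r)-Kr$ and a shifted copy of $N_M(s)-Ks$, so $g(r+s)\le g(r)+g(s)$. Also $g(r)\le 2Kr$. Fekete's lemma then shows that $g(r)/r\to c:=\inf_r g(r)/r$. The integrability of $h/r^2$ forces $c=0$: if $c>0$, then $g(r)\ge cr$ for all $r$, hence $h(r)\ge cr$ on $[1,\infty)$, and $\int_1^\infty c\,r^{-1}\dd r=\infty$. The function $g$ is also measurable and locally bounded, which is what Fekete's lemma needs in continuous parameter; alternatively one can work along integers and use monotonicity of $N_M$.

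\emph{Pathwise $D(x)=o(|x|)$: the main obstacle.} The bound $g(r)=o(r)$ is only an $L^1$ statement, so I turn it into an almost sure statement with Birkhoff's ergodic theorem, which needs no ergodicity. Set $Y_k=M((k,k+1])$. This sequence is stationary under the unit shift and integrable with mean $K$. Birkhoff gives, almost surely,
\[
\frac{N_M(n)}{n}\to \E[Y_0\mid\mathcal J]=:\bar K .
\]
The same holds for negative $n$, with the same limit. Monotonicity of $N_M$ then transfers this from integers to all real $x$. It remains to show $\bar K=K$ almost surely. Birkhoff's convergence holds in $L^1$, so $\E|\bar K-K|=\lim_n g(n)/n=0$ by the previous step. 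Hence $N_M(x)/x\to K$, that is, $D(x)=o(|x|)$ almost surely.

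As a possible alternative, one could avoid Birkhoff by combining $\int_\R |D(x)|/(1+x^2)\dd x<\infty$ with the one-sided Lipschitz property $D(y)-D(x)\ge -K(y-x)$ for $y>x$. A linear-size value of $|D|$ at $x$ would persist on an interval of length $\asymp|x|$ and contribute a fixed positive amount to the integral. However, finiteness of the integral alone does not rule out infinitely many such excursions, since the contributions need not accumulate. So I will rely on the Birkhoff argument.
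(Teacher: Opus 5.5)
Your proof is correct, but it runs in the opposite order from the paper's.

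\textbf{The paper's route.} The paper first applies Birkhoff's theorem pathwise to get $N_M(x)/x\to\E[M((0,1])\mid\mathcal J]$. It identifies this limit with $K$ by observing that any other value would make $\int|D(x)|/(1+x^2)\dd x$ diverge logarithmically, so the Tonelli step does double duty. Only afterwards does it deduce $g(r)=o(r)$ from the almost-sure convergence, via $g(r)/r=2\E(K-N_M(r)/r)_+$ and dominated convergence, the positive part being bounded by $K$.

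\textbf{Your route.} You obtain $g(r)=o(r)$ first, by a deterministic argument. Stationarity makes $g$ subadditive. The bound $g(r)\le 2Kr$ gives the local boundedness that Fekete's lemma needs in continuous parameter. Finally, $g\le h$ on $[1,\infty)$ together with $\int_1^\infty h(r)r^{-2}\dd r<\infty$ forces $\inf_r g(r)/r=0$. You then use the $L^1$ convergence in Birkhoff's theorem together with $g(n)/n\to0$ to get $\bar K=K$.

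\textbf{Comparison.} The paper's version is shorter. Yours shows that sublinearity of $g$ follows from stationarity and the moment hypothesis alone, without any ergodic theorem.

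\textbf{A correction to your closing remark.} The alternative you discarded does work, and the contributions do accumulate. Fix $0<\epsilon\le K$ and put $\delta=\epsilon/(2K)\le 1/2$; this case suffices. By the one-sided Lipschitz bound:
\begin{itemize}
\item if $D(x)\ge\epsilon x$ for large $x>0$, then $D\ge\epsilon x/2$ on $[x,(1+\delta)x]$;
\item if $D(x)\le-\epsilon x$, then $D\le-\epsilon x/2$ on $[(1-\delta)x,x]$.
\end{itemize}
Either way, the contribution of that interval to $\int|D(y)|/(1+y^2)\dd y$ is at least a fixed constant $c(\epsilon,K)>0$ for all large $x$. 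Along a sufficiently sparse subsequence of such points the intervals are disjoint. So infinitely many excursions would make the integral diverge.

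Hence, for a positive measure, the integrability condition in \eqref{eq:pathwise-discrepancy} implies $D(x)=o(|x|)$ deterministically; the negative half-line is symmetric. Combined with your Fekete argument for $g$, this would prove the lemma with no ergodic theorem at all.
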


\begin{proof}
For $|x|\le1$, stationarity gives $\E|N_M(x)-Kx|\le2K|x|$. Tonelli's theorem and \eqref{eq:general-discrepancy} therefore prove integrability of $|D(x)|/(1+x^2)$ almost surely.

For positive integers $n$, Birkhoff's theorem applied to the unit-interval masses gives
\[
\frac{M((0,n])}{n}\longrightarrow \E[M((0,1])\mid\mathcal J]
\quad\hbox{almost surely}.
\]
Monotonicity extends this limit to arbitrary real $x\to\infty$. If it differed from $K$, the integral of $|D(x)|/(1+x^2)$ would diverge logarithmically. It therefore equals $K$ almost surely. Apply the same argument to the inverse translation for the negative half-line. Finally, $N_M(r)/r\ge0$ has mean $K$, so
\[
\frac{g(r)}r=2\E\left(K-\frac{N_M(r)}r\right)_+\longrightarrow0
\]
by dominated convergence. Stationarity also gives $\E|N_M(-r)+Kr|=g(r)$.
\end{proof}

\begin{lemma}[Construction and imaginary infinity]\label{lem:construction}
Let $M$ be a positive weighted point measure satisfying \eqref{eq:pathwise-discrepancy} for a constant $K>0$. Then $\int_\R(1+x^2)^{-1}\,\mathrm dM(x)<\infty$, and the principal values in \eqref{eq:stationary-pv} exist simultaneously off the support, with locally uniform convergence off the real axis. Moreover,
\begin{equation}\label{eq:imaginary-infinity}
G_M(\ii\eta)\longrightarrow \ii\pi K\qquad(\eta\to\infty).
\end{equation}
\end{lemma}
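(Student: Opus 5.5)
The whole argument rests on integrating by parts against $D$. Write $\mathrm dM=K\,\mathrm dx+\mathrm dD$ on $\R$, with $D(x)=N_M(x)-Kx$, so that $D$ is right-continuous, of locally bounded variation and $D(0)=0$.

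\emph{Integrability.} For $A>0$, integration by parts gives
\[
\int_{(0,A]}\frac{\mathrm dM(x)}{1+x^2}=\frac{N_M(A)}{1+A^2}+\int_0^A\frac{2xN_M(x)}{(1+x^2)^2}\,\mathrm dx .
\]
Since $N_M(x)=Kx+D(x)$ and $2|x|/(1+x^2)\le 1$, the integrand is bounded by $2Kx/(1+x^2)^2+|D(x)|/(1+x^2)$, which is integrable by \eqref{eq:pathwise-discrepancy}. The boundary term tends to $0$ because $D=o(|x|)$. The negative half-line is handled in the same way, so $\int_\R(1+x^2)^{-1}\,\mathrm dM(x)<\infty$.

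\emph{Principal values.} Fix $z\notin\supp M$. Split the truncated integral as
\[
\int_{[-A,A]}\frac{\mathrm dM(x)}{z-x}=K\int_{-A}^{A}\frac{\mathrm dx}{z-x}+\int_{[-A,A]}\frac{\mathrm dD(x)}{z-x}.
\]

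For the Lebesgue part, $K\log\frac{z+A}{z-A}$ (principal branch, taken along a path avoiding the pole if $z$ is real) tends to $-\ii\pi K\operatorname{sgn}(\operatorname{Im} z)$ for nonreal $z$ and to $0$ for real $z$. The convergence is locally uniform in $z$, because
\[
\frac{1}{z-x}+\frac{1}{z+x}=\frac{2z}{z^2-x^2}=O(x^{-2}).
\]

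For the $D$ part, take $z$ in a compact set $Q$ and $R$ large enough that the $R$-neighbourhood contains $Q$. On $[-R,R]$ the integral is a fixed finite quantity, holomorphic off $\R$ and continuous in real $z$ away from the atoms. On $R<|x|\le A$, integrate by parts:
\[
\int\frac{\mathrm dD(x)}{z-x}=\Bigl[\frac{D(x)}{z-x}\Bigr]-\int\frac{D(x)}{(z-x)^2}\,\mathrm dx .
\]
The boundary terms at $\pm A$ are $o(1)$ because $D=o(|x|)$. The integral converges absolutely and uniformly for $z\in Q$, since
\[
|z-x|^{-2}\le C_Q(1+x^2)^{-1}\quad\text{for }|x|>R,
\]
and $|D|/(1+x^2)$ is integrable.

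This gives existence simultaneously for all $z$ off the support: for real $s\notin\supp M$, work on a small interval around $s$ that avoids the closed support. It also gives locally uniform convergence off $\R$, and hence holomorphy there.

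\emph{The limit at imaginary infinity.} The same decomposition gives
\[
G_M(\ii\eta)=\ii\pi K-\lim_{A\to\infty}\int_{[-A,A]}\frac{\mathrm dD(x)}{\ii\eta-x}.
\]
Integrating by parts over all of $\R$ (the boundary terms vanish as shown above) turns the remaining term into
\[
\int_\R\frac{D(x)}{(\ii\eta-x)^2}\,\mathrm dx .
\]
Its modulus is at most $\int_\R|D(x)|/(\eta^2+x^2)\,\mathrm dx$. For $\eta\ge1$ the integrand is dominated by $|D(x)|/(1+x^2)$, which is integrable, and it tends to $0$ pointwise. Dominated convergence therefore gives \eqref{eq:imaginary-infinity}.

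\emph{Main obstacle.} The delicate step is the bookkeeping at real points $s$. One has to justify the integration by parts when $\mathrm dD$ has atoms near the truncation endpoints $\pm A$; this is fine because $D$ is right-continuous and the boundary term is controlled by $D(A)=o(A)$. One also has to confirm that the constant from the Lebesgue part vanishes at real $s$, where the $\log$ has opposite-sign imaginary contributions that cancel.
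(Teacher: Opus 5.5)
Your proposal is correct and follows essentially the paper's route: integration by parts against $D$ with domination by $|D(x)|/(1+x^2)$, the reference transform $\mp\ii\pi K$, and dominated convergence in $G_M(\ii\eta)=\ii\pi K+\int_\R D(x)(x-\ii\eta)^{-2}\,\mathrm dx$. The paper differs only in two small respects: it gets integrability from $M([-r,r])=O(1+r)$ summed over dyadic annuli, and it handles real $s$ by subtracting the transform at $\ii$, since $(s-x)^{-1}-(\ii-x)^{-1}=O_s(x^{-2})$ is absolutely $M$-integrable. Two minor corrections to your write-up. First, the bound in your integrability step should read $2Kx^2/(1+x^2)^2$, not $2Kx/(1+x^2)^2$; it is still integrable. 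Second, at real $s$ you should split $\mathrm dM=K\,\mathrm dx+\mathrm dD$ only on the tail $R<|x|\le A$, which is what your inner/outer decomposition effectively does. On all of $[-A,A]$ both pieces are singular at $x=s$, but on the tail they are not, so the ``cancelling imaginary contributions'' you flag as the main obstacle never arise.
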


\begin{proof}
The relation $N_M(x)=Kx+o(|x|)$ implies $M([-r,r])=O(1+r)$; summing over dyadic annuli gives the asserted weighted mass bound. It also implies $M(\{x\})=o(|x|)$ as $|x|\to\infty$, by bounding an atom by the mass of a neighboring interval of length two.

For cutoffs $A$ which are not atoms, Stieltjes integration by parts gives, off the real axis,
\begin{align*}
\int_{[-A,A]}\frac{\mathrm dM(x)-K\,\mathrm dx}{z-x}
&=\frac{D(A)}{z-A}-\frac{D(-A)}{z+A}
-\int_{-A}^A\frac{D(x)}{(z-x)^2}\,\mathrm dx.
\end{align*}
The boundary terms tend to zero. On a compact subset of $\C\setminus\R$, the integrand is dominated by a constant times $|D(x)|/(1+x^2)$. The reference transform tends to $-\ii\pi K$ on the upper half-plane, and to $\ii\pi K$ on the lower half-plane. This proves locally uniform convergence. The negligible endpoint-atom bound extends the conclusion to arbitrary real cutoffs. Each truncated transform has nonpositive imaginary part on the upper half-plane, so $G_M=-F_M$ is Herglotz--Pick.

For a real $s\notin\supp M$, the difference of the kernels $(s-x)^{-1}$ and $(\ii-x)^{-1}$ is $O_s(x^{-2})$ at infinity, and is integrable on every compact set because $s$ has a neighborhood without atoms. It is therefore absolutely integrable against $M$. Subtracting the transform at $\ii$ proves existence at $s$. For $|s|\le B$ and $|x|\ge2B+2$, the tail bound on this kernel difference is uniform in $s$, so the construction applies simultaneously at all nonatoms.

Finally, the same integration by parts gives
\[
G_M(\ii\eta)=\ii\pi K+\int_\R\frac{D(x)}{(x-\ii\eta)^2}\,\mathrm dx.
\]
For $\eta\ge1$, the absolute value of the integrand is bounded by $|D(x)|/(1+x^2)$ and tends pointwise to zero. Dominated convergence proves \eqref{eq:imaginary-infinity}.
\end{proof}

\begin{lemma}[Real boundary values]\label{lem:boundary}
Under \eqref{eq:pathwise-discrepancy}, every $s\notin\supp M$ satisfies
\begin{equation}\label{eq:boundary-square}
\int_\R\frac{\mathrm dM(x)}{(s-x)^2}<\infty,
\qquad
|F_M(s+\ii\eta)-F_M(s)|
\le\eta\int_\R\frac{\mathrm dM(x)}{(s-x)^2}\quad(\eta>0).
\end{equation}
In particular, the principal value is the vertical boundary value. The transform extends meromorphically across the real axis, with poles only at the atoms, and
\begin{equation}\label{eq:derivative-transform}
G'_M(s)=\int_\R\frac{\mathrm dM(x)}{(s-x)^2}.
\end{equation}
\end{lemma}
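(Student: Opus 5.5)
Let $s\notin\supp M$ be real. By the locally finite support convention, there is $\delta>0$ with $M((s-\delta,s+\delta))=0$. Lemma~\ref{lem:construction} gives $\int_\R(1+x^2)^{-1}\dd M(x)<\infty$. On $|x-s|\ge\delta$ the function $(s-x)^{-2}$ is bounded by a constant $C_{s,\delta}(1+x^2)^{-1}$, so the first bound in \eqref{eq:boundary-square} holds.

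For the Lipschitz estimate, I would compare truncations. For cutoffs $A$ the kernel difference is
\[
\frac1{s+\ii\eta-x}-\frac1{s-x}=\frac{-\ii\eta}{(s-x)(s+\ii\eta-x)}.
\]
Since $|s+\ii\eta-x|\ge|s-x|$, its modulus is at most $\eta/(s-x)^2$. This is $M$-integrable on the whole line, so the truncated differences converge absolutely as $A\to\infty$. By Lemma~\ref{lem:construction}, the truncated transforms converge to $F_M(s+\ii\eta)$ and to $F_M(s)$. Passing to the limit gives $F_M(s+\ii\eta)-F_M(s)=-\ii\eta\int(s-x)^{-1}(s+\ii\eta-x)^{-1}\dd M(x)$, and hence the bound. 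Letting $\eta\downarrow0$ shows that the principal value is the vertical boundary value.

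For the meromorphic extension, I would fix a bounded open interval $J$ and a point $z_0=\ii$. I would write
\[
F_M(z)=F_M(\ii)+\int_\R\Bigl(\frac1{z-x}-\frac1{\ii-x}\Bigr)\dd M(x).
\]
Next I would split the integral into a part over $x$ in a neighbourhood $J'\supset J$ and the complement. On the complement, the integrand equals $(\ii-z)/((z-x)(\ii-x))$. This is $O((1+x^2)^{-1})$ uniformly for $z$ in a complex neighbourhood of $J$ bounded away from $\R\setminus J'$. It is holomorphic there and dominated, so that part is holomorphic by Morera or dominated differentiation. The part over $J'$ is a finite sum of terms $w_x/(z-x)$ minus constants, since $M$ has finitely many atoms in bounded sets. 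It is therefore meromorphic, with simple poles exactly at the atoms. The resulting function agrees with $F_M$ off $\R$ by the identity defining it there, and with the principal value at real nonatoms by the boundary-value statement (or by the same absolute-integrability argument as in Lemma~\ref{lem:construction}). So it is the meromorphic continuation.

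Finally, differentiating under the integral sign in the holomorphic representation at a real nonatom $s$ gives $F'_M(s)=-\int(s-x)^{-2}\dd M(x)$. Differentiation is justified by domination by $C(1+x^2)^{-1}$ on a small disc avoiding atoms, using the first part. With $G_M=-F_M$ this yields \eqref{eq:derivative-transform}.

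The main, though mild, obstacle is the uniform domination near $s$ needed to justify exchanging the limit $A\to\infty$ with differentiation and continuation. It is handled by isolating the finitely many atoms in a bounded window.
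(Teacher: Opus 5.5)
Your proposal is correct and follows essentially the same route as the paper: the weighted mass bound $\int(1+x^2)^{-1}\,\mathrm dM<\infty$ gives the square-integrability, the kernel-difference identity bounded by $\eta/(s-x)^2$ gives the vertical Lipschitz estimate, and dominated differentiation of kernel differences gives the meromorphic continuation and \eqref{eq:derivative-transform}. The only cosmetic difference is that you subtract the transform at the base point $\ii$ and isolate all atoms of a bounded window at once, whereas the paper argues locally near each point and subtracts a single pole near each atom.
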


\begin{proof}
There is no mass near $s$, and $(s-x)^{-2}$ is bounded on the remainder of a fixed compact set. At infinity it is bounded by a constant times $(1+x^2)^{-1}$. This proves the first assertion. The identity
\[
\frac1{s+\ii\eta-x}-\frac1{s-x}
=\frac{-\ii\eta}{(s+\ii\eta-x)(s-x)}
\]
has absolute value at most $\eta/(s-x)^2$. Integrate and pass to symmetric limits to obtain the inequality. On a small complex neighborhood of a nonatom, the corresponding kernel differences and their derivatives are uniformly dominated by an integrable function. They thus give analytic continuation and differentiation under the integral. Near an atom, subtract its single pole first and apply the same argument to the remaining locally finite support. This proves the meromorphic assertion and \eqref{eq:derivative-transform}.
\end{proof}

\begin{lemma}[Pathwise shift covariance]\label{lem:covariance}
If \eqref{eq:pathwise-discrepancy} holds, it holds for every translate $\theta_tM$, with the same $K$, and \eqref{eq:shift-covariance} holds for all $t\in\R$ simultaneously.
\end{lemma}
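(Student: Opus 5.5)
The plan is to split the statement into two parts. First, the discrepancy conditions \eqref{eq:pathwise-discrepancy} pass to each translate. Second, the symmetric principal values of $\theta_tM$ agree with the shifted ones of $M$. Both should follow from explicit algebra on counting functions together with the construction in Lemma~\ref{lem:construction}. Fix $t\in\R$ and write $M_t=\theta_tM$, so $M_t(B)=M(B+t)$, and
\[
N_{M_t}(x)=M((t,t+x])=N_M(x+t)-N_M(t)\qquad(x\ge0),
\]
with the analogous signed formula for $x<0$. Hence
\[
D_t(x):=N_{M_t}(x)-Kx=D(x+t)-D(t)
\]
for all real $x$; a short check of the sign conventions confirms this in every case. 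Since $D(t)$ is a fixed number, we get $D_t(x)=o(|x|)$. Moreover $\int|D_t(x)|(1+x^2)^{-1}\,\mathrm dx\le\int|D(y)|(1+(y-t)^2)^{-1}\,\mathrm dy+\pi|D(t)|$. The ratio $(1+y^2)/(1+(y-t)^2)$ is bounded by a constant depending on $t$, so this integral is finite. Thus \eqref{eq:pathwise-discrepancy} holds for $M_t$ with the same $K$, and this holds simultaneously for all $t$ on the single event where it holds for $M$. Lemma~\ref{lem:construction} then constructs $F_{M_t}$ off $\supp M_t=\supp M-t$.

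For covariance off the real axis, change variables to get
\[
\int_{[-A,A]}\frac{\mathrm dM_t(x)}{z-x}=\int_{[t-A,t+A]}\frac{\mathrm dM(y)}{z+t-y}.
\]
This is a truncation of $F_M(z+t)$ on the window $[t-A,t+A]$, which is not symmetric about $0$. So we must show that the two symmetric difference regions $[-A,t-A)$ and $(A,t+A]$ (for $t>0$, say) contribute a vanishing amount. Each region has length $|t|$ and lies at distance about $A$ from $z+t$, and its $M$-mass is $K|t|+D(\pm A+\cdot)-D(\pm A)$, which is $O(1)+o(A)$. The contribution is therefore $o(A)/A\to0$. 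Equivalently, one can integrate by parts as in Lemma~\ref{lem:construction} and compare boundary terms $D(t\pm A)/(z+t\mp A)$, which tend to $0$ because $D=o(|x|)$. I expect this asymmetric-window step to be the only real point of the proof. It is exactly where $D(x)=o(|x|)$, rather than mere integrability, is used: a linear-size imbalance would produce a nonzero limit.

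At real points, let $s$ be such that $s+t\notin\supp M$, equivalently $s\notin\supp M_t$. Then the same window comparison applies verbatim, since the kernel $(s+t-y)^{-1}$ is also $O(1/A)$ on the two end regions. Alternatively, Lemma~\ref{lem:boundary} lets us take vertical limits $\eta\downarrow0$ of the identity already proved off the axis. Either side being defined forces the other to be defined, because the supports correspond under the shift. Since everything holds on the single event where $M$ satisfies \eqref{eq:pathwise-discrepancy}, the identity holds for all $t$ simultaneously.
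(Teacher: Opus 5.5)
Your proposal is correct and follows essentially the same route as the paper. Both arguments use the identity $D_t(x)=D(x+t)-D(t)$, a change of variables comparing $1+x^2$ with $1+(x+t)^2$, and then the comparison of the window $[t-A,t+A]$ with $[-A,A]$: the bounded-length end regions carry $o(A)$ mass against an $O(A^{-1})$ kernel, both off the axis and at real non-atoms, deterministically for every $t$ on one event. Your vertical-limit alternative via Lemma~\ref{lem:boundary} is a valid extra but is not needed.
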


\begin{proof}
The discrepancy based at the translated origin is
\begin{equation}\label{eq:shifted-discrepancy}
D_t(x)=N_{\theta_tM}(x)-Kx=D(x+t)-D(t).
\end{equation}
It is $o(|x|)$, and its weighted absolute integral is finite by a change of variables and comparison of $1+x^2$ with $1+(x+t)^2$. Lemma~\ref{lem:construction} therefore constructs the translated transform.

After changing variables, its truncation is the original transform at $z+t$, integrated over $[-A+t,A+t]$. For each fixed $c>0$, the mass of any interval of length at most $c$ adjacent to either endpoint $\pm A$ is $o(A)$: this follows from $N_M(x)=Kx+o(|x|)$, with the endpoint-atom bound used in Lemma~\ref{lem:construction}. The difference between the two truncations is consequently $o(1)$, because their kernels at these endpoints are $O_z(A^{-1})$. This argument is deterministic and applies to every fixed real $t$ on the same realization, proving the simultaneous assertion.
\end{proof}

\begin{proposition}[A deterministic periodization principle]\label{prop:periodization}
Let $M$ satisfy \eqref{eq:pathwise-discrepancy}. Let $M_R$ be the $R$-periodic extension of its restriction to $(-R/2,R/2]$, and set
\[
K_R=\frac{M((-R/2,R/2])}{R}.
\]
Then $K_R\to K$. For each $\varepsilon\in(0,1/2)$ and $B<\infty$,
\begin{equation}\label{eq:interior-periodization}
\sup_{|s|\le(1/2-\varepsilon)R}\ 
\sup_{|z|\le B}|F_M(s+z)-F_{M_R}(s+z)|\longrightarrow0.
\end{equation}
At common poles the difference in this formula is understood after cancelling the corresponding infinite terms. Moreover,
\begin{equation}\label{eq:deterministic-spatial-law}
\mu_R^M:=\frac1R\int_{-R/2}^{R/2}\delta_{F_M(s)/(\pi K)}\,\mathrm ds
\ \Longrightarrow\ \Cauchy(0,1).
\end{equation}
\end{proposition}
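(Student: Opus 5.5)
First, $K_R\to K$ is immediate from the first condition in \eqref{eq:pathwise-discrepancy}: $M((-R/2,R/2])=N_M(R/2)-N_M(-R/2)=KR+D(R/2)-D(-R/2)=KR+o(R)$. I will use $D_R$ for the counting discrepancy of $M_R$ relative to the constant $K_R$. The approach is to compare the two transforms through the integration-by-parts representation from the proof of Lemma~\ref{lem:construction}.

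For the uniform interior estimate \eqref{eq:interior-periodization}, fix $w=s+z$ with $|s|\le(1/2-\varepsilon)R$ and $|z|\le B$. Off the real axis, that representation gives
\[
F_M(w)-F_{M_R}(w)=\pm\ii\pi(K-K_R)+\int_\R\frac{D_R(x)-D(x)}{(w-x)^2}\,\mathrm dx .
\]
Inside $(-R/2,R/2]$ the two discrepancies differ only by the linear term $(K-K_R)x$, up to a constant normalization. I handle this term by integrating it explicitly against $(w-x)^{-2}$ over the interval. Since $w$ is at distance at least $\varepsilon R$ from the endpoints $\pm R/2$, the result is $O(|K-K_R|\log(1/\varepsilon))=o(1)$.

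Outside $(-R/2,R/2]$ the integrand is bounded by $(|D(x)|+|D_R(x)|)/(\varepsilon R+|x|-R/2)^2$. The tail of $D$ is controlled by the integrability condition. For $D_R$, periodicity gives $\sup_{|x|\le R/2}|D_R(x)|=o(R)$, and hence $|D_R(x)|=o(R)+|K-K_R||x|$ for all $x$. Integrating against a kernel of size $(\varepsilon R+\mathrm{dist})^{-2}$ then gives $o(R)/(\varepsilon R)=o(1)$, together with a term $|K-K_R|\cdot O(\log)$.

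The subtle point is the inside region near $w$. There one should not bound $D$ crudely; instead one uses that the difference $D_R-D$ is exactly linear there. Off the real axis this yields the uniform convergence. For $w$ real or near real, I would first subtract the common atoms within distance $1$ of $w$; these coincide for $M$ and $M_R$ when $R$ is large. Uniformity in $|z|\le B$ then follows, since the remaining kernels are smooth there, exactly as in Lemma~\ref{lem:boundary}.

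For the spatial law \eqref{eq:deterministic-spatial-law}, apply Theorem~\ref{thm:periodic} deterministically to $M_R$, with period $R$. The uniform measure of $F_{M_R}(s)/(\pi K_R)$ over $s\in(-R/2,R/2]$ is exactly $\Cauchy(0,1)$. On the central portion $|s|\le(1/2-\varepsilon)R$, the difference $F_M(s)-F_{M_R}(s)$ tends to zero uniformly, outside the atoms, which form a null set. Since $K_R\to K$, the empirical measures of $F_M/(\pi K)$ and $F_{M_R}/(\pi K_R)$ restricted to this portion are asymptotically close in bounded-Lipschitz distance. The complementary portion has normalized mass $2\varepsilon$. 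Letting $R\to\infty$ and then $\varepsilon\to0$ gives weak convergence to $\Cauchy(0,1)$.

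The main obstacle I expect is the uniform control of $D_R$ near the edges $\pm R/2$. There the periodic extension glues two ends of $M$ together, so I must check that the resulting jump is $o(R)$ and that the kernel sees it only through weight $(\varepsilon R)^{-2}$. Both facts come from $D(\pm R/2)=o(R)$.
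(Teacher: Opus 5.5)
Your strategy works and is a genuinely different route from the paper's, but one estimate fails as written.

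\textbf{The outer bound for $D_R$.} In the outer region you bound $|D_R(x)|\le o(R)+|K-K_R|\,|x|$, and you assign the linear part a contribution $|K-K_R|\cdot O(\log)$. In fact, for $|\Re w|\le(1/2-\varepsilon)R+B$,
\[
\int_{|x|>R/2}\frac{|K-K_R|\,|x|}{|w-x|^2}\,\dd x=\infty,
\]
so the outer integral does not close. The remedy is already in your setup. You measure $D_R$ against $K_R$, which is the exact mass per period of $M_R$. Hence $D_R$ is exactly $R$-periodic, and with $c_R=K-K_R$,
\[
e_R:=\sup_\R|D_R|=\sup_{(-R/2,R/2]}|D(x)+c_Rx|=o(R).
\]
The outer $D_R$ term is then at most $2e_R/(\varepsilon R-B)=o(1)$. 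The linear term $c_Rx$ appears only on the central interval. There your explicit integration gives $O(|c_R|/\varepsilon)$: besides your $\log(1/\varepsilon)$ there is a boundary term of size $|c_R|/\varepsilon$. This is still $o(1)$.

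\textbf{Real points.} Do not rely on Lemma~\ref{lem:boundary} for uniformity at real $w$. Its constant $\int\dd M(x)/(s-x)^2$ need not be bounded uniformly in $|s|\le(1/2-\varepsilon)R$, because atoms of $M$ may have mass $o(R)$. Instead, note that the pole-cancelled difference is the transform of $M-M_R$. This signed measure has no mass on $(-R/2,R/2]$, so the difference is holomorphic across that interval. Your off-axis bounds are uniform in $0<|\Im w|\le B$, so they pass to the real axis by continuity.

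\textbf{Comparison with the paper.} The paper integrates by parts the counting function $H_R=c_Rx+D-D_R$ of $M-M_R$, which vanishes on the central period, directly at real $u$. It pairs the tails at $\pm x$, so the linear part enters only through
\[
x\,\bigl|(u-x)^{-2}-(u+x)^{-2}\bigr|\le 4|u|/(x-|u|)^2.
\]
This handles real points and cancelled poles in one stroke. It also yields an explicit bound, in terms of $\int_{|x|\ge R/2}|D|/x^2$, $e_R/(R/2-|u|)$ and $|u||c_R|/(R/2-|u|)$, which is reused in Proposition~\ref{prop:spatial-rate}. You instead normalize each transform by its own density, so no tail pairing is needed. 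The price is the off-axis reference constants, an explicit logarithmic computation on the central interval, and a continuity step for real points.

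Your spatial-law argument is the same as the paper's. Note only that $K_R>0$ for large $R$, which is needed to apply Theorem~\ref{thm:periodic} and to normalize by $K_R$.
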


\begin{proof}
Put $c_R=K-K_R$ and $D_R(x)=N_{M_R}(x)-K_Rx$. The density hypothesis gives $c_R=o(1)$. The function $D_R$ is $R$-periodic, and on the central period it equals $D(x)+c_Rx$. Since
\[
\sup_{|x|\le R/2}|D(x)|=o(R),
\]
we have $e_R:=\sup_\R|D_R|=o(R)$. The signed counting function of $M-M_R$ is
\[
H_R(x)=N_M(x)-N_{M_R}(x)=c_Rx+D(x)-D_R(x),
\]
and vanishes on the interior of the central period.

For $|u|<R/2$, integrate by parts at a symmetric outer cutoff $A$ and pair the positive and negative tails. The linear boundary terms sum to $2c_RAu/(u^2-A^2)$ and cancel in the limit; the remaining terms tend to zero by $D=o(|x|)$ and boundedness of $D_R$. Thus
\begin{equation}\label{eq:paired-periodization}
F_M(u)-F_{M_R}(u)
=-\int_{R/2}^\infty\left\{
\frac{H_R(x)}{(u-x)^2}+\frac{H_R(-x)}{(u+x)^2}
\right\}\,\mathrm dx.
\end{equation}
The linear terms in this integral are paired before integration. The resulting integral is absolutely convergent; it also defines the analytic continuation of the difference through its cancelled poles in $|u|<R/2$.

If $|u|\le(1/2-\varepsilon)R$, then $x-|u|\ge2\varepsilon x$ for $x\ge R/2$. The terms involving $D$ in \eqref{eq:paired-periodization} are bounded by
\[
\frac1{4\varepsilon^2}\int_{|x|\ge R/2}\frac{|D(x)|}{x^2}\,\mathrm dx=o(1),
\]
uniformly in $u$. The terms involving $D_R$ are bounded by $2e_R/(R/2-|u|)=o(1)$. For the linear terms, use
\[
x\left|\frac1{(u-x)^2}-\frac1{(u+x)^2}\right|
\le\frac{4|u|}{(x-|u|)^2}.
\]
Indeed, one of $|x+u|$ and $|x-u|$ is at least $x$, and the other is at least $x-|u|$. Their contribution is at most
\[
\frac{4|u||c_R|}{R/2-|u|}=O_\varepsilon(|c_R|)=o(1).
\]
Replacing $u$ by $s+z$ and, for sufficiently large $R$, replacing $\varepsilon$ by $\varepsilon/2$, proves \eqref{eq:interior-periodization}.

Let $S_R$ be uniform on $(-R/2,R/2]$. For fixed $\varepsilon$, the complement of $|S_R|\le(1/2-\varepsilon)R$ has probability $2\varepsilon$, whereas the difference of transforms converges uniformly to zero on this event. First let $R\to\infty$ and then $\varepsilon\downarrow0$. We obtain
\[
F_M(S_R)-F_{M_R}(S_R)\longrightarrow0
\quad\hbox{in probability with respect to }S_R.
\]
For all sufficiently large $R$, $K_R>0$. Theorem~\ref{thm:periodic} gives $F_{M_R}(S_R)/(\pi K_R)\sim\Cauchy(0,1)$ exactly. Since $K_R\to K$, Slutsky's theorem proves \eqref{eq:deterministic-spatial-law}.
\end{proof}

\begin{proof}[Proof of Theorem~\ref{thm:stationary}]
Lemma~\ref{lem:integrable-discrepancy} supplies a probability-one event on which the deterministic hypotheses \eqref{eq:pathwise-discrepancy} hold. Lemmas~\ref{lem:construction} and \ref{lem:covariance} prove the construction and covariance assertions there. Proposition~\ref{prop:periodization} gives the spatial law.

For measurability, integrate uniformly bounded continuous approximations, supported on $[-n-1,n+1]$, to $-x^{-1}\mathbf1_{\{1/m<|x|<n\}}$, then let $m\to\infty$ followed by $n\to\infty$. The deterministic endpoints are almost surely atom-free and the support avoids zero, so these iterated limits give a measurable version of $F_M(0)$; set it to zero if they fail. The measurable translation action gives joint measurability in $M,t$.

For the conditional assertion, let $Y$ be bounded and $\I$-measurable and let $\varphi$ be bounded and continuous. For each deterministic $t$, covariance gives $F_M(t)=F_{\theta_tM}(0)$, while $Y\circ\theta_t=Y$ almost surely. Stationarity therefore yields
\[
\E\left[Y\varphi\left(\frac{F_M(t)}{\pi K}\right)\right]
=\E\left[Y\varphi\left(\frac{F_M(0)}{\pi K}\right)\right].
\]
Average over $t\in[-L/2,L/2]$ and apply \eqref{eq:spatial-cauchy} and dominated convergence. The result is
\[
\E\left[Y\varphi\left(\frac{F_M(0)}{\pi K}\right)\right]
=\E[Y]\int_\R\frac{\varphi(x)}{\pi(1+x^2)}\,\mathrm dx.
\]
A countable convergence-determining family of test functions identifies the conditional probability law. Stationarity gives the same assertion at every deterministic $s$.
\end{proof}

The unit-shift field $\mathcal J$ cannot replace $\I$ in \eqref{eq:conditional-cauchy}. For $M=\sum_{n\in\Z}\delta_{n+U}$ with $U$ uniform on $[0,1)$, the unit shift fixes $M$, so $\mathcal J=\sigma(U)$ modulo null sets, whereas $\I$ is trivial. Thus $F_M(0)=-\pi\cot(\pi U)$ is $\mathcal J$-measurable.

In the case where all the weights 
involved in the measure $M$ are integers, we can interpret the Stieltjes transform on $M$ in terms of the logarithmic derivative of a random holomorphic function. The following result holds: 
\begin{proposition}\label{prop:coordinate}
Under the assumptions of  Theorem~\ref{thm:stationary}, fix a deterministic $s$ and put
\[
\Xi_s=\theta_sM,\qquad a_s=F_M(s),\qquad
\delta_s=\int_\R x^{-2}\,\mathrm d\Xi_s(x).
\]
Almost surely, $\delta_s<\infty$ and
\begin{equation}\label{eq:centered-herglotz}
G_M(s+z) = - F_M (s+z) = -a_s+\int_\R\left(\frac1{x-z}-\frac1x\right)\mathrm d\Xi_s(x),
\qquad z\in\C\setminus\R.
\end{equation}
In particular, the linear Herglotz coefficient is zero, and $a_s/(\pi K)$ is standard Cauchy conditionally on $\I$.

If $M$ is integer-valued and $E_1(w)=(1-w)e^w$, the product
\begin{equation}\label{eq:canonical-product}
\Psi_s(z)=e^{a_sz}\prod_{x\in\Xi_s}E_1(z/x)
\end{equation}
converges locally uniformly, and defines an entire function with $\Psi_s(0)=1$, and $-\Psi'_s/\Psi_s=G_M(s+\cdot)$. Here, $\prod_{x\in\Xi_s}$ denotes the product on the support of $\Xi_s$, the factor indexed by $x$ being repeated according to the multiplicity $\Xi_s(\{x\})$. 

\end{proposition}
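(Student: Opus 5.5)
We first place ourselves on the probability-one event of Lemma~\ref{lem:integrable-discrepancy} on which \eqref{eq:pathwise-discrepancy} holds. Since $s$ is deterministic and the mean measure is $K\,\mathrm dx$, the point $s$ is almost surely not an atom; by the locally finite support convention, $0\notin\supp\Xi_s$. By Lemma~\ref{lem:covariance}, $\Xi_s=\theta_sM$ again satisfies \eqref{eq:pathwise-discrepancy} with the same $K$, and $F_{\Xi_s}(z)=F_M(s+z)$. Lemma~\ref{lem:boundary} applied to $\Xi_s$ at the nonatom $0$ then gives $\delta_s=\int x^{-2}\,\mathrm d\Xi_s(x)<\infty$, and shift covariance gives $a_s=F_{\Xi_s}(0)$.

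For \eqref{eq:centered-herglotz}, fix $z\in\C\setminus\R$ and write, for each cutoff $A$,
\[
\int_{[-A,A]}\frac{\mathrm d\Xi_s(x)}{z-x}-\int_{[-A,A]}\frac{\mathrm d\Xi_s(x)}{-x}
=\int_{[-A,A]}\Bigl(\frac1{z-x}+\frac1x\Bigr)\mathrm d\Xi_s(x).
\]
The integrand on the right equals $z/(x(z-x))$. It is $O_z(x^{-2})$ at infinity, and it is bounded near $0$ because $0$ has an atom-free neighbourhood. It is therefore absolutely integrable, by $\delta_s<\infty$ together with local finiteness. Letting $A\to\infty$, the left side tends to $F_{\Xi_s}(z)-F_{\Xi_s}(0)$ by Lemma~\ref{lem:construction} and Lemma~\ref{lem:boundary}. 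This gives $F_M(s+z)=a_s+\int(\frac1{z-x}+\frac1x)\,\mathrm d\Xi_s$, which after negation is \eqref{eq:centered-herglotz}. The Nevanlinna representation of the Herglotz function $G_M(s+\cdot)$ is thus realised with no $bz$ term. An independent check comes from \eqref{eq:imaginary-infinity}: $G(\ii\eta)/\eta\to0$, which forces the linear coefficient to vanish. The conditional Cauchy statement for $a_s/(\pi K)$ is Theorem~\ref{thm:stationary}(iv).

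For the product, assume $M$ is integer-valued, so $\Xi_s$ is a sum of unit masses counted with multiplicity. Use $|\log E_1(w)|\le C|w|^2$ for $|w|\le1/2$. On $|z|\le R$, the factors with $|x|>2R$ contribute a log-sum bounded by $CR^2\int x^{-2}\,\mathrm d\Xi_s<\infty$; this is uniform, so the product converges locally uniformly. The finitely many remaining factors are polynomials times exponentials. Hence $\Psi_s$ is entire with $\Psi_s(0)=1$, since $E_1(0)=1$ and $e^{0}=1$. Locally uniform convergence allows termwise logarithmic differentiation off the zeros:
\[
\frac{\Psi_s'}{\Psi_s}(z)=a_s+\sum_{x}\Bigl(\frac{1}{z-x}+\frac1x\Bigr).
\]
This uses $\frac{E_1'(w)}{E_1(w)}=1-\frac1{1-w}$, whence $\frac{\mathrm d}{\mathrm dz}\log E_1(z/x)=\frac1x+\frac1{z-x}$. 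The sum is absolutely convergent and equals $F_M(s+z)$ by the previous paragraph, i.e.\ $-\Psi_s'/\Psi_s=G_M(s+\cdot)$.

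The main point requiring care is the identification of the symmetric principal value at $0$ with $a_s$ and of the centred integral with the limit of the symmetric truncations. The danger is a mismatch between cutoffs centred at $s$ and cutoffs centred at $0$. This is handled by Lemma~\ref{lem:covariance}, which is a pathwise statement for all $t$ simultaneously, together with the absolute integrability of the kernel $z/(x(z-x))$. Once absolute integrability is in place, the choice of truncation no longer matters.
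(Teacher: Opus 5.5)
Your proof is correct and follows the paper's argument. Shift covariance identifies the principal value of $\Xi_s$ at zero with $a_s$, and absolute integrability of the kernel difference $z/(x(z-x))$ lets you subtract that value from the shifted transform. For the product, the bound $\log E_1(w)=O(w^2)$ together with termwise logarithmic differentiation gives convergence and the identity $-\Psi_s'/\Psi_s=G_M(s+\cdot)$. Your extra justification that the linear coefficient vanishes, via uniqueness of the Nevanlinna representation or via $G(\ii\eta)/\eta\to0$ from \eqref{eq:imaginary-infinity} applied to $\Xi_s$, is a harmless addition; the paper leaves this point implicit.
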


\begin{proof}
The point $s$ is almost surely outside the support of $M$, and finiteness of $\delta_s$ follows from Lemma~\ref{lem:boundary}. By shift covariance, the symmetric principal value of $\int x^{-1}\,\mathrm d\Xi_s(x)$ is $-a_s$. The kernel difference in \eqref{eq:centered-herglotz} is absolutely integrable: there is no mass near zero, and it is $O_z(x^{-2})$ at infinity. Subtracting the value at zero from the shifted transform therefore gives the formula. The conditional law is Theorem~\ref{thm:stationary}.

For integer masses, $\sum_{x\in\Xi_s}x^{-2}<\infty$, with multiplicities. Since $\log E_1(w)=O(w^2)$ for $|w|\le1/2$, the product converges uniformly on compact sets and defines an entire function. Its logarithmic derivative, away from its zeros, is
\[
-\frac{\Psi'_s(z)}{\Psi_s(z)}
=-a_s+\sum_{x\in\Xi_s}\left(\frac1{x-z}-\frac1x\right),
\]
where the sum converges locally uniformly away from the support. This is \eqref{eq:centered-herglotz}.
\end{proof}

To specify the real coordinate topology used below, let $\mathscr X$ consist of triples $(\Xi,a,\delta)$ with $\Xi$ an integer-valued positive point measure, $0\notin\supp\Xi$,
\[
S(\Xi):=\int_\R x^{-2}\,\mathrm d\Xi(x)<\infty,\qquad a\in\R,
\qquad \delta\ge S(\Xi).
\]
The empty measure is allowed. We give $\mathscr X$ the subspace topology of vague convergence of $\Xi$ and ordinary convergence of $a$ and $\delta$. The quantity $q=\delta-S(\Xi)$ is the quadratic defect. The associated formulas are
\begin{align}\label{eq:completed-formulas}
W(z)&=-a+qz+\int_\R\left(\frac1{x-z}-\frac1x\right)\mathrm d\Xi(x),\\
\Psi(z)&=e^{az-qz^2/2}\prod_{x\in\Xi}E_1(z/x).\notag
\end{align}
These are completed real spectral coordinates of the type considered in \cite{Companion}. All states constructed from the principal values in this section have $q=0$. The proof below uses the stated topology and the formulas directly. We write $\Hol(\Omega)$ for holomorphic functions on $\Omega$, with uniform convergence on compact subsets.

\begin{theorem}[Field-valued random periodization]\label{thm:field-periodization}
Under the assumptions of Theorem~\ref{thm:stationary}, let $M_R$ be the periodization in Proposition~\ref{prop:periodization}, and let $S_R$ be independent of $M$ and uniform on $(-R/2,R/2]$. For almost every realization of $M$ and every compact $C\subset\C\setminus\R$,
\begin{equation}\label{eq:field-coupling}
\sup_{z\in C}|G_{M_R}(S_R+z)-G_M(S_R+z)|\longrightarrow0
\end{equation}
in probability with respect to $S_R$. Consequently, under the joint law of $M$ and $S_R$,
\begin{equation}\label{eq:field-convergence}
G_{M_R}(S_R+\cdot)\ \Longrightarrow\ G_M
\quad\hbox{in }\Hol(\C\setminus\R).
\end{equation}

If $M$ is integer-valued, set
\[
\Xi_R=\theta_{S_R}M_R,\qquad a_R=F_{M_R}(S_R),\qquad
\delta_R=\int_\R\frac{\mathrm dM_R(x)}{(x-S_R)^2},
\]
and $\Xi=M$, $a=F_M(0)$, $\delta=\int_\R x^{-2}\,\mathrm dM(x)$. Then
\begin{equation}\label{eq:state-convergence}
(\Xi_R,a_R,\delta_R)\ \Longrightarrow\ (\Xi,a,\delta)
\quad\hbox{in }\mathscr X,
\end{equation}
and
\begin{equation}\label{eq:product-convergence}
e^{a_Rz}\prod_{x\in\Xi_R}E_1(z/x)
\ \Longrightarrow\ e^{az}\prod_{x\in\Xi}E_1(z/x)
\quad\hbox{in }\Hol(\C).
\end{equation}
Every state in \eqref{eq:state-convergence} has zero quadratic defect.
\end{theorem}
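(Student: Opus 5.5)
The plan is to fix a realization in the probability-one event of Lemma~\ref{lem:integrable-discrepancy}, on which \eqref{eq:pathwise-discrepancy} holds, and to work deterministically with $S_R$ as the only randomness. The coupling \eqref{eq:field-coupling} is the off-axis version of \eqref{eq:interior-periodization}. Fix a compact $C\subset\C\setminus\R$ and choose $B$ with $C\subset\{|z|\le B\}$. The representation \eqref{eq:paired-periodization} holds off the real axis for $|\operatorname{Re}u|<R/2$, and its bounds use only $|u|$. Hence, for fixed $\varepsilon$, the supremum over $z\in C$ of the difference tends to zero uniformly on the event $|S_R|\le(1/2-\varepsilon)R$. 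This event has probability $1-2\varepsilon$. Letting $R\to\infty$ and then $\varepsilon\downarrow0$ gives convergence in probability.

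For \eqref{eq:field-convergence}, the next step is to show that $G_M(S_R+\cdot)$, under the joint law, has the law of $G_M$ in $\Hol(\C\setminus\R)$. This follows from covariance \eqref{eq:shift-covariance}: $G_M(S_R+\cdot)=G_{\theta_{S_R}M}$. Independence of $S_R$ and $M$, together with stationarity and joint measurability of $(M,t)\mapsto\theta_tM$, shows that $\theta_{S_R}M$ has the law of $M$. The coupling then transfers the law by a Slutsky argument in the Polish space $\Hol(\C\setminus\R)$. There I would use the metric $\sum_k2^{-k}\min(1,\sup_{C_k}|\cdot|)$ over an exhaustion by compacts, and dominated convergence over $M$ to pass from conditional convergence in probability to joint convergence.

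For \eqref{eq:state-convergence}, I would again compare with the stationary object $(\theta_{S_R}M,F_M(S_R),\int(x-S_R)^{-2}\,\mathrm dM)$, which by the same argument has the law of $(\Xi,a,\delta)$. The pair $(\theta_{S_R}M_R,\theta_{S_R}M)$ agrees on $(-\varepsilon R,\varepsilon R)$ whenever $|S_R|\le(1/2-\varepsilon)R$. Vague distance therefore tends to zero on that event, and $F_{M_R}(S_R)-F_M(S_R)\to0$ in probability by \eqref{eq:interior-periodization}. Since $S_R$ is almost surely not an atom, these values are defined.

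The main obstacle is $\delta_R$. By \eqref{eq:derivative-transform}, $\delta_R=G'_{M_R}(S_R)$ and $\delta=G'_M(S_R)$ in the coupled version. The uniform convergence in \eqref{eq:interior-periodization} holds on the disc $|z|\le B$ after cancelling the common poles, and it concerns a holomorphic difference. Cauchy estimates applied to this difference on a small disc therefore give convergence of derivatives at $z=0$ without needing to avoid atoms. So $\delta_R-\delta\to0$ in probability on the good event. Alternatively, one can differentiate \eqref{eq:paired-periodization} directly, which gives the kernel $2H_R(x)/(u-x)^3$, and bound it as before. I will need to check carefully that the differentiated linear terms still cancel after pairing, which the bound $x|(u-x)^{-3}-(u+x)^{-3}|=O(|u|x^{-3})$ handles.

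Zero quadratic defect holds because $\delta_R=S(\Xi_R)$ and $\delta=S(\Xi)$ by definition. Finally, \eqref{eq:product-convergence} would follow from continuity of the map $(\Xi,a,\delta)\mapsto\Psi$ from $\mathscr X$ to $\Hol(\C)$. Vague convergence together with convergence of $S(\Xi)$ controls the tail factors uniformly, since $|\log E_1(w)|\le C|w|^2$. The near factors converge by vague convergence of finitely many atoms, and the continuous mapping theorem then concludes.
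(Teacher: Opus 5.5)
Your treatment of \eqref{eq:field-coupling}, \eqref{eq:field-convergence}, \eqref{eq:state-convergence} and the zero defect follows the paper's proof. You use the same good event $|S_R|\le(1/2-\varepsilon)R$ and the same stationary comparison triple. You also use the same Cauchy estimate for the pole-cancelled difference $V_R(z)=G_{M_R}(S_R+z)-G_M(S_R+z)$, which gives $V_R'(0)=\delta_R-\widetilde\delta_R\to0$.

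You take a genuinely different route for \eqref{eq:product-convergence}. The paper stays inside the coupling. On the good event the two products have the same zeros on a fixed disk, so $\Psi_R/\widetilde\Psi_R=\exp(-\int_0^zV_R)$ tends to one uniformly. Tightness of $\widetilde\Psi_R\stackrel{d}{=}\Psi_0$ then upgrades this to $\Psi_R-\widetilde\Psi_R\to0$ in probability. That argument needs no continuity property of the product map, and it yields a coupling-level statement.

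You instead deduce \eqref{eq:product-convergence} from \eqref{eq:state-convergence} by the continuous mapping theorem. This isolates a reusable fact: the product depends continuously on the state, which explains why $\delta$ belongs to the topology of $\mathscr X$. It only gives convergence in law, and it needs one point of care. Convergence in $\mathscr X$ gives convergence of $\delta$, not of $S(\Xi)$. Moreover, the map $(\Xi,a,\delta)\mapsto e^{az}\prod E_1(z/x)$ is discontinuous at states with positive defect, because escaping $x^{-2}$-mass produces a factor $e^{-qz^2/2}$.

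What you actually need is continuity at zero-defect points, and this does hold. At such a point, $\limsup S(\Xi_n)\le\lim\delta_n=S(\Xi)\le\liminf S(\Xi_n)$. The last inequality is vague lower semicontinuity, valid since $0\notin\supp\Xi$. Hence $\limsup_n\int_{|x|>A}x^{-2}\,\mathrm d\Xi_n(x)\to0$ as $A\to\infty$, and your tail bound $|\log E_1(w)|\le C|w|^2$ applies. The limit law is carried by zero-defect states, so the mapping theorem concludes.

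One small slip appears in your alternative route for $\delta_R$. The quantity $x|(u-x)^{-3}-(u+x)^{-3}|$ is not $O(|u|x^{-3})$; at $u=0$ it equals $2x^{-2}$. So the differentiated linear terms do not cancel. They need not: they are $O_\varepsilon(|c_R|x^{-2})$ and integrate to $O_\varepsilon(|c_R|/R)$. Your primary Cauchy-estimate route is unaffected.
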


\begin{proof}
Work first with a fixed realization satisfying \eqref{eq:pathwise-discrepancy}. For fixed $\varepsilon>0$, Proposition~\ref{prop:periodization} gives uniform convergence on each fixed $z$-disk whenever $|S_R|\le(1/2-\varepsilon)R$. The complement has probability $2\varepsilon$. Letting $R\to\infty$ and then $\varepsilon\downarrow0$ proves \eqref{eq:field-coupling}. A countable exhaustion by compact sets gives convergence in probability for a metric defining the compact-open topology. Averaging the conditional probabilities proves the same coupling convergence under the joint law. By stationarity and covariance, $G_M(S_R+\cdot)$ has exactly the law of $G_M$. This proves \eqref{eq:field-convergence}.

For the remaining statements, compare the periodic state to
\[
\widetilde\Xi_R=\theta_{S_R}M,\qquad
\widetilde a_R=F_M(S_R),\qquad
\widetilde\delta_R=\int_\R\frac{\mathrm dM(x)}{(x-S_R)^2}.
\]
This triple has exactly the law of $(\Xi,a,\delta)$. Conditional uniformity ensures that $S_R$ is not an atom; stationarity does the same at zero. The required square-integrability of the kernels follows from Lemma~\ref{lem:boundary}, and, for $M_R$, from periodicity.

On $|S_R|\le(1/2-\varepsilon)R$, the two translated point measures agree on $(-\varepsilon R,\varepsilon R)$. Thus
\[
V_R(z):=G_{M_R}(S_R+z)-G_M(S_R+z)
\]
has all its poles cancelled in that disk and is holomorphic there. The proof of \eqref{eq:interior-periodization} applies across the cancelled poles and gives $V_R\to0$ uniformly on every fixed disk. In particular,
\[
V_R(0)=-a_R+\widetilde a_R\longrightarrow0.
\]
Cauchy's integral formula on a fixed circle about zero also gives $V'_R(0)\to0$. By \eqref{eq:derivative-transform},
\[
V'_R(0)=\delta_R-\widetilde\delta_R.
\]
As before, the exceptional proportion $2\varepsilon$ can be made arbitrarily small. We conclude that these two coordinate differences tend to zero in joint probability. The point measures agree on every fixed compact with probability tending to one, which gives convergence to zero of their vague distance. The same coupling argument as for the fields now proves \eqref{eq:state-convergence} in the explicitly specified coordinate topology. The definition of $\delta_R$ and $\delta$ proves that their defects vanish.

Finally, write $\Psi_R$ for the product on the left of \eqref{eq:product-convergence}, and write $\widetilde\Psi_R=\Psi_{S_R}$ for the product associated with the shifted original measure. On a fixed disk, on the interior event just used, these products have identical zeros with identical multiplicities. Their quotient therefore extends to a holomorphic nonvanishing function on the disk. It has value one at zero and logarithmic derivative $-V_R$. Consequently
\[
\frac{\Psi_R(z)}{\widetilde\Psi_R(z)}
=\exp\left(-\int_0^z V_R(w)\,\mathrm dw\right).
\]
This quotient converges uniformly to one on every fixed disk in joint probability. Stationarity gives $\widetilde\Psi_R\stackrel{d}=\Psi_0$ as random entire functions, so their suprema on a fixed disk form a tight family. Multiplication by these suprema shows $\Psi_R-\widetilde\Psi_R\to0$ uniformly there in probability. A countable exhaustion proves convergence in the compact-open topology and hence \eqref{eq:product-convergence}.
\end{proof}

\begin{remark}[Zero defect and imaginary infinity]\label{rem:zero-defect}
For a field $W$ in \eqref{eq:completed-formulas},
\[
\frac{\operatorname{Im}W(\ii\eta)}\eta
=q+\int_\R\frac{\mathrm d\Xi(x)}{x^2+\eta^2}\longrightarrow q.
\]
Dominated convergence applies because $\int x^{-2}\,\mathrm d\Xi<\infty$. Thus a finite limit at imaginary infinity forces $q=0$. Zero defect alone does not give a nondegenerate Cauchy boundary law; shift amenability and the limiting value must be checked separately. For example, a finite point measure has zero defect, but its transform tends to zero under spatial shifts and its spatial law is a point mass. Also, $q=0$ means $\delta=S(\Xi)$, not $\delta=0$.
\end{remark}

The deterministic density in Theorem~\ref{thm:stationary} excludes variation of the intensity between invariant components. When this variation is allowed, the same pathwise argument identifies the resulting scale mixture.

\begin{theorem}[Componentwise Cauchy law]\label{thm:components}
Let $M$ be a stationary positive weighted point measure with $\E M((0,1])<\infty$, and put
\[
K_{\I}=\E[M((0,1])\mid\I].
\]
Assume $K_{\I}>0$ almost surely and, with $D_{\I}(x)=N_M(x)-K_{\I}x$, assume almost surely that
\begin{equation}\label{eq:component-discrepancy}
D_{\I}(x)=o(|x|),\qquad
\int_\R\frac{|D_{\I}(x)|}{1+x^2}\,\mathrm dx<\infty.
\end{equation}
Then the principal values exist simultaneously at every nonatom, are pathwise shift covariant, and, for every deterministic $s$,
\begin{equation}\label{eq:component-cauchy}
\Law\left(\left.\frac{F_M(s)}{\pi K_{\I}}\,\right|\I\right)=\Cauchy(0,1)
\quad\hbox{almost surely}.
\end{equation}
In particular, $F_M(s)=\pi K_{\I}X$ with $X$ standard Cauchy and independent of $\I$. If $K=\E M((0,1])$, then $F_M(s)/(\pi K)$ is standard Cauchy if and only if $K_{\I}=K$ almost surely.

A sufficient condition for \eqref{eq:component-discrepancy} is an almost-sure bound
\begin{equation}\label{eq:component-power}
|N_M(x)-K_{\I}x|\le C_M(1+|x|^\beta),\qquad x\in\R,
\end{equation}
where $\beta\in[0,1)$ is deterministic and $C_M$ is almost surely finite.
\end{theorem}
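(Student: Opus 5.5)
The argument runs pathwise, with the random constant $K_{\I}$ playing the role of $K$. The deterministic results (Lemmas~\ref{lem:construction}, \ref{lem:boundary}, \ref{lem:covariance} and Proposition~\ref{prop:periodization}) only require \eqref{eq:pathwise-discrepancy} for \emph{some} constant $K>0$. On the probability-one event where $K_{\I}>0$ and \eqref{eq:component-discrepancy} hold, we may therefore apply them with $K=K_{\I}(\omega)$ frozen for the given realization.

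This yields simultaneous existence of the principal values at every nonatom and pathwise shift covariance. For covariance we must make sure the constant is the same for every translate. We choose a version of $K_{\I}$ that is genuinely invariant, that is, a function of $M$ measurable with respect to $\I$ and satisfying $K_{\I}\circ\theta_t=K_{\I}$ for all $t$. For example, take $\limsup_n M((0,n])/n$, which equals $K_{\I}$ almost surely by Birkhoff's theorem together with the argument of Lemma~\ref{lem:integrable-discrepancy}. Lemma~\ref{lem:covariance} then shows that $\theta_tM$ satisfies \eqref{eq:pathwise-discrepancy} with the same constant. Proposition~\ref{prop:periodization} gives, almost surely,
\[
\frac1L\int_{-L/2}^{L/2}\delta_{F_M(s)/(\pi K_{\I})}\,\mathrm ds\Longrightarrow\Cauchy(0,1).
\]

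For the conditional law we repeat the proof of Theorem~\ref{thm:stationary}(iv). Let $Y$ be bounded and $\I$-measurable and $\varphi$ bounded continuous. Covariance gives $F_M(t)/(\pi K_{\I})=\bigl(F_{\cdot}(0)/(\pi K_{\I})\bigr)\circ\theta_t$, because $K_{\I}$ is shift invariant. Combined with $Y\circ\theta_t=Y$ and stationarity, this shows that $\E[Y\varphi(F_M(t)/(\pi K_{\I}))]$ does not depend on $t$. Averaging over $t\in[-L/2,L/2]$, applying the spatial limit and using dominated convergence identifies it with $\E[Y]\int\varphi\,\mathrm d\Cauchy(0,1)$. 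This proves \eqref{eq:component-cauchy}.

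The representation $F_M(s)=\pi K_{\I}X$ follows by setting $X=F_M(s)/(\pi K_{\I})$. For the equivalence, first suppose $K_{\I}=K$ almost surely; then the claim is immediate. Conversely, suppose $F_M(s)/(\pi K)=(K_{\I}/K)X$ is standard Cauchy, with $X$ independent of $K_{\I}$. Its characteristic function is then $\E e^{-|\xi|K_{\I}/K}=e^{-|\xi|}$ for all $\xi$. Jensen's inequality gives $\E e^{-|\xi|K_{\I}/K}\ge e^{-|\xi|\E K_{\I}/K}=e^{-|\xi|}$, with equality only if $K_{\I}$ is almost surely constant, and that constant must be $K$.

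For the sufficient condition \eqref{eq:component-power}, note that $|D_{\I}(x)|\le C_M(1+|x|^\beta)$ with $\beta<1$. This gives $D_{\I}=o(|x|)$, and the integral $\int(1+|x|^\beta)/(1+x^2)\,\mathrm dx$ is finite. The main obstacle is the invariance bookkeeping in the second step: we must select the version of $K_{\I}$ so that the pathwise hypotheses and constant transfer to every translate on a single null-set-free event. The $\limsup$ version handles this, since it is exactly shift invariant.
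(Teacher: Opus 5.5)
Your proposal is correct and follows essentially the paper's route: apply Lemmas~\ref{lem:construction}, \ref{lem:covariance} and Proposition~\ref{prop:periodization} pathwise with the constant frozen at $K_{\I}(\omega)$, use the asymptotic density as a genuinely shift-invariant version of $K_{\I}$, repeat the averaging argument of Theorem~\ref{thm:stationary}(iv), and obtain the equivalence from strict convexity of $v\mapsto e^{-|t|v}$. The only difference is minor: Birkhoff's theorem is not needed to identify $\limsup_n M((0,n])/n$ with $K_{\I}$, because the hypothesis $D_{\I}(x)=o(|x|)$ already gives $N_M(x)/x\to K_{\I}$ almost surely.
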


\begin{proof}
For a realization satisfying \eqref{eq:component-discrepancy}, apply Lemmas~\ref{lem:construction} and \ref{lem:covariance} with the constant $K$ replaced by $\kappa=K_{\I}>0$. Proposition~\ref{prop:periodization} gives
\[
\frac1R\int_{-R/2}^{R/2}\delta_{F_M(u)/(\pi K_{\I})}\,\mathrm du
\ \Longrightarrow\ \Cauchy(0,1)
\quad\hbox{almost surely}.
\]
The asymptotic density provides a version of $K_{\I}$ invariant under all translations on this event. For bounded $\I$-measurable $Y$ and bounded continuous $\varphi$, stationarity therefore gives
\[
\E\left[Y\varphi\left(\frac{F_M(u)}{\pi K_{\I}}\right)\right]
=\E\left[Y\varphi\left(\frac{F_M(0)}{\pi K_{\I}}\right)\right].
\]
Averaging in $u$ and using dominated convergence proves the conditional law as in Theorem~\ref{thm:stationary}. In particular, the variable $X=F_M(s)/(\pi K_{\I})$ is independent of $\I$.

The characteristic function after normalization by the mean intensity is
\[
\E\exp\left(\ii t\frac{F_M(s)}{\pi K}\right)
=\E\exp\left(-|t|\frac{K_{\I}}K\right).
\]
Since $\E[K_{\I}/K]=1$, strict convexity of $v\mapsto e^{-|t|v}$ for any $t\ne0$ shows that this equals $e^{-|t|}$ only when $K_{\I}/K$ is constant almost surely. The converse is immediate. Finally, \eqref{eq:component-power} implies both assertions in \eqref{eq:component-discrepancy}, since $\beta<1$.
\end{proof}

\begin{example}[A visible component and an independent transform]\label{ex:equal-intensity-mixture}
Let $B$ be Bernoulli with parameter $1/2$. Conditional on $B=0$, let $M$ be a Poisson process of intensity one; conditional on $B=1$, let it be the sine-kernel determinantal process with kernel
\[
K_{\mathrm{sine}}(x,y)=\frac{\sin\pi(x-y)}{\pi(x-y)}.
\]
Both laws are stationary and ergodic, and both have intensity one. The sine process is mixing \cite{Soshnikov}. The laws are distinct: for instance, the number variance on a fixed nonempty interval is strictly smaller than its length for the sine process, and equals its length for the Poisson process. Distinct ergodic stationary laws are mutually singular on invariant events; equivalently, spatial averages of a bounded observable with different expectations distinguish them. Thus $B$ is measurable, modulo null sets, with respect to the invariant sigma-field of the mixture; see also \cite[Chapter~10]{Kallenberg}.

The number variances over an interval of length $L$ are $L$ and $O(\log(2+L))$, respectively. Cauchy--Schwarz therefore gives $\E|M((0,L])-L|=O(1+L^{1/2})$ under the mixture. Theorem~\ref{thm:stationary} applies, and
\[
\Law\left(\left.\frac{F_M(s)}\pi\,\right|B\right)=\Cauchy(0,1).
\]
The infinite configuration identifies the component, whereas its one-point transform is independent of that information. Nonergodicity alone does not create a scale mixture.
\end{example}

\begin{example}[A nontrivial scale mixture]\label{ex:random-intensity}
Let $\Lambda>0$ have finite mean, and conditionally on $\Lambda$ let $M$ be a Poisson process of intensity $\Lambda$. The conditional process is ergodic and its asymptotic density is $\Lambda$, so $K_{\I}=\Lambda$. To check \eqref{eq:component-power}, fix $\beta\in(1/2,1)$ and condition on $\Lambda=\lambda$. The Poisson exponential bound gives, for all sufficiently large integers $n$,
\[
\P\bigl(|M((0,n])-\lambda n|>n^\beta\mid\Lambda=\lambda\bigr)
\le2\exp\left(-\frac{n^{2\beta}}{2(\lambda n+n^\beta)}\right).
\]
For $Z\sim\operatorname{Poisson}(\mu)$, $\log\E e^{t(Z-\mu)}=\mu(e^t-1-t)$. Chernoff's bound and $(1+v)\log(1+v)-v\ge v^2/[2(1+v)]$ for $v\ge0$ give the upper tail; $\log\E e^{-t(Z-\mu)}\le\mu t^2/2$ for $t\ge0$ gives the lower tail.
These probabilities are summable. Borel--Cantelli on both half-lines, followed by monotonic interpolation between integer arguments, proves \eqref{eq:component-power}, with an almost-surely finite random constant. Theorem~\ref{thm:components} yields
\[
F_M(s)=\pi\Lambda X,
\]
where $X$ is standard Cauchy and independent of $\Lambda$. Normalization by the mean intensity gives a standard Cauchy law precisely when $\Lambda$ is constant. If $\Lambda$ has finite second moment, the conditional variance formula gives
\[
\operatorname{Var}N_M(x)=\E[\Lambda]|x|+\operatorname{Var}(\Lambda)x^2.
\]
If its second moment is infinite, the variance on every nonempty bounded interval is infinite. 
\end{example}

There following corollaries give simple conditions under which the assumptions, and then the conclusions, of Theorem~\ref{thm:stationary} are satisfied. 
\begin{corollary}[Polynomial first moments]\label{cor:polynomial}
All conclusions of Theorem~\ref{thm:stationary} hold if, for some $C<\infty$ and $\alpha\in[0,1)$,
\begin{equation}\label{eq:polynomial-first-moment}
\E|N_M(x)-Kx|\le C(1+|x|^\alpha),\qquad x\in\R.
\end{equation}
\end{corollary}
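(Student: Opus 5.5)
The plan is to verify hypothesis \eqref{eq:general-discrepancy} of Theorem~\ref{thm:stationary} with an explicit choice of $h$, and then invoke that theorem, which delivers all four conclusions. I would take $h(r)=C(1+r^\alpha)$ for $r\ge1$. It is finite, nonnegative and measurable. Assumption \eqref{eq:polynomial-first-moment} gives $\E|N_M(x)-Kx|\le h(|x|)$ for every $|x|\ge1$.

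The only computation is the integrability. Since $\alpha<1$,
\[
\int_1^\infty\frac{C(1+r^\alpha)}{r^2}\,\mathrm dr
= C\left(1+\frac1{1-\alpha}\right)<\infty .
\]
The case $\alpha=0$ is included, since the integral then equals $2C$.

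Stationarity, positivity and the intensity $K\in(0,\infty)$ are part of the standing setting of Theorem~\ref{thm:stationary}, and I would read the corollary as assuming them. With these in place, Theorem~\ref{thm:stationary} applies directly, and its conclusions (i)--(iv) hold.

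There is no real obstacle here. The one point to check is that the bound is needed only for $|x|\ge1$: the behaviour for $|x|<1$ is already handled inside Lemma~\ref{lem:integrable-discrepancy} through stationarity. So the restriction of \eqref{eq:polynomial-first-moment} to $|x|\ge1$ is exactly what Theorem~\ref{thm:stationary} requires.
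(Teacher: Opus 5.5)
Your proposal is correct and matches the paper's proof, which simply observes that $h(r)=C(1+r^\alpha)$ satisfies \eqref{eq:general-discrepancy}. Your evaluation of the integral as $C\bigl(1+\frac{1}{1-\alpha}\bigr)$ just writes out that one-line check.
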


\begin{proof}
The function $h(r)=C(1+r^\alpha)$ satisfies \eqref{eq:general-discrepancy}.
\end{proof}

\begin{corollary}[Subquadratic variance]\label{cor:variance}
Let $M$ be stationary with finite positive intensity. If, for some $\alpha\in[0,1)$,
\begin{equation}\label{eq:variance-condition}
\operatorname{Var}N_M(x)\le C(1+|x|^{2\alpha}),\qquad x\in\R,
\end{equation}
then \eqref{eq:polynomial-first-moment} holds with the same $\alpha$, and Theorem~\ref{thm:stationary} applies.
\end{corollary}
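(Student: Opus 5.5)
The plan is to pass from the variance bound to the first-moment bound \eqref{eq:polynomial-first-moment} and then invoke Corollary~\ref{cor:polynomial}. The key identity is that stationarity makes $N_M$ unbiased: for $x\ge0$, $\E N_M(x)=\E M((0,x])=Kx$, because the mean measure is $K\,\mathrm dx$. For $x<0$, $\E N_M(x)=-\E M((x,0])=-K|x|=Kx$. Hence $N_M(x)-Kx$ is centered for every real $x$.

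By Cauchy--Schwarz (Jensen applied to the square),
\[
\E|N_M(x)-Kx|\le\bigl(\operatorname{Var}N_M(x)\bigr)^{1/2}
\le C^{1/2}\bigl(1+|x|^{2\alpha}\bigr)^{1/2}\le C^{1/2}\bigl(1+|x|^{\alpha}\bigr),
\]
using $\sqrt{a+b}\le\sqrt a+\sqrt b$. This is \eqref{eq:polynomial-first-moment} with constant $C^{1/2}$ and the same exponent $\alpha\in[0,1)$.

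Corollary~\ref{cor:polynomial} then yields all conclusions of Theorem~\ref{thm:stationary}. If one prefers to go directly, take $h(r)=C^{1/2}(1+r^\alpha)$ in \eqref{eq:general-discrepancy}; then $\int_1^\infty h(r)r^{-2}\,\mathrm dr<\infty$ because $\alpha<1$.

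There is no real obstacle. The only points needing care are the sign convention for $N_M$ on the negative half-line and the observation that a finite variance presupposes a finite second moment. The latter is implicit in the hypothesis \eqref{eq:variance-condition}: the bound forces finite variance at every $x$.
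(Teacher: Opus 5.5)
Your proposal is correct and is essentially the paper's proof: stationarity gives $\E N_M(x)=Kx$ on both half-lines, Cauchy--Schwarz turns the variance bound into \eqref{eq:polynomial-first-moment} with constant $C^{1/2}$ in place of $C$, and Corollary~\ref{cor:polynomial} then gives the conclusions of Theorem~\ref{thm:stationary}.
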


\begin{proof}
Stationarity gives $\E N_M(x)=Kx$. Cauchy--Schwarz yields \eqref{eq:polynomial-first-moment}, after changing its constant.
\end{proof}

\begin{lemma}[An almost-sure polynomial bound]\label{lem:polynomial-discrepancy}
Under \eqref{eq:polynomial-first-moment}, for every $\beta\in((1+\alpha)/2,1)$ there is an almost-surely finite $C_M$ such that
\begin{equation}\label{eq:pathwise-power}
|N_M(x)-Kx|\le C_M(1+|x|^\beta),\qquad x\in\R.
\end{equation}
\end{lemma}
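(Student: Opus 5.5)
The plan is Markov's inequality along a geometric sequence, Borel--Cantelli, and monotone interpolation between grid points. Fix $\beta\in((1+\alpha)/2,1)$ and treat the positive half-line first; the negative half-line follows by the same argument for the reflected counting function, since stationarity gives $\E|N_M(-r)+Kr|=\E|N_M(r)-Kr|$. A single grid will not suffice, because Markov only gives probability of order $x^{\alpha-\beta}$, which is not summable along any polynomial grid unless $\beta-\alpha$ is large. So I would use a two-scale construction: a sparse dyadic family of blocks, each subdivided into a fine grid.

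For $k\ge1$ set $x_k=2^k$ and subdivide $[x_k,x_{k+1}]$ into points $y_{k,i}=x_k+i\ell_k$, $0\le i\le m_k$, with spacing $\ell_k=2^{k\beta}$ and $m_k\approx2^{k(1-\beta)}$. The exponent condition enters in the following estimate: by \eqref{eq:polynomial-first-moment}, a union bound and Markov,
\[
\P\Bigl(\max_{i}|N_M(y_{k,i})-Ky_{k,i}|>2^{k\beta}\Bigr)\le (m_k+1)\,C(1+2^{(k+1)\alpha})2^{-k\beta}
\lesssim 2^{k(1-\beta)+k\alpha-k\beta},
\]
which is summable exactly when $1+\alpha-2\beta<0$, that is, $\beta>(1+\alpha)/2$. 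Borel--Cantelli then gives, almost surely for all large $k$, $|N_M(y)-Ky|\le 2^{k\beta}\le |y|^\beta$ at every grid point $y$ of block $k$.

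To reach all real $x$, interpolate by monotonicity. For $x\in[y_{k,i},y_{k,i+1}]$, $N_M$ is nondecreasing, so
\[
N_M(y_{k,i})-Ky_{k,i}-K\ell_k\le N_M(x)-Kx\le N_M(y_{k,i+1})-Ky_{k,i+1}+K\ell_k .
\]
Hence $|N_M(x)-Kx|\le(1+K)2^{k\beta}\lesssim |x|^\beta$ for all $x\ge x_{k_0}$, with $k_0$ random but almost surely finite. On the compact range $[0,x_{k_0}]$, $N_M$ is bounded by the locally finite mass $M((0,x_{k_0}])$. Absorbing this and the negative side into a random constant $C_M$ gives \eqref{eq:pathwise-power}.

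The only real obstacle is choosing the spacing so that two requirements hold together: the interpolation error $K\ell_k$ must be $O(x^\beta)$, and the union bound over $\asymp x/\ell_k$ grid points must stay summable. The choice $\ell_k=x_k^{\beta}$ meets the first requirement with equality, and the second then reduces to the stated threshold $\beta>(1+\alpha)/2$, which confirms the form of the lemma.
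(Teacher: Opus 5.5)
Your argument is correct and is essentially the paper's proof: Markov's inequality on a deterministic grid with spacing $O(x^\beta)$, then Borel--Cantelli and monotone interpolation, with summability reducing to $\beta>(1+\alpha)/2$. The one inaccuracy is your motivating remark that no single polynomial grid works. The paper uses the single grid $n^q$ with $q(\beta-\alpha)>1\ge q(1-\beta)$, and such a $q$ exists exactly when $\beta>(1+\alpha)/2$; your dyadic blocks with spacing $2^{k\beta}$ are essentially this grid with $q=1/(1-\beta)$.
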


\begin{proof}
Choose $q>0$ with $q(\beta-\alpha)>1$ and $q(1-\beta)\le1$. Such a choice is possible because $\beta>(1+\alpha)/2$. Markov's inequality gives
\[
\sum_{n\ge1}\P\bigl(|N_M(n^q)-Kn^q|>n^{q\beta}\bigr)
\le C\sum_{n\ge1}\bigl(n^{-q\beta}+n^{-q(\beta-\alpha)}\bigr)<\infty.
\]
The same estimate holds at $-n^q$. Borel--Cantelli proves the desired bound at both grids. If $n^q\le x\le(n+1)^q$, monotonicity bounds the discrepancy at $x$ by the two grid discrepancies and $K((n+1)^q-n^q)$. This last term is $O(n^{q-1})=O(n^{q\beta})$ by the choice of $q$. Interpolation on the negative half-line is identical. Finitely many exceptional grid points and the remaining compact interval are absorbed into $C_M$.
\end{proof}

For probability measures on $\R$, let $\dBL$ denote the supremum of the difference of integrals over real test functions bounded in absolute value by one and with Lipschitz constant at most one.

\begin{proposition}[A quantitative spatial law]\label{prop:spatial-rate}
Under \eqref{eq:polynomial-first-moment}, for every $\beta\in((1+\alpha)/2,1)$, almost surely,
\begin{equation}\label{eq:spatial-rate}
\dBL\bigl(\mu_R^M,\Cauchy(0,1)\bigr)
=O_M\bigl(R^{\beta-1}\log(2+R)\bigr).
\end{equation}
Here $\mu_R^M$ is defined in \eqref{eq:deterministic-spatial-law}. On the same event, for $|u|<R/2$,
\begin{equation}\label{eq:sharp-periodization-bound}
|F_M(u)-F_{M_R}(u)|
\le C_M\frac{R^\beta}{R/2-|u|},\qquad R\ge1,
\end{equation}
with cancellation at common poles.
\end{proposition}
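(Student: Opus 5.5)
We work on the almost-sure event of Lemma~\ref{lem:polynomial-discrepancy}, where $|D(x)|\le C_M(1+|x|^\beta)$ for all real $x$. We first prove \eqref{eq:sharp-periodization-bound}, by making every estimate in the proof of Proposition~\ref{prop:periodization} explicit. Two quantities control that proof:
\[
|c_R|=\frac{|D(R/2)-D(-R/2)|}{R}\le C_M R^{\beta-1},
\qquad
e_R=\sup_{|x|\le R/2}|D(x)+c_Rx|\le C_M R^\beta .
\]
We insert these into the paired representation \eqref{eq:paired-periodization}, splitting $H_R=c_Rx+D-D_R$ as before, and bound the three contributions uniformly in $|u|<R/2$.

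\begin{itemize}
\item The linear terms contribute at most $4|u||c_R|/(R/2-|u|)\le C_M R^\beta/(R/2-|u|)$.
\item The $D_R$ terms contribute at most $2e_R/(R/2-|u|)$.
\item For the $D$ terms, we use $|u\mp x|\ge x-|u|$ for $x\ge R/2$. This gives the bound
\[
C_M\int_{R/2}^\infty\frac{1+x^\beta}{(x-|u|)^2}\,\mathrm dx .
\]
Put $d=R/2-|u|$ and substitute $y=x-|u|\ge d$. The integral is then at most $\int_d^\infty (1+(y+R)^\beta)y^{-2}\,\mathrm dy\lesssim (1+R^\beta)/d+d^{\beta-1}$. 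Since $d\le R$, we have $d^{\beta-1}=d^\beta/d\le R^\beta/d$.
\end{itemize}

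All three contributions are therefore $O_M(R^\beta/(R/2-|u|))$, which is \eqref{eq:sharp-periodization-bound}. The difference $F_M-F_{M_R}$ is analytic across the cancelled poles, so the bound holds there after cancellation.

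For \eqref{eq:spatial-rate}, let $S_R$ be uniform on $(-R/2,R/2]$. By Theorem~\ref{thm:periodic}, $F_{M_R}(S_R)/(\pi K_R)$ is exactly standard Cauchy. We use the triangle inequality in $\dBL$ with two steps.

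First, compare $F_M(S_R)/(\pi K)$ with $F_{M_R}(S_R)/(\pi K)$. For a test function $\varphi$ with $\|\varphi\|_\infty\le1$ and Lipschitz constant at most one, the resulting error is at most $\E\min\{2,|F_M(S_R)-F_{M_R}(S_R)|/(\pi K)\}$. By \eqref{eq:sharp-periodization-bound}, with $d=R/2-|S_R|$ uniform on $(0,R/2)$, this is bounded by
\[
\frac{2}{R}\int_0^{R/2}\min\Bigl\{2,\frac{C_MR^\beta}{d}\Bigr\}\,\mathrm dd .
\]
The integral over $d\le C_MR^\beta$ contributes $O(R^{\beta-1})$. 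The integral over the remaining range contributes $O(R^{\beta-1}\log R)$. This step is the source of the logarithm.

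Second, compare $\Cauchy(0,K_R/K)$ with $\Cauchy(0,1)$. Writing $X$ for a standard Cauchy variable, the $\dBL$ distance is at most $\E\min\{2,|K_R/K-1||X|\}=O(|K_R-K|\log(1/|K_R-K|))$, because $\E\min\{2,\epsilon|X|\}\asymp\epsilon\log(1/\epsilon)$. Since $|K_R-K|=|c_R|=O(R^{\beta-1})$, this is again $O(R^{\beta-1}\log R)$.

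Adding the two steps gives \eqref{eq:spatial-rate}.

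The main obstacle is the $D$-term integral in the first part, near the edge of the period. There the naive bound $\int_{|x|\ge R/2}|D|x^{-2}$ used in Proposition~\ref{prop:periodization} fails, so the estimate must be kept in the form $R^\beta/d$ uniformly as $d\to0$. Once that is done, the logarithm appears naturally from integrating $1/d$ over the period.
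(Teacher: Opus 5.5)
Your proof is correct and follows essentially the paper's argument. You insert explicit bounds on $c_R$ and $D_R$ into the paired representation \eqref{eq:paired-periodization}, then couple through a common uniform point, and the $1/d$ profile near the period edges produces the logarithm. The differences are minor. You bound the tail integral by substitution and subadditivity of $x^\beta$, where the paper splits at $R$. For the scale comparison you couple $(K_R/K)X$ with $X$, which costs $O(R^{\beta-1}\log R)$. The paper instead uses the $L^1$ distance between the Cauchy densities of scales $K_R/K$ and $1$, at most $(2/\pi)|\log(K_R/K)|$, giving $O(R^{\beta-1})$ for that step. Your extra logarithm is harmless, since the first step already carries one.
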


\begin{proof}
Work on the event of Lemma~\ref{lem:polynomial-discrepancy}. In the notation of Proposition~\ref{prop:periodization},
\[
|c_R|=O_M(R^{\beta-1}),\qquad \sup|D_R|=O_M(R^\beta).
\]
For $x\ge R/2$, write $H_R(x)=c_Rx+r_R^+(x)$ and $H_R(-x)=-c_Rx+r_R^-(x)$, where
$|r_R^\pm(x)|\le C_M(1+x^\beta)$. Put $d=R/2-|u|$. Pairing the linear terms in \eqref{eq:paired-periodization} bounds their contribution by
$4|u||c_R|/d=O_M(R^\beta/d)$. For the remainder, split the integral at $R$:
\[
\int_{R/2}^R\frac{1+x^\beta}{(x-|u|)^2}\,\mathrm dx
\le C\frac{R^\beta}{d},\qquad
\int_R^\infty\frac{1+x^\beta}{(x-|u|)^2}\,\mathrm dx
\le C_\beta R^{\beta-1}\le C_\beta\frac{R^\beta}{d}.
\]
This proves \eqref{eq:sharp-periodization-bound}.

Couple the original and periodic transforms using a common uniform point. For any test function in the definition of $\dBL$, their normalized integrals differ by at most
\[
\frac1R\int_{-R/2}^{R/2}
\min\left(2,\frac{C_MR^\beta}{\pi K(R/2-|s|)}\right)ds
=O_M(R^{\beta-1}\log(2+R)).
\]
To see the last bound, change variables to the distance from an endpoint and split the integral where that distance is a constant multiple of $R^\beta$. The contribution below the splitting point is $O_M(R^{\beta-1})$, and the remaining integral is logarithmic.

The periodic normalized transform has the Cauchy density of scale $r_R=K_R/K=1+O_M(R^{\beta-1})$. If $f_r(x)=r/[\pi(r^2+x^2)]$, then
\[
\int_\R|\partial_r f_r(x)|\,\mathrm dx=\frac2{\pi r}.
\]
Hence $\int|f_r-f_1|\le(2/\pi)|\log r|$. The difference between the periodic law and the standard Cauchy law therefore costs $O_M(R^{\beta-1})$. Combining the two estimates proves \eqref{eq:spatial-rate}.
\end{proof}

\section{Quantitative and asymmetric truncations}\label{sec:truncation}

For a deterministic observation point $s$, define the centered truncation
\[
F^{\mathrm{sym}}_{M,A}(s)=\int_{[s-A,s+A]}\frac{\mathrm dM(x)}{s-x}.
\]
The observation point and any fixed endpoints are almost surely not atoms
for a stationary measure. 
On the assumptions of  Theorem~\ref{thm:stationary}, we have $\lim_{A\to\infty}F^{\mathrm{sym}}_{M,A}(s)=F_{\theta_sM}(0)=F_M(s)$.

\begin{theorem}[Quantitative symmetric truncation]\label{thm:truncation}
Under the assumptions of Theorem~\ref{thm:stationary}, for every deterministic $s$ and $A\ge1$,
\begin{equation}\label{eq:general-truncation-rate}
\E|F_M(s)-F^{\mathrm{sym}}_{M,A}(s)|
\le T_h(A):=\frac{2h(A)}A+2\int_A^\infty\frac{h(x)}{x^2}\,\mathrm dx.
\end{equation}
Consequently,
\begin{equation}\label{eq:truncation-bl}
\dBL\left(\Law\left(\frac{F^{\mathrm{sym}}_{M,A}(s)}{\pi K}\right),\Cauchy(0,1)\right)
\le\frac{T_h(A)}{\pi K}.
\end{equation}
Both bounds hold with $h$ replaced by $g$ from Lemma~\ref{lem:integrable-discrepancy}, for which $T_g(A)\to0$; no sublinearity of the chosen envelope $h$ is required.
Under \eqref{eq:polynomial-first-moment}, one has $T_h(A)=O_{C,\alpha}(A^{\alpha-1})$. The logarithmic choice $h(x)=C\sqrt{\log(2+x)}$ gives
\[
T_h(A)=O_C\left(\frac{\sqrt{\log(2+A)}}A\right).
\]
\end{theorem}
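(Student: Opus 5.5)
By stationarity and the pathwise shift covariance of Lemma~\ref{lem:covariance}, I will reduce to $s=0$. Indeed $F^{\mathrm{sym}}_{M,A}(s)$ is the truncation of $F_{\theta_sM}(0)$ over $[-A,A]$, and $\theta_sM$ has the law of $M$. So it suffices to bound $\E|F_M(0)-\int_{[-A,A]}x^{-1}\,\mathrm d(-M)(x)|$.

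I will write the tail as a limit of integrals over $A<|x|\le A'$ and integrate by parts against $D(x)=N_M(x)-Kx$, as in Lemma~\ref{lem:construction}. The reference measure $K\,\mathrm dx$ contributes nothing on symmetric annuli, because $1/x$ is odd. For nonatom cutoffs the integration by parts gives
\[
\int_{A<|x|\le A'}\frac{\mathrm dM(x)}{-x}
=-\frac{D(A')}{A'}-\frac{D(-A')}{A'}+\frac{D(A)}{A}+\frac{D(-A)}{A}
-\int_{A<|x|\le A'}\frac{D(x)}{x^2}\,\mathrm dx .
\]
The boundary terms at $A'$ vanish almost surely as $A'\to\infty$, since $D=o(|x|)$ by Lemma~\ref{lem:integrable-discrepancy}. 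The last integral converges absolutely almost surely, so
\[
F_M(0)-F^{\mathrm{sym}}_{M,A}(0)=\frac{D(A)+D(-A)}{A}-\int_{|x|>A}\frac{D(x)}{x^2}\,\mathrm dx .
\]
Taking expectations, using $\E|D(\pm A)|\le h(A)$ and Tonelli on the integral, gives exactly $2h(A)/A+2\int_A^\infty h(x)x^{-2}\,\mathrm dx=T_h(A)$. Deterministic $A$ is almost surely not an atom, so the convention at the endpoints does not matter. This proves \eqref{eq:general-truncation-rate}.

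The bound \eqref{eq:truncation-bl} then follows by coupling. The variable $F_M(s)/(\pi K)$ is exactly standard Cauchy by Theorem~\ref{thm:stationary}(iv). For any test function bounded by one and $1$-Lipschitz, the difference of expectations is at most the $L^1$ distance between the coupled variables, which is at most $T_h(A)/(\pi K)$.

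The same computation works with $g(r)=\E|N_M(r)-Kr|$ in place of $h$. This is legitimate because stationarity gives $\E|D(-r)|=g(r)$, and the only inputs used are the pointwise expectations. For $T_g(A)\to0$, I will use two facts. First, $g(A)/A\to0$ by Lemma~\ref{lem:integrable-discrepancy}. Second, $\int_1^\infty g(x)x^{-2}\,\mathrm dx\le\int_1^\infty h(x)x^{-2}\,\mathrm dx<\infty$, so its tail tends to zero. This shows no sublinearity of $h$ itself is needed.

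The explicit rates are elementary computations. For $h=C(1+x^\alpha)$ with $\alpha<1$, both terms are $O(A^{\alpha-1})$. For $h=C\sqrt{\log(2+x)}$, the tail integral is bounded by integrating by parts, or by using monotonicity of $\sqrt{\log(2+x)}/\sqrt x$, and is $O(\sqrt{\log(2+A)}/A)$.

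The only delicate point is justifying the interchange of the limit $A'\to\infty$ with expectation. I plan to avoid any interchange by taking the almost-sure limit first, pathwise, and only then applying the triangle inequality and Tonelli to the resulting absolutely convergent expression.
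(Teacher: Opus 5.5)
Your proposal is correct and follows essentially the same route as the paper. You reduce to $s=0$, integrate by parts on the symmetric annulus, let the outer cutoff go to infinity pathwise using $D=o(|x|)$, and then bound the boundary and tail terms in expectation via Tonelli. The differences are cosmetic: you keep $D(A)$ and $D(-A)$ separate where the paper combines them into the imbalance $Q(x)=M((0,x])-M([-x,0))$, and you get the logarithmic rate from monotonicity of $\log(2+x)/x$ rather than from the substitution $x=At$.
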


\begin{proof}
Stationarity reduces the proof to $s=0$. Put
\[
Q(x)=M((0,x])-M([-x,0)),\qquad x>0.
\]
The intensity contributions cancel, and for each deterministic $x\ge1$ the absence of endpoint atoms gives $\E|Q(x)|\le2h(x)$. As a function of $x$, $Q$ is the signed distribution function of the radial positive-side mass minus the radial negative-side mass. Stieltjes integration by parts therefore yields the exact identity
\begin{equation}\label{eq:imbalance-ibp}
\int_{A<|x|\le B}\frac{\mathrm dM(x)}x
=\frac{Q(B)}B-\frac{Q(A)}A+\int_A^B\frac{Q(x)}{x^2}\,\mathrm dx.
\end{equation}
The first term tends to zero almost surely, by \eqref{eq:pathwise-discrepancy} and its endpoint-atom consequence, and in $L^1$, since $\E|Q(B)|\le2g(B)=o(B)$ by Lemma~\ref{lem:integrable-discrepancy}. The integral on the right converges absolutely almost surely and in $L^1$ by Tonelli. Let $B\to\infty$, take absolute values and expectations, and recall the sign $F_M(0)=-\pv\int x^{-1}\,\mathrm dM(x)$. This proves \eqref{eq:general-truncation-rate}, also with $g$ in place of $h$. Since $g(A)/A\to0$ and $g\le h$, we have $T_g(A)\to0$.

Coupling the truncation with its limit and testing against the functions defining $\dBL$ proves \eqref{eq:truncation-bl}. For polynomial $h$, integrate $x^{\alpha-2}$ directly. For the logarithmic bound, substitute $x=At$ and use
\[
\log(2+At)\le\log(2+A)+\log t,\qquad t\ge1.
\]
The inequality $\sqrt{a+b}\le\sqrt a+\sqrt b$ and finiteness of $\int_1^\infty t^{-2}\sqrt{\log t}\,\mathrm dt$ give the stated rate.
\end{proof}

\begin{theorem}[Asymmetric cutoffs]\label{thm:asymmetric}
Under the assumptions of Theorem~\ref{thm:stationary}, almost surely, for every $s\notin\supp M$,
\begin{equation}\label{eq:asymmetric-limit}
\int_{[s-L,s+R]}\frac{\mathrm dM(x)}{s-x}-K\log\frac LR
\longrightarrow F_M(s)\qquad(L,R\to\infty).
\end{equation}
In particular, if $L_n,R_n\to\infty$ and $\log(L_n/R_n)\to\ell\in\R$, then, for every deterministic $s$,
\begin{equation}\label{eq:asymmetric-law}
\frac1{\pi K}\int_{[s-L_n,s+R_n]}\frac{\mathrm dM(x)}{s-x}
\ \Longrightarrow\ \Cauchy(\ell/\pi,1).
\end{equation}
\end{theorem}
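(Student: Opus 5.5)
The plan is to compare the asymmetric truncation with the symmetric one and to show that the excess is asymptotically $-K\log(L/R)$ plus $o(1)$. I work on the probability-one event of Lemma~\ref{lem:integrable-discrepancy}, where \eqref{eq:pathwise-discrepancy} holds; on it all arguments are deterministic. By Lemma~\ref{lem:covariance}, the same conditions hold for $\theta_sM$ with the same $K$, and $F_{\theta_sM}(0)=F_M(s)$. It therefore suffices to treat $s=0$, provided $0$ is not an atom of the shifted measure, for an arbitrary measure satisfying \eqref{eq:pathwise-discrepancy}. The simultaneity over all $s\notin\supp M$ is then automatic, since the single event does not depend on $s$.

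For $s=0$, suppose $L\ge R$; the other case is symmetric. The asymmetric integral equals the symmetric truncation over $[-R,R]$ plus $\int_{[-L,-R)}x^{-1}\cdot(-1)\,\mathrm dM(x)$, that is, plus $\int_{(R,L]}y^{-1}\,\mathrm dM^-(y)$, where $M^-$ is the reflected measure on the negative half-line. The symmetric part tends to $F_M(0)$ by Lemma~\ref{lem:construction}. I then write $\mathrm dM^-(y)=K\,\mathrm dy+\mathrm dD^-(y)$, with $D^-(y)=-N_M(-y)-Ky$. The Lebesgue part contributes exactly $K\log(L/R)$, and this cancels the subtracted term $K\log(L/R)$.

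The remainder is handled by Stieltjes integration by parts:
\[
\int_{(R,L]}\frac{\mathrm dD^-(y)}{y}=\frac{D^-(L)}{L}-\frac{D^-(R)}{R}+\int_R^L\frac{D^-(y)}{y^2}\,\mathrm dy .
\]
Each boundary term is $o(1)$ because $D=o(|x|)$. The integral is bounded by $\int_R^\infty|D(-y)|y^{-2}\,\mathrm dy$, which tends to zero as $R\to\infty$ by the integrability in \eqref{eq:pathwise-discrepancy}. These bounds depend only on $\min(L,R)\to\infty$, uniformly in the ratio. That uniformity is the main point to verify: the divergent part is exactly the Lebesgue log term, and everything else is controlled by the tail of the discrepancy integral. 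Endpoint atoms at $-L$ or $-R$ cause no trouble, because the convention $\mathrm dN_M=M$ and the endpoint-atom bound $M(\{x\})=o(|x|)$ from Lemma~\ref{lem:construction} make the closed versus half-open choice cost $o(1)$.

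For the distributional statement \eqref{eq:asymmetric-law}, fix a deterministic $s$; it is almost surely not an atom. By \eqref{eq:asymmetric-limit},
\[
\frac{1}{\pi K}\int_{[s-L_n,s+R_n]}\frac{\mathrm dM(x)}{s-x}=\frac{F_M(s)}{\pi K}+\frac{\log(L_n/R_n)}{\pi}+o(1)
\]
almost surely. Theorem~\ref{thm:stationary}(iv) gives $F_M(s)/(\pi K)\sim\Cauchy(0,1)$, and the deterministic shift tends to $\ell/\pi$. Slutsky's theorem then yields $\Cauchy(\ell/\pi,1)$.
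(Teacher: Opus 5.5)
Your proposal is correct and follows essentially the same route as the paper. You reduce to the translated measure, split off the Lebesgue part to obtain exactly $K\log(L/R)$, control the discrepancy part by Stieltjes integration by parts using $D=o(|x|)$ and the tail of $\int|D(x)|(1+x^2)^{-1}\,\mathrm dx$, and conclude with Slutsky. The only difference is cosmetic: you compare with the symmetric truncation at the moving radius $\min(L,R)$, whereas the paper fixes an inner cutoff $B$, takes the upper limit in $L,R$, and then lets $B\to\infty$.
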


\begin{proof}
Work on the probability-one event where \eqref{eq:pathwise-discrepancy} holds and fix $s$ outside the support. Translate by $s$, and denote its discrepancy by $D_s$ as in \eqref{eq:shifted-discrepancy}. It is $o(|x|)$ with integrable weighted absolute value. Fix a symmetric inner cutoff $B$ and compare the asymmetric truncation, for $L,R>B$, to the truncation on $[s-B,s+B]$. The two outer integrals of the reference measure $K\,\mathrm dx$ are
\[
K\log\frac LB+K\log\frac BR=K\log\frac LR.
\]
It remains to consider the corresponding outer integrals of $\mathrm dD_s(y)/(-y)$ over $[-L,-B]$ and $[B,R]$. Integration by parts bounds them in absolute value by
\[
\frac{|D_s(-L)|}{L}+\frac{|D_s(R)|}{R}
+\frac{|D_s(-B)|+|D_s(B)|}{B}
+\int_{B<|y|<\max(L,R)}\frac{|D_s(y)|}{y^2}\,dy,
\]
initially with nonatom endpoints. Endpoint atoms give additional terms tending to zero at large cutoffs and may equivalently be handled by one-sided distribution functions. Take the upper limit as $L,R\to\infty$. Then let $B\to\infty$ through nonatoms. The resulting bound tends to zero, while the symmetric inner truncation tends to $F_M(s)$. This proves \eqref{eq:asymmetric-limit} for arbitrary outer cutoffs. All the properties of $D_s$ hold for every $s$ on the same pathwise event, so the assertion is simultaneous. Theorem~\ref{thm:stationary} and Slutsky's theorem give \eqref{eq:asymmetric-law}.
\end{proof}

\section{Positive marks and transfer from local limits}\label{sec:transfer}

Positivity gives the monotonicity in the finite identity and the Herglotz property of the limiting transform. A second moment of the atom weights is not required.

\begin{corollary}[Positive marks with a $p$-moment]\label{cor:marks}
Let $X$ be a stationary simple point process of intensity $\lambda>0$. Suppose
\[
\E|X((0,x])-\lambda x|\le h_0(x),\qquad x\ge1,
\]
where $h_0$ is finite, nonnegative and measurable, with $\int_1^\infty h_0(x)x^{-2}\,\mathrm dx<\infty$. Independently mark its points by identically distributed independent positive variables $W_j$ such that
\[
m=\E W_1\in(0,\infty),\qquad \E W_1^p<\infty
\quad\hbox{for some }p>1.
\]
Set $r=\min(p,2)$ and $M=\sum_jW_j\delta_{X_j}$. Then $M$ is stationary, has intensity $K=\lambda m$, and satisfies Theorem~\ref{thm:stationary} with
\begin{equation}\label{eq:marked-discrepancy}
h(x)=m h_0(x)+C x^{1/r}.
\end{equation}
In particular, if $h_0(x)=C_0(1+x^{\alpha_0})$ for some $\alpha_0<1$, then \eqref{eq:polynomial-first-moment} holds for every $\alpha\in[\max(\alpha_0,1/r),1)$. The marks may have infinite variance.
\end{corollary}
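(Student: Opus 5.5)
We split the discrepancy of the marked measure into a counting part and a mark-fluctuation part, bound each in first moment, and then invoke Theorem~\ref{thm:stationary}.

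\emph{Stationarity and intensity.} Independent i.i.d.\ marking of a stationary process gives a stationary marked process. Hence $M$ is stationary. Wald's identity, applied conditionally on $X$, gives $\E M((0,1])=m\,\E X((0,1])=\lambda m=K$.

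\emph{Splitting the discrepancy.} For $x\ge1$ write $n=X((0,x])$ and
\[
N_M(x)-Kx=m\bigl(n-\lambda x\bigr)+\sum_{j:\,X_j\in(0,x]}(W_j-m).
\]
The first term has expected absolute value at most $m h_0(x)$.

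\emph{The mark-fluctuation term.} Condition on $X$; the summands are then independent, centred, with finite $r$-th moment, and the sum has $n$ of them. We use the von Bahr--Esseen inequality for $1<r\le2$:
\[
\E\Bigl|\sum_{j\le n}(W_j-m)\Bigr|^r\le 2n\,\E|W_1-m|^r.
\]
Jensen's inequality then gives a conditional first moment at most $C n^{1/r}$. Taking expectations and applying Jensen again, since $t\mapsto t^{1/r}$ is concave,
\[
\E n^{1/r}\le(\E n)^{1/r}=(\lambda x)^{1/r}.
\]
This yields $h(x)=mh_0(x)+Cx^{1/r}$. Negative $x$ is handled identically, using intervals $(x,0]$ and the same symmetric assumption from stationarity. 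Note that the hypothesis is stated only for $x\ge1$, and stationarity of $X$ transfers it to $|x|\ge1$.

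\emph{Integrability and the polynomial case.} Since $1/r<1$, we have $\int_1^\infty x^{1/r-2}\,dx<\infty$, so \eqref{eq:general-discrepancy} holds and Theorem~\ref{thm:stationary} applies. For $h_0=C_0(1+x^{\alpha_0})$, the bound $h\le C(1+|x|^{\alpha})$ holds for every $\alpha\ge\max(\alpha_0,1/r)$ with $\alpha<1$. For $|x|\le1$ we use $\E|N_M(x)-Kx|\le2K|x|$, which gives \eqref{eq:polynomial-first-moment}.

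\emph{Main obstacle.} The step needing care is the fluctuation bound without second moments. That is exactly why the von Bahr--Esseen inequality is used, rather than a variance computation. The randomness of $n$ must also be handled by conditioning on $X$, using the independence of the marks from $X$.
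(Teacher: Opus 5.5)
Your proposal is correct and follows the paper's proof essentially step by step. It uses the same split into $m(N-\lambda x)$ plus a conditional sum of centred marks, applies von Bahr--Esseen conditionally on $X$, handles the negative half-line by stationarity, and uses $\E N=\lambda x$ with concavity of $t\mapsto t^{1/r}$. The only cosmetic difference is that you take the $r$th root before averaging over $X$, whereas the paper averages first; both give the term $Cx^{1/r}$.
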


\begin{proof}
Independent identically distributed marking preserves stationarity, and conditioning on $X$ gives the intensity $\lambda m$. Let $N=X((0,x])$. Conditional on the point process,
\[
M((0,x])-\lambda mx=m(N-\lambda x)+\sum_{j=1}^N(W_j-m).
\]
The von Bahr--Esseen inequality, with the variance identity at $r=2$, gives
\[
\E\left[\left|\sum_{j=1}^N(W_j-m)\right|^r\,\middle|\,X\right]
\le C_r N\E|W_1-m|^r;
\]
see \cite{BahrEsseen}. Taking expectations and then the $r$th root bounds the first absolute moment of the sum by a constant times $x^{1/r}$, since $\E N=\lambda x$. This proves \eqref{eq:marked-discrepancy} on the positive half-line. Stationarity gives the same bound on the negative half-line. Since $r>1$, the added term is sublinear and integrable after division by $x^2$. The polynomial assertion follows by comparing the exponents.
\end{proof}

\begin{corollary}[Three stationary models]\label{cor:models}
The following transforms exist simultaneously at all nonatoms, are pathwise shift covariant and shift amenable, and have the indicated one-point laws.
\begin{enumerate}[label=(\alph*)]
\item If $X$ is a stationary $\operatorname{Sine}_\beta$ process of intensity $\lambda$, for fixed $\beta,\lambda>0$, then, for every deterministic $s$,
\[
\frac1{\pi\lambda}\lim_{A\to\infty}\sum_{x\in X\cap[-A,A]}\frac1{s-x}
\sim\Cauchy(0,1).
\]
Its centered symmetric truncation error in $L^1$ is $O_{\beta,\lambda}(\sqrt{\log(2+A)}/A)$.
\item In the normalization of the Najnudel--Vir\'ag bead process, the support is $\operatorname{Sine}_\beta$ of intensity $1/(2\pi)$, and the independent weights have distribution $(4/\beta)\operatorname{Gamma}(\beta/2,1)$, with Gamma parametrized by shape and scale. Their mean is two. For the resulting measure $M$, $K=1/\pi$ and
\[
\lim_{A\to\infty}\int_{[-A,A]}\frac{\mathrm dM(x)}{s-x}\sim\Cauchy(0,1).
\]
The centered truncation error is $O_\beta(A^{-1/2})$ in $L^1$.
\item For a Poisson process of intensity $\lambda$, the law in part (a) holds, and the centered truncation error is $O_\lambda(A^{-1/2})$ in $L^1$.
\end{enumerate}
\end{corollary}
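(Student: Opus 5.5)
All three parts will be reduced to Theorem~\ref{thm:stationary} (with Corollary~\ref{cor:polynomial} or Corollary~\ref{cor:marks}) plus the quantitative Theorem~\ref{thm:truncation}. The qualitative conclusions then follow at once: simultaneous existence at nonatoms, pathwise covariance, shift amenability via the spatial law \eqref{eq:spatial-cauchy}, and the one-point Cauchy law. So the only real work is to supply a first-moment discrepancy envelope $h$ in each model and then compute $T_h(A)$.

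\emph{Part (a).} A stationary $\operatorname{Sine}_\beta$ process of intensity $\lambda$ is a rescaling of the intensity-one process. I will use the known number-variance asymptotics $\operatorname{Var}X((0,x])=O_\beta(\log(2+x))$; for $\beta=2$ this is the classical sine-kernel computation, and for general $\beta$ it is the known logarithmic variance of $\operatorname{Sine}_\beta$ (e.g.\ from the circular $\beta$-ensemble limit or the Killip--Stoiciu/Valkó--Virág description). Stationarity gives $\E X((0,x])=\lambda x$, so Cauchy--Schwarz yields $\E|N(x)-\lambda x|\le C\sqrt{\log(2+x)}$. This $h$ satisfies \eqref{eq:general-discrepancy}, Theorem~\ref{thm:stationary} applies with $K=\lambda$, and the logarithmic case of Theorem~\ref{thm:truncation} gives the $L^1$ rate $O(\sqrt{\log(2+A)}/A)$. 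The main obstacle is quoting this variance bound for general $\beta$ uniformly in $x$, and not merely asymptotically. For the bounded range of $x$, I would fall back on the trivial bound $\E N\le \lambda x$, which suffices for $x\le 1$, and on finiteness of the second moment, which is needed for $x$ in compacts.

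\emph{Part (b).} Apply Corollary~\ref{cor:marks} with $X=\operatorname{Sine}_\beta$ of intensity $\lambda=1/(2\pi)$ and $h_0=C\sqrt{\log(2+x)}$ from part (a). The weights $(4/\beta)\operatorname{Gamma}(\beta/2,1)$ are i.i.d.\ and positive, with mean $(4/\beta)(\beta/2)=2$, and they have all moments, so $p=r=2$. Hence $K=\lambda m=1/\pi$, and the normalization $F_M/(\pi K)=F_M$ is standard Cauchy. The envelope is $h(x)=2h_0(x)+Cx^{1/2}$, and Theorem~\ref{thm:truncation} gives $T_h(A)=O(A^{-1/2})$, since the $x^{1/2}$ term dominates the logarithmic one. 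One must check that the marked measure matches the Najnudel--Virág normalization, in particular that the weights are independent of the support, which is part of that construction.

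\emph{Part (c).} For a Poisson process we have $\operatorname{Var}N(x)=\lambda|x|$. Corollary~\ref{cor:variance} with $\alpha=1/2$ then gives $h(x)=C(1+x^{1/2})$, and Theorem~\ref{thm:truncation} yields the Cauchy law and the $O(A^{\alpha-1})=O(A^{-1/2})$ rate.
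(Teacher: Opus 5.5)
Your proposal is correct and follows the paper's proof essentially step for step. It uses a logarithmic number-variance bound plus Cauchy--Schwarz for (a), Corollary~\ref{cor:marks} with $r=2$ for (b), and the Poisson variance $\lambda L$ for (c), all fed into Theorems~\ref{thm:stationary} and~\ref{thm:truncation}. The uniform-in-$x$ variance bound you flag as the main obstacle is exactly what the paper takes from \cite[Theorem~1]{Variance}, and the bead normalization comes from \cite[Theorem~9]{Bead}. Also, the envelope in \eqref{eq:general-discrepancy} is only required for $|x|\ge1$, so your fallback for $x\le1$ is unnecessary.
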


\begin{proof}
For a fixed rescaling of $\operatorname{Sine}_\beta$, the logarithmic number-variance estimate \cite[Theorem~1]{Variance} gives
\[
\operatorname{Var}X((0,L])\le C_{\beta,\lambda}\log(2+L).
\]
Cauchy--Schwarz supplies $h(L)=C_{\beta,\lambda}\sqrt{\log(2+L)}$. Theorem~\ref{thm:stationary} proves the law, recovering the marginal in \cite[Theorem~60]{ValkoVirag}; Theorem~\ref{thm:truncation} gives the stated explicit rate. The support intensity and weight normalization in (b) are those of \cite[Theorem~9]{Bead}. The Gamma variables have moments of every positive order, so Corollary~\ref{cor:marks} with $r=2$ supplies $h(L)=O_\beta(\sqrt L)$ for $L\ge1$, proving both assertions. Finally, a Poisson count of an interval of length $L$ has variance $\lambda L$, which yields $h(L)=\sqrt{\lambda L}$ and proves (c).
\end{proof}

Vague convergence of a sequence of point measures alone does not ensure convergence of the corresponding Stieltjes transforms: a point may approach the observation point, and distant points may contribute an unbalanced tail. The next theorem separates these two issues. 

\begin{theorem}[Cauchy transfer]\label{thm:transfer}
Let $M_n$ be positive random weighted point measures, and let $A_n\to\infty$. Suppose that:
\begin{enumerate}[label=(\roman*)]
\item $M_n$ converges vaguely in law to a stationary measure $M$ of intensity $K$ satisfying Theorem~\ref{thm:stationary};
\item the near-pole condition holds:
\begin{equation}\label{eq:transfer-near-pole}
\lim_{\delta\downarrow0}\limsup_{n\to\infty}
\P\bigl(M_n((-\delta,\delta))>0\bigr)=0;
\end{equation}
\item there is a finite nonnegative measurable $h$ on $[1,\infty)$ with
\[
h(x)=o(x),\qquad \int_1^\infty\frac{h(x)}{x^2}\,\mathrm dx<\infty,
\]
such that, uniformly for $1\le x\le A_n$,
\begin{equation}\label{eq:transfer-imbalance}
\E|M_n((0,x])-M_n([-x,0))|\le h(x).
\end{equation}
\end{enumerate}
Then
\begin{equation}\label{eq:transfer-conclusion}
\frac1{\pi K}\int_{0<|x|\le A_n}\frac{\mathrm dM_n(x)}x
\ \Longrightarrow\ \Cauchy(0,1).
\end{equation}
In particular, condition (iii) is satisfied by the bound $C(1+x^\alpha)$ with any $\alpha<1$.
\end{theorem}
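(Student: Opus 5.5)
Note the sign convention: $-\pv\int x^{-1}\,\mathrm dM(x)=F_M(0)$ and the Cauchy law is symmetric, so it suffices to prove $\int_{0<|x|\le A_n}x^{-1}\,\mathrm dM_n(x)\Rightarrow -F_M(0)$. Equivalently, we show $I_n:=\int_{0<|x|\le A_n}x^{-1}\,\mathrm dM_n(x)$ converges in law to $I:=\pv\int x^{-1}\,\mathrm dM(x)$; then Theorem~\ref{thm:stationary}(iv) at $s=0$ gives the conclusion. We proceed by a three-scale decomposition $0<\delta<B<A_n$ with $\delta,B$ fixed nonatoms of the mean measure, and write
\[
I_n=I_n^{\delta,B}+R_n^{B},\qquad I_n^{\delta,B}=\int_{\delta<|x|\le B}\frac{\mathrm dM_n(x)}x,\qquad R_n^B=\int_{B<|x|\le A_n}\frac{\mathrm dM_n(x)}x,
\]
with the analogous split for $M$. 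Since $M$ is stationary with mean measure $K\,\mathrm dx$, the origin is almost surely not an atom, and we take $\delta$ small. A standard approximation lemma (Billingsley, Theorem~3.2) then reduces the claim to three facts. First, $I_n^{\delta,B}\Rightarrow I^{\delta,B}$ as $n\to\infty$ for fixed $\delta,B$. Second, the near-pole part is negligible. Third, the tail $R_n^B$ is small in probability uniformly in $n$ as $B\to\infty$, and likewise for $M$.

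For the middle part, the map $\mu\mapsto\int_{\delta<|x|\le B}x^{-1}\,\mathrm d\mu$ is continuous at every $\mu$ which charges none of $\pm\delta,\pm B$. For the limit this holds almost surely, because fixed points are not atoms. The continuous mapping theorem therefore gives $I_n^{\delta,B}\Rightarrow I^{\delta,B}$. For the near-pole part, on the event $M_n((-\delta,\delta))=0$ we have exactly $I_n=I_n^{\delta,B}+R_n^B$. By \eqref{eq:transfer-near-pole}, the probability of the complementary event is at most $\varepsilon(\delta)\to0$ in the limsup. The analogous statement for $M$ holds because $\P(M((-\delta,\delta))>0)\to0$ as $\delta\downarrow0$, again since the origin is not an atom.

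The main step is the tail. Put $Q_n(x)=M_n((0,x])-M_n([-x,0))$. The exact identity \eqref{eq:imbalance-ibp} from Theorem~\ref{thm:truncation}, applied to $M_n$, gives
\[
R_n^B=\frac{Q_n(A_n)}{A_n}-\frac{Q_n(B)}{B}+\int_B^{A_n}\frac{Q_n(x)}{x^2}\,\mathrm dx .
\]
The identity holds with one-sided conventions at possible endpoint atoms, or after perturbing $B$ and $A_n$ slightly to nonatoms of the relevant marginals. Taking expectations and using \eqref{eq:transfer-imbalance} together with Tonelli, valid because the bound is uniform on $[1,A_n]$, we obtain
\[
\E|R_n^B|\le\frac{h(A_n)}{A_n}+\frac{h(B)}B+\int_B^\infty\frac{h(x)}{x^2}\,\mathrm dx.
\]
The first term tends to zero because $h(x)=o(x)$ and $A_n\to\infty$. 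The remaining terms tend to zero as $B\to\infty$, uniformly in $n$, because the integral converges and again $h=o(x)$. This is precisely where both hypotheses on $h$ are needed. For the limit $M$, Theorem~\ref{thm:truncation} already bounds $\E|I-I^{0,B}|$ by $T_g(B)\to0$. The delicate points are measurability of $Q_n$ at the deterministic endpoints and the possibility that atoms of $M_n$ sit exactly at $\pm A_n$. I expect to treat both by using closed and open intervals consistently with the statement of \eqref{eq:transfer-imbalance}, so this part is bookkeeping rather than a real obstacle.

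Combining these three facts, letting $n\to\infty$, then $B\to\infty$, then $\delta\downarrow0$, yields $I_n\Rightarrow I$ and hence \eqref{eq:transfer-conclusion}. The last sentence of the theorem is immediate: $h(x)=C(1+x^\alpha)$ with $\alpha<1$ is $o(x)$, and $h(x)/x^2$ is integrable on $[1,\infty)$.
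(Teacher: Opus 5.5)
Your proposal is correct and is essentially the paper's proof. Both use continuous mapping on $\{\delta<|x|\le B\}$, removal of the near-pole region via (ii), and the tail bound from the exact identity \eqref{eq:imbalance-ibp} with (iii) for $M_n$ and Theorem~\ref{thm:truncation} for $M$, combined by a converging-together argument. One small slip: on $\{M_n((-\delta,\delta))=0\}$ the equality $I_n=I_n^{\delta,B}+R_n^B$ can fail because of atoms of $M_n$ at $\pm\delta$; the paper avoids this by working on $\{M_n((-2\delta,2\delta))=0\}$. Also, no perturbation of $A_n$ is needed, since \eqref{eq:imbalance-ibp} is exact with the interval conventions of (iii).
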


\begin{proof}
Put $Q_n(x)=M_n((0,x])-M_n([-x,0))$. For fixed $A>1$ and $n$ large enough that $A_n>A$, the identity \eqref{eq:imbalance-ibp} gives
\begin{equation}\label{eq:transfer-tail-bound}
\E\left|\int_{A<|x|\le A_n}\frac{\mathrm dM_n(x)}x\right|
\le\frac{h(A_n)}{A_n}+\frac{h(A)}A+\int_A^{A_n}\frac{h(x)}{x^2}\,\mathrm dx.
\end{equation}
Here sublinearity controls the imposed endpoint $h(A_n)/A_n$. The upper limit as $n\to\infty$ of the bound tends to zero as $A\to\infty$.

Fix $0<\delta<A$. The limiting measure almost surely has no atoms at the deterministic points $\pm\delta,\pm A$. The compactly supported kernel $x^{-1}\mathbf1_{\{\delta<|x|\le A\}}$ is bounded and continuous away from these four points. The continuous mapping theorem for vague convergence consequently gives
\[
\int_{\delta<|x|\le A}\frac{\mathrm dM_n(x)}x
\ \Longrightarrow\ \int_{\delta<|x|\le A}\frac{\mathrm dM(x)}x.
\]
The difference between the integral with $0<|x|\le A$ and this one vanishes whenever $M_n((-2\delta,2\delta))=0$. The factor two includes possible atoms at $\pm\delta$. Applying \eqref{eq:transfer-near-pole} with $2\delta$ removes the singular neighborhood in probability as first $n\to\infty$ and then $\delta\downarrow0$. For the limiting measure the same removal holds almost surely: its support is locally finite and does not contain zero, so a random neighborhood of zero is empty. Thus, for fixed $A$,
\[
\int_{0<|x|\le A}\frac{\mathrm dM_n(x)}x
\ \Longrightarrow\ \int_{0<|x|\le A}\frac{\mathrm dM(x)}x.
\]
This last conclusion can also be read directly from bounded Lipschitz tests, whose difference is at most twice the probability of a nonempty singular neighborhood.

Now use \eqref{eq:transfer-tail-bound} to let $A\to\infty$. The limiting symmetric integral converges to $-F_M(0)$; Theorem~\ref{thm:truncation} gives convergence of its tails in $L^1$. The converging-together argument therefore identifies the limit as $-F_M(0)/(\pi K)$, which is standard Cauchy by Theorem~\ref{thm:stationary} and symmetry.
\end{proof}

\begin{corollary}[Automatic exclusion of near poles]\label{cor:transfer-counting}
In Theorem~\ref{thm:transfer}, assumption (ii) follows from (i) if the masses of all atoms of all $M_n$ are bounded below by one common constant $c>0$. In particular, it is automatic for integer-valued point measures.
\end{corollary}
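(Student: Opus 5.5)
The plan is to deduce (ii) from vague convergence in law, using a portmanteau argument on an open set together with the absence of atoms of $M$ at $0$. Suppose every atom of every $M_n$ has mass at least $c>0$. Then the event $M_n((-\delta,\delta))>0$ coincides with $M_n((-\delta,\delta))\ge c$. The map $\mu\mapsto\mu((-\delta,\delta))$ is lower semicontinuous for the vague topology, so $U_\delta=\{\mu:\mu((-\delta,\delta))>c/2\}$ is not open. We therefore use a closed set instead. For $\delta'>\delta$, the set
\[
F_{\delta'}=\{\mu:\mu([-\delta',\delta'])\ge c\}
\]
is vaguely closed, because $\mu\mapsto\mu(K)$ is upper semicontinuous on compact $K$. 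It contains every $M_n$ with $M_n((-\delta,\delta))>0$.

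The portmanteau theorem for convergence in law on the Polish space of locally finite measures then gives
\[
\limsup_{n\to\infty}\P\bigl(M_n((-\delta,\delta))>0\bigr)
\le\limsup_{n\to\infty}\P(M_n\in F_{\delta'})
\le\P\bigl(M([-\delta',\delta'])\ge c\bigr).
\]
Letting $\delta\downarrow0$ with, say, $\delta'=2\delta$, the right side tends to $\P(M(\{0\})\ge c)$ by monotone continuity. Since $M$ is stationary with intensity $K<\infty$, its mean measure is $K\,\mathrm dx$, so $M(\{0\})=0$ almost surely. Hence the limit is zero, which is (ii).

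For integer-valued point measures every atom has mass at least one, so $c=1$ applies. The only delicate point is the semicontinuity direction. Measures that are positive on the open interval are detected by a closed condition on a slightly larger closed interval, precisely because of the uniform lower bound on atom masses. Without that bound, small atoms could vanish in the vague limit, and the argument would fail.
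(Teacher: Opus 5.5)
Your argument is correct and is essentially the paper's: both bound $\P(M_n((-\delta,\delta))>0)$ by the probability that a closed interval about $0$ carries mass at least $c$, apply the Portmanteau theorem to a closed set, and let $\delta\downarrow0$ using $M(\{0\})=0$. The paper applies Portmanteau to the real variables $M_n([-\delta,\delta])\Rightarrow M([-\delta,\delta])$, which needs the absence of limiting atoms at $\pm\delta$; you apply it directly to the vaguely closed set $F_{\delta'}$, which avoids that step. One slip: lower semicontinuity makes $U_\delta$ open, not ``not open''; the real reason to avoid $U_\delta$ is that the open-set Portmanteau inequality gives a lower bound, the wrong direction, but this remark plays no role in your proof.
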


\begin{proof}
For fixed $\delta>0$, vague convergence and absence of limiting atoms at $\pm\delta$ imply
$M_n([-\delta,\delta])\Longrightarrow M([-\delta,\delta])$. An atom of $M_n$ in the open interval forces its mass on the closed interval to be at least $c$. The Portmanteau theorem yields
\[
\limsup_{n\to\infty}\P(M_n((-\delta,\delta))>0)
\le\P(M([-\delta,\delta])\ge c).
\]
As $\delta\downarrow0$, the masses on the right decrease almost surely to $M(\{0\})=0$. This proves the claim.
\end{proof}

\begin{remark}\label{rem:transfer-limitations}
The lower bound on atom weights in Corollary~\ref{cor:transfer-counting} is substantive. For example, start with a uniformly translated unit lattice $M$ and set $M_n=M+n^{-1}\delta_{n^{-2}}$. The added atom disappears vaguely and changes the imbalance in \eqref{eq:transfer-imbalance} by at most $n^{-1}$ for $x\ge1$, but contributes $n$ to the transform. Condition \eqref{eq:transfer-near-pole} fails.
\end{remark}

\section{Relation with the Aizenman--Warzel theory}\label{sec:analytic}

The qualitative Cauchy law belongs to a general scalar Herglotz--Pick theory. We recall the relevant statement to distinguish it from the quantitative construction above.

\begin{proposition}[Aizenman--Warzel]\label{prop:aw}
Let $G$ be a Herglotz--Pick function with real boundary values almost everywhere. Suppose its boundary function is shift amenable and
\[
 G(\ii\eta)\longrightarrow a+\ii b\qquad(\eta\to\infty),
 \qquad b\ge0.
\]
Then its boundary shift distribution is $\Cauchy(a,b)$, with $b=0$ interpreted as the point mass at $a$.
\end{proposition}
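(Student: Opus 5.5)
The plan is to identify the boundary shift distribution through its characteristic function, using harmonic functions of the form $e^{\ii t G}$ and the mean-value property along horizontal lines. Write $G(x+\ii\eta)$ for $\eta>0$ and $G(x)$ for the almost-everywhere real boundary values. Fix $t>0$ and consider $\phi_t(z)=e^{\ii tG(z)}$. Since $\operatorname{Im}G\ge0$ on the upper half-plane, $|\phi_t|\le1$, so $\phi_t$ is a bounded holomorphic function. It is therefore the Poisson integral of its boundary values $e^{\ii tG(x)}$, which exist almost everywhere by Fatou's theorem and agree with $e^{\ii t\cdot(\text{real boundary value})}$. For $t<0$ I would instead conjugate, using $\overline{\phi_{|t|}}$. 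This reduces the characteristic function at $-t$ to the one at $t$, which accounts for the factor $|t|$.

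The main identity to establish is the following. For a bounded holomorphic $\phi$ on the upper half-plane with boundary function $\phi(x)$, the spatial averages
\[
\frac1L\int_{-L/2}^{L/2}\phi(x)\,\mathrm dx
\]
converge to the same limit as $\phi(\ii\eta)$ as $\eta\to\infty$, provided the spatial averages converge at all. I would proceed as follows. Poisson representation gives
\[
\phi(x+\ii\eta)=\int P_\eta(x-y)\phi(y)\,\mathrm dy.
\]
Averaging over $x\in[-L/2,L/2]$ with $L\gg\eta$ shows that the horizontal average at height $\eta$ equals the boundary average up to an error $O(\eta/L)$, because $P_\eta$ has mass one and is concentrated at scale $\eta$. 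Shift amenability guarantees that the boundary averages of the bounded continuous function $e^{\ii tG(x)}$ converge to $\int e^{\ii tu}\,\mathrm d\nu(u)$, where $\nu$ is the shift distribution. Hence the horizontal averages at every fixed height $\eta$ converge to the same limit. It then remains to show that this common limit equals $\lim_{\eta\to\infty}\phi(\ii\eta)=e^{\ii t(a+\ii b)}=e^{\ii ta-tb}$.

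This comparison is the main obstacle, because the convergence $G(\ii\eta)\to a+\ii b$ holds only on the imaginary axis, not uniformly along the horizontal line $x+\ii\eta$. To handle it I would use a Herglotz estimate. Write
\[
G(z)=\alpha+\beta z+\int\left(\frac1{y-z}-\frac y{1+y^2}\right)\mathrm d\mu(y).
\]
A finite limit at imaginary infinity forces $\beta=0$, as in Remark~\ref{rem:zero-defect}. Next I would bound the horizontal derivative $|G'(z)|\le\operatorname{Im}G(z)/\operatorname{Im}z$, which holds for Pick functions. This gives, for $|x|\le\varepsilon\eta$,
\[
|G(x+\ii\eta)-G(\ii\eta)|\le C\varepsilon\sup\operatorname{Im}G,
\]
and the supremum is bounded near the axis by Harnack's inequality. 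So $\phi_t(x+\ii\eta)$ is close to $e^{\ii ta-tb}$ on the window $|x|\le\varepsilon\eta$. To go from this local window to averages over huge $x$-ranges, I would apply the Poisson step once more, now between heights $\eta$ and $\eta'\gg L$. The horizontal average over $[-L/2,L/2]$ at height $\eta'$ is approximately $\phi_t(\ii\eta')$ if $L\le\varepsilon\eta'$. On the other hand, the harmonic extension to height $\eta'$ of the spatially amenable boundary data tends to its spatial mean as $\eta'\to\infty$, since $P_{\eta'}$ is a wide average and the boundary averages converge. Letting $\eta'\to\infty$ therefore identifies $\int e^{\ii tu}\,\mathrm d\nu=e^{\ii ta-|t|b}$.

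This is the characteristic function of $\Cauchy(a,b)$, and of $\delta_a$ when $b=0$, which completes the identification.
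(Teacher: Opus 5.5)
The paper gives no proof of this proposition: it imports it from \cite[Theorem~2.3 and Lemma~2.5]{AW}, and uses it only as a qualitative cross-check on its own route, which is periodization plus the cotangent identity. Your argument is a correct self-contained proof, and its first step is the right engine. For $t>0$, $e^{\ii tG}$ is bounded holomorphic, so $e^{\ii tG(x_0+\ii\eta)}=\int_\R P_\eta(x_0-y)e^{\ii tG(y)}\,\mathrm dy$. This is the Fourier form of the classical fact that the boundary map $y\mapsto G(y)$ pushes the Cauchy law $P_\eta(x_0-y)\,\mathrm dy$ forward to $\Cauchy(\operatorname{Re}G(z),\operatorname{Im}G(z))$, where $z=x_0+\ii\eta$.

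Most of the rest, however, is a detour. The step that actually closes your argument is the Abelian claim in your last paragraph, and it suffices on its own at the single point $\ii\eta$. The claim is: if $f$ is bounded and $(2r)^{-1}\int_{-r}^rf\to c$, then $\int_\R P_\eta f\to c$ as $\eta\to\infty$. With $f=e^{\ii tG}$ on $\R$ this gives $e^{\ii tG(\ii\eta)}\to\int e^{\ii tu}\,\mathrm d\nu(u)$, while the hypothesis gives $e^{\ii tG(\ii\eta)}\to e^{\ii ta-tb}$. The following steps are all correct, but none of them is needed: the fixed-height horizontal averages, the removal of the linear Herglotz term, the bound $|G'|\le\operatorname{Im}G/\operatorname{Im}z$, and the Harnack window.

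What you should write out instead is that Abelian step, which you justify only by saying that $P_{\eta'}$ is a wide average. Shift amenability controls only windows centred at the origin, so use that $P_\eta$ is even and decreasing in $|y|$. Put $F(r)=\int_{-r}^rf$. The boundary term in the integration by parts vanishes, because $|F(r)|\le2r\|f\|_\infty$ and $P_\eta(r)=O(r^{-2})$. Integration by parts then gives
\[
\int_\R P_\eta(y)f(y)\,\mathrm dy
=\int_0^\infty\frac{F(r)}{2r}\,\frac{4\eta r^2}{\pi(r^2+\eta^2)^2}\,\mathrm dr
=\int_0^\infty\frac{F(\eta s)}{2\eta s}\,\frac{4s^2}{\pi(1+s^2)^2}\,\mathrm ds.
\]
The last weight is a fixed probability density, so bounded convergence gives the limit $c$.

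There is also a minor inaccuracy. In your fixed-height comparison the error is $O(\eta L^{-1}\log(L/\eta))$, not $O(\eta/L)$, because of the Cauchy tails of $P_\eta$. This is harmless, and that step can be dropped anyway.
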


This is \cite[Theorem~2.3 and Lemma~2.5]{AW}. Under our discrepancy condition, $G_M=-F_M$ has real boundary values and $G_M(\ii\eta)\to\ii\pi K$. Birkhoff's theorem, applied to bounded observables integrated over a unit interval, gives shift amenability. Thus the principle recovers the stationary law and, componentwise, its scale mixtures. Our pathwise hypotheses also specialize \cite[Theorem~4.1]{AW} to the reference measure $K\dd x$.

Periodicity gives shift amenability directly. Related formulations are \cite[Proposition~2.6]{AW} and, for circular $\beta$-ensembles, \cite[Remark~3.2(1)]{Forrester}. The finite conditional identity supplies our approximating law; periodization and truncation estimates control transfer and field convergence. For zeta zeros, the principle applies after horizontal projection, whereas comparison with the actual logarithmic derivative requires Section~\ref{sec:correction}. The Riemann-hypothesis case is due to Sodin \cite[Section~3.4.5, equation~(55)]{Sodin}.

\section{Projected ordinates of zeta zeros}\label{sec:projected}

In this section and the next one, we apply the previous results to the setting of the Riemann zeta function. 
Let $\rho=\beta+\ii\gamma$ range over the distinct nontrivial zeros of $\zeta$, and let $m(\rho)$ denote the multiplicity of $\rho$. Throughout this section, for $T \geq 100$, let $L=\log T$ and let $t_T$ be uniform on
\[
 I_T=[T/L,T].
\]
We consider the random point measure
\begin{equation}\label{eq:zeta-projected-measure}
 u_{\rho,T}=\frac{(\gamma-t_T)L}{2\pi},
 \qquad M_T=\sum_{\rho}m(\rho)\delta_{u_{\rho,T}}.
\end{equation}
Zeros with the same ordinate form a single atom whose mass is their total multiplicity. In particular, no simplicity assumption is made.

\begin{theorem}[Unconditional projected-ordinate law]\label{thm:projected}
Without assuming the Riemann hypothesis,
\begin{equation}\label{eq:zeta-projected-law}
 P_T:=-\frac1\pi\sum_{0<|u_{\rho,T}|\le L^2}
          \frac{m(\rho)}{u_{\rho,T}}
 \ \Longrightarrow\ \Cauchy(0,1).
\end{equation}
\end{theorem}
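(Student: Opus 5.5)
The plan is to deduce the statement from Theorem~\ref{thm:transfer}, applied to $M_T$ from \eqref{eq:zeta-projected-measure} with $A_T=L^2$, through a subsequence argument. We do not know unconditionally that $M_T$ has a unique vague limit. It therefore suffices to show two things. First, every sequence $T_k\to\infty$ has a subsequence along which $M_{T_k}$ converges vaguely in law to some stationary $M$ of intensity $K=1$ satisfying Theorem~\ref{thm:stationary}. Second, hypotheses (ii) and (iii) of Theorem~\ref{thm:transfer} hold uniformly in $T$. Then along each such subsequence $-\pi^{-1}\int_{0<|x|\le L^2}x^{-1}\,\mathrm dM_T(x)=P_T$ converges to $\Cauchy(0,1)$. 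The sign is irrelevant because the law is symmetric. Since every subsequence has a sub-subsequence with the same limit, the full family converges.

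\emph{Step 1: tightness and stationarity of limits.} By the Riemann--von Mangoldt formula, $N(t+h)-N(t)=\frac{h}{2\pi}\log\frac{t}{2\pi}+S(t+h)-S(t)+O(1/t)$, and $S(t)=O(\log t)$. Hence $M_T([-r,r])$ has uniformly bounded expectation for each fixed $r$, so $(M_T)$ is vaguely tight. For stationarity, a shift $u\mapsto u+c$ corresponds to replacing $t_T$ by $t_T+2\pi c/L$. The total variation distance between the uniform law on $I_T$ and its translate is $O(L^{-1}\cdot L/T)=o(1)$, so every subsequential limit is stationary. The intensity is computed from $\E M_T((0,x])=x\,\E[\log(t_T/2\pi)/L]+O(\E|S|\text{-differences})/\!\cdot$. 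For fixed $x$ this tends to $x$, because $\log t_T=L+O(\log L)$ on $I_T$ and averaged $S$-increments over short windows are $O(\sqrt{\log(2+x)})$. By uniform integrability from the second-moment bound below, the limit has $K=1$.

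\emph{Step 2: discrepancy bounds (the main obstacle).} I will use the unconditional Selberg--Fujii estimate for increments of $S$ on short intervals,
\[
\frac1T\int_{T/L}^{T}\bigl|S(t+h)-S(t)\bigr|^2\,\mathrm dt\le C\log\bigl(2+hL\bigr),\qquad 0<h\le T,
\]
with $h=2\pi x/L$. This gives $\E|M_T((0,x])-x\log(t_T/2\pi)/L|\le C\sqrt{\log(2+x)}$, uniformly in $T$, and by Fatou the same bound for every subsequential limit $M$. Because $\log(t_T/2\pi)/L$ is not exactly $1$, the limit hypothesis \eqref{eq:general-discrepancy} holds with $h(x)=C\sqrt{\log(2+x)}$. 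For the prelimit we need only the symmetric imbalance \eqref{eq:transfer-imbalance}. In $Q_T(x)=M_T((0,x])-M_T([-x,0))$ the main terms are $\frac{1}{2\pi}\bigl[\theta(t+h)-2\theta(t)+\theta(t-h)\bigr]$, and this is $O(h^2/t)=O(x^2/(L^2T/L))=o(1)$ uniformly for $x\le L^2$. Consequently $\E|Q_T(x)|\le C\sqrt{\log(2+x)}+o(1)$ for $1\le x\le L^2$, which is $o(x)$ and integrable against $x^{-2}$. The delicate point is checking that the cited increment estimate is genuinely unconditional and uniform for $h$ ranging up to $2\pi L$ over the non-dyadic window $[T/L,T]$. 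If only a dyadic version is available, I would split $I_T$ into $O(\log L)$ dyadic blocks and average, where the weights are proportional to block lengths.

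\emph{Step 3: near poles and conclusion.} The atoms of $M_T$ have integer masses, so Corollary~\ref{cor:transfer-counting} gives hypothesis (ii) from vague convergence. The limit satisfies Theorem~\ref{thm:stationary} by Step 2 (indeed Corollary~\ref{cor:polynomial} applies with any $\alpha>0$). Theorem~\ref{thm:transfer} then yields the Cauchy limit along each convergent subsequence, and the subsequence argument completes the proof.
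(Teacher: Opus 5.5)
This is correct and is essentially the paper's proof: subsequential vague limits, stationarity from the total-variation shift bound, Fujii--Selberg increment estimates for the discrepancy, Theorem~\ref{thm:transfer} with cutoff $L^2$, and the sub-subsequence argument. The differences are minor: you cancel the smooth drift through the second difference in $Q_T$ rather than absorbing $|x|\log L/L$ into $|x|^{2/3}$, and you get hypothesis (ii) from integer masses via Corollary~\ref{cor:transfer-counting} rather than from Markov's inequality. Also, your bound on $\E|S(t_T+h)-S(t_T)|$ does not by itself give $\E M_T((0,x])\to x$ (that needs the signed translation average of $S$, as in the paper), but the resulting bound $|Kx-x|\le C\sqrt{\log(2+x)}$ for the stationary limit already forces $K=1$.
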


We first record the counting estimates used in the proof. Let $N(v)$ count the nontrivial zeros with ordinates in $(0,v]$, with multiplicity, and extend it oddly to negative $v$. The Riemann--von Mangoldt formula gives
\begin{equation}\label{eq:zeta-counting-formula}
 N(v)=\Phi(v)+S(v)+O(1),
 \qquad
 \Phi(v)=\frac1{2\pi}\int_0^v\log^+\frac{|y|}{2\pi}\,\dd y,
\end{equation}
where $S(v)=O(\log(2+|v|))$; endpoint conventions at zero ordinates have no effect on any integral below.

We shall use the following two-sided increment estimate, uniformly for $T\ge100$ and $h\in\R$:
\begin{equation}\label{eq:zeta-fujii-global}
 \int_0^T|S(t+h)-S(t)|^2\,\dd t
 \ll T\log^2\bigl(2+|h|\log(2+T)\bigr).
\end{equation}
Here is a derivation which specifies the range of the input. Fujii's estimate, in its corrected small-shift range, gives
\[
 \int_X^{2X}|S(t+h)-S(t)|^2\,\dd t
 \ll X\log(2+h\log X),\qquad 0<h\le1,
\]
for $X\ge10$; see \cite{Fujii1974,Fujii1981,Fujii1999}, or \cite[Theorem~2.2 and its accompanying footnote]{MaplesRodgers}. For $1<h\le X/2$, Selberg's mean-square estimate
\[
 \int_X^{3X}|S(t)|^2\,\dd t\ll X\log\log X
\]
(see \cite{Selberg,Titchmarsh}) gives the same upper bound, since $\log\log X\ll\log(2+h\log X)$. A change of variables and a cover by a bounded number of intervals $[Y,2Y]$, with $Y\asymp X$, give the corresponding bound for $-X/3\le h<0$. If $|h|>X/3$, the pointwise estimate for $S$ instead gives
\[
 \int_X^{2X}|S(t+h)-S(t)|^2\,\dd t
 \ll X\log^2\bigl(2+|h|\log(2+X)\bigr).
\]
Thus the squared logarithm is needed only for large shifts. Summing over the dyadic intervals making up $[0,T]$, and absorbing the final bounded interval, proves \eqref{eq:zeta-fujii-global}. All these estimates count zeros throughout the critical strip and are unconditional.

\begin{lemma}[Counting and tail estimates]\label{lem:counting}
Uniformly for $|x|\le L^2$,
\begin{equation}\label{eq:zeta-counting-second-moment}
 \E|N_{M_T}(x)-x|^2\ll 1+|x|^{4/3}.
\end{equation}
For every fixed $a<b$,
\begin{equation}\label{eq:zeta-counting-mean}
 \E M_T((a,b])\longrightarrow b-a.
\end{equation}
Moreover, for every fixed $0<\delta<1$,
\begin{equation}\label{eq:zeta-near-pole}
 \P\bigl(M_T((-\delta,\delta))>0\bigr)\ll\delta+o(1),
\end{equation}
and, uniformly for $1\le A<B\le L^2$,
\begin{equation}\label{eq:zeta-projected-tail}
 \E\left|\int_{A<|u|\le B}\frac{\dd M_T(u)}{u}\right|
 \ll A^{-1/3}.
\end{equation}
\end{lemma}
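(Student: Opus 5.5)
We work throughout with the representation
\[
N_{M_T}(x)=N\Bigl(t_T+\tfrac{2\pi x}{L}\Bigr)-N(t_T),
\]
which holds for $x\ge0$ and, with the sign convention of the Introduction, for $x<0$. We insert \eqref{eq:zeta-counting-formula} and treat the smooth part $\Phi$ and the fluctuating part $S$ separately. Since $t_T$ has a density, every deterministic $x$ is almost surely not an atom of $M_T$, so endpoint conventions cause no difficulty.

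\emph{Main term.} Put $h=2\pi x/L$, so that $|h|\le 2\pi L$ and $t_T\ge T/L$. A Taylor expansion gives
\[
\Phi(t_T+h)-\Phi(t_T)=\frac{h}{2\pi}\log\frac{t_T}{2\pi}+O\Bigl(\frac{h^2}{t_T}\Bigr)
= x\Bigl(1-\frac{\log(2\pi T/t_T)}{L}\Bigr)+O\Bigl(\frac{L^3}{T}\Bigr).
\]
Because $t_T$ is uniform on $[T/L,T]$, the variable $\log(T/t_T)$ has bounded moments uniformly in $T$; indeed its density is $O(e^{-u})$. The smooth discrepancy therefore has second moment $\ll x^2/L^2$. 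For $|x|\le L^2$ this is at most $|x|^{4/3}$, because $|x|^{2/3}\le L^{4/3}\le L^2$.

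\emph{Fluctuating term.} Extend the integral over $I_T$ to $[0,T]$ and apply \eqref{eq:zeta-fujii-global} with $|I_T|\asymp T$. This gives
\[
\E|S(t_T+h)-S(t_T)|^2\ll\log^2\bigl(2+|h|\log(2+T)\bigr)\ll\log^2(2+|x|)\ll 1+|x|^{4/3}.
\]
The $O(1)$ terms are harmless, and together these bounds prove \eqref{eq:zeta-counting-second-moment}.

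\emph{Mean count.} For \eqref{eq:zeta-counting-mean} the second moment is too crude, since the $S$-increment contributes $O(\log(2+|x|))$, which is not $o(1)$. Instead we use first-moment cancellation. By the main-term computation, the expected smooth increment is $(b-a)\bigl(1+O(1/L)\bigr)$. For the fluctuation, $\E[S(t_T+h)-S(t_T)]$ equals $|I_T|^{-1}$ times a difference of integrals of $S$ over two short end intervals of length $|h|$. We bound these either by Littlewood's estimate $\int_0^X S=O(\log X)$ or directly by $|h|\max|S|\ll\log T/L$. Either way the contribution is $O(\log T/T)$, hence $o(1)$. The $O(1)$ term in \eqref{eq:zeta-counting-formula} must also average to $o(1)$. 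This is the delicate point: we will use the precise form $N=\Phi+S+7/8+O(1/t)$, whose constant cancels in increments. Then \eqref{eq:zeta-near-pole} follows at once, because atoms have integer mass: Markov's inequality gives $\P(M_T((-\delta,\delta))>0)\le\E M_T((-\delta,\delta))\to2\delta$.

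\emph{Tail estimate.} For \eqref{eq:zeta-projected-tail}, set $Q(x)=M_T((0,x])-M_T([-x,0))$. Since the reference terms $x$ and $-x$ cancel, we have $|Q(x)|\le|N_{M_T}(x)-x|+|N_{M_T}(-x)+x|$. Cauchy--Schwarz and \eqref{eq:zeta-counting-second-moment} then give $\E|Q(x)|\ll x^{2/3}$ for $1\le x\le L^2$. Applying the identity \eqref{imbalance-ibp} on $(A,B]$ and taking expectations yields
\[
\E\Bigl|\int_{A<|u|\le B}\frac{\dd M_T(u)}{u}\Bigr|\ll B^{-1/3}+A^{-1/3}+\int_A^\infty x^{-4/3}\,\dd x\ll A^{-1/3}.
\]

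\emph{Main obstacle.} We expect the hardest step to be the uniformity of the $S$-increment bound over the whole range $|x|\le L^2$, together with the $o(1)$ control of the mean. Both rest on \eqref{eq:zeta-fujii-global} and on exact cancellation of the bounded term in the Riemann--von Mangoldt formula. Everything else is bookkeeping.
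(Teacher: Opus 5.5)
Your proposal is correct and follows essentially the same route as the paper. It uses the Riemann--von Mangoldt split, the mean-square increment bound \eqref{eq:zeta-fujii-global} for $S$, the translation argument for the mean, Markov's inequality with integer atoms, and integration by parts for the tail. Averaging the smooth error over $t_T$ to get $\ll x^2/L^2$, instead of using the paper's uniform $O(|x|\log L/L)$ bound, changes nothing.

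The step you flag as delicate is not delicate. The paper applies the translation argument directly to $V=N-\Phi=O(\log(2+|v|))$, which disposes of the bounded term without any precise form of the formula. Also, with this $\Phi$ the constant would be $-1/8$, not $7/8$, though only its constancy matters.
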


\begin{proof}
For $x\ge0$, almost surely,
\[
 N_{M_T}(x)=N\left(t_T+\frac{2\pi x}{L}\right)-N(t_T),
\]
and the analogous identity holds for the signed count when $x<0$. The smooth part of \eqref{eq:zeta-counting-formula} satisfies, uniformly in $t\in I_T$ and $|x|\le L^2$,
\[
 \Phi\left(t+\frac{2\pi x}{L}\right)-\Phi(t)
 =x+O\left(\frac{|x|\log L}{L}\right).
\]
Here $|x|\log L/L\le |x|^{2/3}\log L/L^{1/3}$, since $|x|\le L^2$, so the error is $O(|x|^{2/3})$. Since $|I_T|\asymp T$, \eqref{eq:zeta-fujii-global}, with $h=2\pi x/L$, bounds the mean square of the $S$-increment by $O(\log^2(2+|x|))$. This proves \eqref{eq:zeta-counting-second-moment}. Any exponent $\alpha\in(1/2,1)$ could similarly replace $2/3$.

For the mean, write $V(v)=N(v)-\Phi(v)$, so that $V(v)=O(\log(2+|v|))$. If $h=O(1/L)$ is fixed at the microscopic scale, translation of the integral gives
\[
 \left|\int_{I_T}\{V(t+h)-V(t)\}\,\dd t\right|
 \ll |h|\log T.
\]
Indeed, the two integrals cancel on the intersection of $I_T$ and $I_T+h$, leaving two intervals of length $|h|$. Dividing by $|I_T|$ and applying the preceding smooth estimate shows that $\E N_{M_T}(x)\to x$ for each fixed $x$. Taking the difference at $a$ and $b$ proves \eqref{eq:zeta-counting-mean}. Since every nonzero atom of $M_T$ has mass at least one, Markov's inequality gives \eqref{eq:zeta-near-pole}.

For $x\ge0$, put
\[
 Q_T(x)=M_T((0,x])-M_T([-x,0)).
\]
The Lebesgue reference terms cancel, and \eqref{eq:zeta-counting-second-moment} gives
\[
 \E|Q_T(x)|\ll 1+x^{2/3},\qquad x\le L^2.
\]
Stieltjes integration by parts yields, almost surely for deterministic $A$ and $B$,
\[
 \int_{A<|u|\le B}\frac{\dd M_T(u)}u
 =\frac{Q_T(B)}B-\frac{Q_T(A)}A
      +\int_A^B\frac{Q_T(x)}{x^2}\,\dd x.
\]
Taking expectations proves \eqref{eq:zeta-projected-tail}.
\end{proof}

\begin{proof}[Proof of Theorem~\ref{thm:projected}]
For every fixed $A$, Lemma~\ref{lem:counting} gives
\[
 \sup_{T\text{ sufficiently large}}\E M_T([-A,A])<\infty.
\]
Markov's inequality on an increasing sequence of compact intervals gives tightness for vague convergence of locally finite measures. Along a convergent subsequence, write $M_T\Longrightarrow M$. Since the $M_T$ are integer-valued point measures, their locally finite vague limit is also an integer-valued point measure.

With our convention $\theta_hM(B)=M(B+h)$, translating $M_T$ by a fixed $h$ amounts to replacing $t_T$ by $t_T+2\pi h/L$. The total variation distance between the shifted uniform distribution and the original distribution on $I_T$ is $O(|h|/(TL))$. Every subsequential limit $M$ is therefore stationary.

For each bounded interval $J$, the second moments of $M_T(J)$ are uniformly bounded, by writing its mass as a difference of signed counts based at zero and using \eqref{eq:zeta-counting-second-moment}. Consequently $M_T(f)$ is uniformly integrable for every continuous compactly supported $f$. Approximation by step functions in \eqref{eq:zeta-counting-mean} gives
\[
 \E M_T(f)\longrightarrow\int_\R f(x)\,\dd x.
\]
Vague convergence and uniform integrability imply $\E M(f)=\int f(x)\,\dd x$. Thus $M$ has intensity one, and it almost surely has no atom at any prescribed point. For each fixed $x$, convergence of the interval masses and Fatou's lemma now give
\[
 \E|N_M(x)-x|^2\ll1+|x|^{4/3}.
\]
Corollary~\ref{cor:variance} applies, and hence
\[
 -\frac1\pi\pv\int_\R\frac{\dd M(u)}u
 \sim\Cauchy(0,1).
\]
Finally, \eqref{eq:zeta-near-pole} controls the pole at zero, while Cauchy--Schwarz in \eqref{eq:zeta-counting-second-moment} supplies the discrepancy bound $h(x)=C(1+x^{2/3})$ uniformly up to $L^2$. Theorem~\ref{thm:transfer}, with cutoff $L^2$, therefore proves \eqref{eq:zeta-projected-law} along the chosen subsequence. Every subsequence admits a further subsequence with this same limit, which proves the assertion for the full family. The same proof permits any deterministic cutoff $b_T\to\infty$ with $b_T\le L^2$; the choice $L^2$ also gives an $O(L^{-1})$ product tail in Appendix~\ref{app:factorization}.
\end{proof}

\section{The off-critical correction}\label{sec:correction}

We first derive the correction by pairing functional-equation partners. Selberg's estimate then relates the horizontal moment conditions, and fractional-moment and weak-type bounds control the correction.

We retain the notation of Section~\ref{sec:projected}. For a zero $\rho=\beta+\ii\gamma$, define its complex microscopic position by
\begin{equation}\label{eq:zeta-complex-position}
 z_{\rho,T}=\frac{L}{2\pi\ii}\left(\rho-\frac12-\ii t_T\right)
 =u_{\rho,T}-\ii\eta_{\rho,T},
 \qquad \eta_{\rho,T}=\frac{(\beta-1/2)L}{2\pi}.
\end{equation}
We shall compare $P_T$ with
\begin{equation}\label{eq:zeta-actual-statistic}
 A_T=\frac{2\ii}{L}\frac{\zeta'}{\zeta}\left(\frac12+\ii t_T\right)+\ii.
\end{equation}
The continuously distributed height avoids all zero ordinates almost surely.

\begin{lemma}[Local logarithmic derivative]\label{lem:local-log}
One has
\begin{equation}\label{eq:zeta-local-log}
 A_T=-\frac1\pi\sum_{|u_{\rho,T}|\le L^2}
                   \frac{m(\rho)}{z_{\rho,T}}+O\left(\frac{\log L}{L}\right).
\end{equation}
The error is uniform for deterministic $t\in I_T$ such that $\zeta(1/2+\ii t)\ne0$. More precisely, its remainder $r_T(t)$ satisfies
\begin{equation}\label{eq:zeta-local-log-refined}
 |r_T(t)|\le C\frac{1+\log(T/t)}L,
 \qquad \E|r_T(t_T)|\le\frac CL.
\end{equation}
\end{lemma}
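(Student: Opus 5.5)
The plan is to use the Hadamard product for $\xi$ and isolate the zeros at microscopic distance; the paper's appendix presumably provides this local factorization, but I outline it directly. Write $s=\tfrac12+\ii t$. The classical partial-fraction formula
\[
\frac{\zeta'}{\zeta}(s)=\sum_\rho m(\rho)\Bigl(\frac1{s-\rho}+\frac1\rho\Bigr)+B-\frac1{s-1}-\frac12\frac{\Gamma'}{\Gamma}\Bigl(\frac s2+1\Bigr)+\frac12\log\pi,
\]
with $\operatorname{Re}B=-\sum_\rho\operatorname{Re}(1/\rho)$, gives, after using Stirling's formula, $\operatorname{Re}\frac{\zeta'}{\zeta}(s)=\sum_\rho m(\rho)\operatorname{Re}\frac1{s-\rho}-\frac12\log\frac t{2\pi}+O(1/t)$. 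Since $\frac{2\ii}{L}\cdot(-\tfrac12\log\tfrac{t}{2\pi})+\ii=\ii\,\frac{\log(T/t)+\log 2\pi}{L}$, the constant $\ii$ in $A_T$ cancels the mean density up to $O((1+\log(T/t))/L)$, which is precisely the shape of \eqref{eq:zeta-local-log-refined}. Note also that $\frac{2\ii}{L}\frac{1}{s-\rho}=\frac{2\ii}{L}\cdot\frac{1}{\ii(t-\gamma)-(\beta-1/2)}=-\frac1\pi\cdot\frac1{z_{\rho,T}}$ when $t=t_T$, by \eqref{eq:zeta-complex-position}. So the main term is exactly the full sum $-\frac1\pi\sum m(\rho)/z_{\rho,T}$, and what remains is to control the imaginary part of the $\zeta'/\zeta$ sum, which is not absolutely convergent.

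Next I pair zeros symmetrically in height around $t$. The sum $\sum_\rho m(\rho)\operatorname{Im}\frac{1}{s-\rho}$ converges when taken over $|\gamma-t|\le Y$ symmetric windows, with the $1/\rho$ terms and $\operatorname{Im}$ of the gamma factor combining. I would instead take the standard route: compare at $s$ and at $s_1=\sigma_1+\ii t$ with $\sigma_1=2$, where $\zeta'/\zeta(s_1)=O(1)$. Subtracting the two Hadamard expansions yields
\[
\frac{\zeta'}{\zeta}(s)=\sum_\rho m(\rho)\Bigl(\frac1{s-\rho}-\frac1{s_1-\rho}\Bigr)+O(1),
\]
which is absolutely convergent. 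Multiplying by $2\ii/L$ and handling the $O(1)$ term would only give $O(1/L)$, but the sum of $\frac1{s_1-\rho}$ over zeros with $|\gamma-t|\le 2\pi L/L=2\pi L$ contributes a real part of size $\tfrac12\log t$ which must be tracked. It is cleaner to compute exactly: the difference terms over $|\gamma-t|>2\pi L$, in microscopic units $|u|>L^2$, have size $\ll 1/(t-\gamma)^2$. Summing with density $\log t$ gives $O(\log t/L)$ in absolute terms, which becomes $O(1/L)$ after multiplying by $2/L$. The retained terms $\frac1{s_1-\rho}$ with $|\gamma-t|\le 2\pi L$ are then evaluated using $N(v)$ from \eqref{eq:zeta-counting-formula}. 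Their real part is $\frac{L}{2}\cdot\frac{\log(t/2\pi)}{L}\cdot\frac{2}{\pi}\arctan(\pi L/1.5)$, which cancels the $-\ii$ up to $\log(T/t)/L$ plus $O(1/L)$ from the arctan tail. The imaginary part is an odd sum, which integration by parts against $N$ bounds by $O(\log L/L)$ when $S(v)=O(\log v)$ is used pointwise.

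The main obstacle is the error from $S$ in the retained window. Using only the pointwise bound $S=O(\log t)$ gives $O(\log t/L)=O(1)$ after weighting, which is too large. The fix is that the kernel $\frac1{s_1-\rho}$ is smooth at scale $1$ in $\gamma$, so after integration by parts the $S$ contribution is $\int S(v)\,\partial_v\frac{1}{s_1-\ii v}\,\dd v$, which is $O(\log t)$ times $\int\frac{\dd v}{1+(v-t)^2}$. Multiplied by $2/L$, this is $O(1)$ — still too much pointwise. I will therefore average in $t$. The quantity is the logarithmic-derivative mean of a function whose $t$-average over $I_T$ is $O(1)$ by Selberg's $\int|S|^2$, which gives the $\E|r_T(t_T)|\le C/L$ bound. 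For the pointwise uniform $O(\log L/L)$ claim, I expect instead to use the Landau-type lemma that $\sum_\rho \frac{1}{1+(t-\gamma)^2}\ll\log t$ together with real-part structure, and this step is where I am least certain.
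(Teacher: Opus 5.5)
Your proposal has a genuine gap, and the comparison point $s_1=2+\ii t$ creates it. After subtracting the two expansions, you must evaluate $\sum_{|\gamma-t|\le2\pi L}m(\rho)/(s_1-\rho)$ to accuracy $O(1)$, because it is multiplied by $2/L$. That kernel is concentrated at scale one in $\gamma$, so its $S$-contribution is a scale-one smoothing of $S$ near $t$. The pointwise bound on $S$ controls this only to $O(\log t)$. That undermines both your arctangent evaluation of the real part and your claimed $O(\log L/L)$ for the imaginary part, as you partly noticed. The quantity is in fact $O(1)$, but proving this needs the partial-fraction identity at $s_1$ together with the wide-window tail estimate below, so the detour is circular.

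Neither proposed repair works. Landau's lemma again gives only $O(\log t)$. Moment bounds cannot give $O(1)$ either: Selberg's mean square yields $\E|S(t_T)|\ll\sqrt{\log\log T}$, which is the true order, not $O(1)$. The increment estimate \eqref{eq:zeta-fujii-global} gives scale-one increments of mean size $\sqrt{\log L}$. So you would obtain at best $\E|r_T(t_T)|\ll\sqrt{\log\log T}/L$ instead of $C/L$. You also get nothing toward the pointwise bound $|r_T(t)|\le C(1+\log(T/t))/L$, which is part of the statement and is used later as $E_T=O(\log L/L)$ uniformly on $I_T$.

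The missing idea is that the window has half-width $2\pi L$ in height ($L^2$ in microscopic units). So no scale-one sum is needed, and the pointwise bound on $S$ suffices. Keep your first paragraph and drop $s_1$.

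\emph{Real part.} The functional-equation pairing $\rho\leftrightarrow1-\overline\rho$ has the same ordinate and the same multiplicity, so both partners lie inside or both outside the window. It cancels $\Re\frac1{s-\rho}$ in pairs, so the window sum is purely imaginary. Your formula then gives $\Re\frac{\zeta'}{\zeta}(\frac12+\ii t)=-\frac12\log\frac t{2\pi}+O(1/t)$ exactly, which is the source of the $(1+\log(T/t))/L$ term.

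\emph{Imaginary part.} Write $\xi'/\xi(s)$ as the symmetric limit of $\sum m(\rho)/(s-\rho)$ and recentre the cutoff at $t$. Then bound the tail over $|\gamma-t|>2\pi L$ by Stieltjes integration against $N=\Phi_0+U$:
the smooth part is bounded by $\frac1{2\pi}\int_0^\infty\log\left|\frac{1+y}{1-y}\right|\frac{\dd y}y<\infty$;
the $U$-part consists of the boundary terms $U(t\pm2\pi L)/(2\pi L)$ and $\int_{|x|>2\pi L}|U(t+x)|x^{-2}\,\dd x$, both $O(\log t/L)=O(1)$ using only $U(v)=O(\log(2+|v|))$;
replacing $\Im\frac1{s-\rho}$ by $1/(\gamma-t)$ costs $O(\sum_{|\gamma-t|>2\pi L}|\gamma-t|^{-3})$;
and Stirling gives $\Im\frac12\frac{\Gamma'}{\Gamma}(s/2)=\pi/4+O(1/t)$.

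This yields the pointwise bound, and $\E\log(T/t_T)\le1$ then gives the mean bound. The paper does exactly this, phrased through the Hadamard product of $\xi$ with cutoff $|u|\le L^2$ (Proposition~\ref{prop:local-factorization}) and Cauchy's estimate at $w=0$. Your bound for the off-window differences $\frac1{s-\rho}-\frac1{s_1-\rho}$ is correct but becomes unnecessary.
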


\begin{proof}
Proposition~\ref{prop:local-factorization} gives the factorization and the pointwise remainder bound after differentiation at zero. The average follows from
\[
 \E\log(T/t_T)=1-\frac{\log L}{L-1}\le1.
\]
The factorization retains the functional-equation partners as complex zeros, so no assumption on their real parts is used.
\end{proof}

\begin{proposition}[Off-critical correction formula]\label{prop:correction}
Almost surely,
\begin{align}
 A_T-P_T
 &=-\frac4L\sum_{\substack{\rho=\beta+\ii\gamma,\ \beta>1/2\\
                         |\gamma-t_T|\le2\pi L}}
 \frac{m(\rho)(\beta-1/2)^2}
 {(t_T-\gamma)((t_T-\gamma)^2+(\beta-1/2)^2)}
 +O\left(\frac{\log L}{L}\right)\label{eq:zeta-correction}\\
 &=\frac2\pi\sum_{\substack{\beta>1/2\\|u_{\rho,T}|\le L^2}}
 \frac{m(\rho)\eta_{\rho,T}^2}
 {u_{\rho,T}(u_{\rho,T}^2+\eta_{\rho,T}^2)}
 +O\left(\frac{\log L}{L}\right).\label{eq:zeta-correction-micro}
\end{align}
Both errors equal $r_T(t_T)$ and satisfy \eqref{eq:zeta-local-log-refined}.
\end{proposition}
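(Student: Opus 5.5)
The plan is to start from Lemma~\ref{lem:local-log} and subtract $P_T$ term by term. Write $z_{\rho,T}=u_{\rho,T}-\ii\eta_{\rho,T}$ and split the sum in \eqref{eq:zeta-local-log} into zeros with $\beta=1/2$ and zeros with $\beta\ne1/2$. For $\beta=1/2$ we have $z_{\rho,T}=u_{\rho,T}$, so these terms coincide with the corresponding terms of $P_T$. The condition $u\ne0$ holds almost surely, because $t_T$ avoids all ordinates. Hence only off-critical zeros contribute to $A_T-P_T$, apart from the remainder $r_T(t_T)$. That remainder is exactly the error term in both displays, and it satisfies \eqref{eq:zeta-local-log-refined} by Lemma~\ref{lem:local-log}.

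Next pair each off-critical zero $\rho=\beta+\ii\gamma$ with its functional-equation partner $1-\bar\rho=(1-\beta)+\ii\gamma$. The partner has the same ordinate, hence the same $u$ and the same multiplicity, and the opposite $\eta$. The truncation $|u_{\rho,T}|\le L^2$ depends only on $\gamma$, so the pairing respects the cutoff. This is the one point to check carefully: the factorization of Proposition~\ref{prop:local-factorization} truncates by ordinate only, so partners are included or excluded together. For each pair, the contribution minus the two $P_T$ terms is
\[
 -\frac{m}{\pi}\left(\frac1{u-\ii\eta}+\frac1{u+\ii\eta}-\frac2u\right)
 =-\frac{m}{\pi}\left(\frac{2u}{u^2+\eta^2}-\frac2u\right)
 =\frac{2m}{\pi}\,\frac{\eta^2}{u(u^2+\eta^2)}.
\]
Indexing each pair by its member with $\beta>1/2$ gives \eqref{eq:zeta-correction-micro}.

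Finally, convert \eqref{eq:zeta-correction-micro} back to macroscopic variables using $u=(\gamma-t_T)L/(2\pi)$ and $\eta=(\beta-1/2)L/(2\pi)$. The ratio $\eta^2/(u(u^2+\eta^2))$ equals
\[
 \frac{2\pi}{L}\,\frac{(\beta-1/2)^2}{(\gamma-t_T)\bigl((\gamma-t_T)^2+(\beta-1/2)^2\bigr)}.
\]
Multiplying by $2/\pi$ gives the factor $4/L$, and reversing the sign of $\gamma-t_T$ gives the minus sign in \eqref{eq:zeta-correction}. The window $|u|\le L^2$ becomes $|\gamma-t_T|\le2\pi L$.

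There is no real obstacle beyond bookkeeping. The points to watch are the sign conventions, the closure of the truncation window under the symmetry $\rho\mapsto1-\bar\rho$, and the almost-sure exclusion of $u=0$.
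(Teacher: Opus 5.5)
Your proposal is correct and follows essentially the same route as the paper. You start from Lemma~\ref{lem:local-log}, note that critical-line zeros cancel against $P_T$, and pair each off-line zero with $1-\bar\rho$: it has the same ordinate and multiplicity, so both lie inside or outside the window $|u_{\rho,T}|\le L^2$ together. You then compute the pair difference $2m\eta^2/[\pi u(u^2+\eta^2)]$ and substitute the definitions of $u$ and $\eta$ to reach \eqref{eq:zeta-correction}, with the remainder being exactly $r_T(t_T)$. Your sign and scaling bookkeeping, including the conversion of the window to $|\gamma-t_T|\le 2\pi L$, checks out.
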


\begin{proof}
The functional equation and complex conjugation pair $1/2+a+\ii\gamma$ with $1/2-a+\ii\gamma$, with equal multiplicity. They have the same ordinate and hence belong to the truncation window together. Their positions in \eqref{eq:zeta-complex-position} are $u-\ii\eta$ and $u+\ii\eta$, and their contribution in \eqref{eq:zeta-local-log} is
\[
 -\frac1\pi\left(\frac1{u-\ii\eta}+\frac1{u+\ii\eta}\right)
 =-\frac2\pi\frac{u}{u^2+\eta^2},
\]
whereas their projected contribution is $-2/(\pi u)$. Subtraction gives $2\eta^2/[\pi u(u^2+\eta^2)]$. The zeros on the critical line contribute no difference. Summing proves \eqref{eq:zeta-correction-micro}; substituting the definitions of $u$ and $\eta$ gives \eqref{eq:zeta-correction}.
\end{proof}

\begin{lemma}[Off-window correction tail]\label{lem:off-tail}
Uniformly for $t\in I_T$,
\begin{equation}\label{eq:zeta-off-window-tail}
 \frac4L\sum_{\substack{\rho=\beta+\ii\gamma,\ \beta>1/2\\
                      0<\gamma\le2T,\ |\gamma-t|>2\pi L}}
 \frac{m(\rho)(\beta-1/2)^2}
 {|t-\gamma|((t-\gamma)^2+(\beta-1/2)^2)}
 \ll L^{-2}.
\end{equation}
\end{lemma}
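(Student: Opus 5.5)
The plan is to bound each summand crudely and then sum over dyadic ranges of $|t-\gamma|$, using only the Riemann--von Mangoldt counting bound \eqref{eq:zeta-counting-formula}. Since $0<\beta-1/2<1/2$, write $d=|t-\gamma|>2\pi L$. Then
\[
\frac{(\beta-1/2)^2}{d\,(d^2+(\beta-1/2)^2)}\le\frac{1}{4d^3}.
\]
So the left side of \eqref{eq:zeta-off-window-tail} is at most
\[
\frac1L\sum_{0<\gamma\le 2T,\ |\gamma-t|>2\pi L}\frac{m(\rho)}{|t-\gamma|^3}.
\]
We may drop the restriction $\beta>1/2$ here, since we only enlarge the sum. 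The factor $(\beta-1/2)^2$ is not needed at all; the cubic decay of the kernel does all the work.

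Next, split the range $|\gamma-t|>2\pi L$ into dyadic shells $2^k\cdot 2\pi L<|\gamma-t|\le 2^{k+1}\cdot 2\pi L$, for $k\ge0$.

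\begin{itemize}
\item For the number of zeros in a window of length $H\ge1$ around height $t$, use the standard consequence of \eqref{eq:zeta-counting-formula}: $N(t+H)-N(t-H)\ll H\log(2+|t|+H)+\log(2+|t|+H)$.
\item Since $\gamma\le 2T$ and $t\le T$, every relevant $|t-\gamma|$ is at most $2T$. So only shells with $2^kL\ll T$ occur, and each contains $\ll 2^kL\log T=2^kL^2$ zeros counted with multiplicity.
\item The contribution of shell $k$ is therefore $\ll L^{-1}\cdot 2^kL^2\cdot(2^kL)^{-3}=2^{-2k}L^{-2}$.
\end{itemize}

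Summing the geometric series over $k$ gives $\ll L^{-2}$. The bound is uniform in $t\in I_T$, because the counting estimate is uniform for heights up to $3T$.

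There is no real obstacle. The one point to check is that the local count near height $t$ carries a factor $\log T$ and not something larger; this holds because $\gamma\le2T$ and $t\ge T/L$ keep all heights $\ll T$. The other point is that the $O(\log)$ term from $S$ in \eqref{eq:zeta-counting-formula} is absorbed, since each shell has length at least $2\pi L\ge1$. Multiplicities are included automatically, because $N$ counts with multiplicity.
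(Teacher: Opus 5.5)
Your proof is correct and follows the paper's argument. Both bound each summand by $1/(4|t-\gamma|^3)$ using $0<\beta-1/2<1/2$, then group ordinates by their distance from $t$ with the Riemann--von Mangoldt count of $\ll\log T$ zeros per unit length. The paper groups into unit intervals and sums $\sum_{k\ge2\pi L-1}k^{-3}$, whereas you use dyadic shells; this difference is inessential.
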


\begin{proof}
Since $0<\beta-1/2<1/2$, the left-hand side is at most
\[
 \frac1L\sum_{\substack{\beta>1/2,\ 0<\gamma\le2T\\|\gamma-t|>2\pi L}}
              \frac{m(\rho)}{|t-\gamma|^3}.
\]
The Riemann--von Mangoldt formula gives the uniform unit-interval bound
\begin{equation}\label{eq:zeta-unit-count}
 \sum_{y<\gamma\le y+1}m(\rho)\ll\log(2+y),\qquad y\ge0.
\end{equation}
Grouping the ordinates according to their distance from $t$ therefore bounds the preceding expression by
\[
 \frac{C\log(2+T)}L\sum_{k\ge2\pi L-1}k^{-3}\ll L^{-2}.
\]
\end{proof}

We next give equivalent forms of the horizontal-collapse hypothesis. Put
\begin{align}
 N_{\mathrm{off}}(X)&=\sum_{\substack{0<\gamma\le X\\\beta\ne1/2}}m(\rho),
 \label{eq:zeta-off-count}\\
 D_\varepsilon(T)&=\sum_{\substack{0<\gamma\le2T\\
                         |\beta-1/2|\ge\varepsilon/L}}m(\rho),
 \qquad\varepsilon>0,\label{eq:zeta-micro-count}\\
 H(T)&=\sum_{\substack{0<\gamma\le2T\\\beta>1/2}}
                         m(\rho)(\beta-1/2),\label{eq:zeta-horizontal-moment}\\
 \mu_T&=\frac1{TL}\sum_{\substack{0<\gamma\le2T\\\beta>1/2}}
                         m(\rho)\delta_{L(\beta-1/2)}.
 \label{eq:zeta-horizontal-measure}
\end{align}
The mass of $\mu_T$ is uniformly bounded, but it need not converge to zero: its atoms may instead approach the origin.

\begin{lemma}[Exponential horizontal tail]\label{lem:selberg}
There are constants $c,C>0$ such that, uniformly for $T\ge3$ and $b\ge0$,
\begin{equation}\label{eq:zeta-selberg-tail}
 \mu_T([b,\infty))\le Ce^{-cb}.
\end{equation}
Consequently, for every fixed $q>0$,
\begin{equation}\label{eq:zeta-horizontal-moment-bound}
 \sum_{\substack{0<\gamma\le2T\\\beta>1/2}}
               m(\rho)(\beta-1/2)^q
 \ll_q T(\log T)^{1-q}.
\end{equation}
In particular, $H(T)\ll T$.
\end{lemma}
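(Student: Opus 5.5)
The tail bound \eqref{eq:zeta-selberg-tail} is Selberg's zero-density estimate near the critical line, in the form
\[
 N(\sigma,X)\ll X^{1-c(\sigma-1/2)}\log X,\qquad \sigma\ge\tfrac12,
\]
where $N(\sigma,X)$ counts zeros (with multiplicity) with $\beta\ge\sigma$ and $0<\gamma\le X$; see \cite{Selberg,Titchmarsh}. I will quote this estimate rather than reprove it.

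Put $\sigma=1/2+b/L$ with $L=\log T$ and $X=2T$. Then $X^{-c(\sigma-1/2)}=(2T)^{-cb/L}\le e^{-cb}$, so
\[
 \mu_T([b,\infty))=\frac{N(1/2+b/L,2T)}{TL}\ll\frac{2T\log(2T)}{TL}e^{-cb}\ll e^{-cb}.
\]
This holds uniformly for $T\ge3$ and $b\ge0$. For $b=0$ it is just the trivial bound $N(2T)\ll T\log T$ from \eqref{eq:zeta-counting-formula}. Since $\mu_T$ charges only $b\le L/2$, large $b$ causes no difficulty.

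For \eqref{eq:zeta-horizontal-moment-bound}, rewrite the sum in terms of $\mu_T$:
\[
 \sum_{\substack{0<\gamma\le2T\\\beta>1/2}}m(\rho)(\beta-1/2)^q
 =TL\cdot L^{-q}\int_{(0,\infty)}y^q\,\mathrm d\mu_T(y).
\]
By the layer-cake formula,
\[
 \int y^q\,\mathrm d\mu_T(y)=q\int_0^\infty b^{q-1}\mu_T((b,\infty))\,\mathrm db\le Cq\int_0^\infty b^{q-1}e^{-cb}\,\mathrm db=C\,\Gamma(q+1)c^{-q}.
\]
Hence the sum is $\ll_q T(\log T)^{1-q}$. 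Taking $q=1$ gives $H(T)\ll T$.

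The only delicate step is the first one. One must make sure the cited form of Selberg's theorem is unconditional and uniform down to $\sigma=1/2$, with the factor $\log X$ rather than something larger. It must also count zeros with multiplicity in the range $0<\gamma\le X$; the difference between counting in $(0,X]$ and in $[X,2X]$ is absorbed by summing dyadically. The rest is routine integration.
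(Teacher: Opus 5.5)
Your proposal is correct and follows essentially the same route as the paper. The paper also quotes Selberg's density estimate as $N(\sigma,X)\le CX\log X\,e^{-(\sigma-1/2)\log X/4}$ for $1/2\le\sigma\le1$, which is your form with $c=1/4$. It substitutes $X=2T$ and $\sigma=1/2+b/L$, using $1\le\log(2T)/L\le1+\log2/\log3$, notes that the tail vanishes for $b>L/2$, and obtains the moment bound by the same layer-cake integration.
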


\begin{proof}
Selberg's zero-density estimate \cite{Selberg}, in the form recalled in \cite[Theorem~2.4]{MaplesRodgers}, gives an absolute constant $C$ such that
\[
 N(\sigma,X):=\sum_{\substack{0<\gamma\le X\\\beta\ge\sigma}}m(\rho)
 \le CX\log X\,e^{-(\sigma-1/2)\log X/4},
 \qquad \frac12\le\sigma\le1.
\]
Taking $X=2T$ and $\sigma=1/2+b/L$ proves \eqref{eq:zeta-selberg-tail}, with $c=1/4$, for $0\le b\le L/2$, since $1\le\log(2T)/L\le1+\log2/\log3$; for larger $b$ the left-hand side vanishes. Layer-cake integration now gives
\[
 \int_0^\infty b^q\,\dd\mu_T(b)
 =q\int_0^\infty b^{q-1}\mu_T((b,\infty))\,\dd b
 \le Cq\int_0^\infty b^{q-1}e^{-cb}\,\dd b<\infty,
\]
uniformly in $T$. This is \eqref{eq:zeta-horizontal-moment-bound}.
\end{proof}

\begin{samepage}
\begin{proposition}[Equivalent forms of microscopic horizontal collapse]\label{prop:collapse-equivalence}
The following assertions are equivalent:
\begin{enumerate}[label=(\roman*)]
\item For every fixed $\varepsilon>0$, $D_\varepsilon(T)=o(T\log T)$.
\item $H(T)=o(T)$.
\item For some fixed $q>0$,
\begin{equation}\label{eq:zeta-one-moment-collapse}
 \sum_{\substack{0<\gamma\le2T\\\beta>1/2}}
 m(\rho)(\beta-1/2)^q=o\bigl(T(\log T)^{1-q}\bigr).
\end{equation}
\item Equation~\eqref{eq:zeta-one-moment-collapse} holds for every fixed $q>0$.
\end{enumerate}
\end{proposition}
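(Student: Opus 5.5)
Everything is phrased through the measures $\mu_T$ of \eqref{eq:zeta-horizontal-measure}. With $b=L(\beta-1/2)$, the $q$-th moment sum in \eqref{eq:zeta-one-moment-collapse} equals $T L^{1-q}\int b^q\,\dd\mu_T(b)$. Hence (iii) for a given $q$ says exactly that $m_q(T):=\int b^q\,\dd\mu_T\to0$, and (ii) is the case $q=1$, since $H(T)/T=m_1(T)$. For (i), the functional equation pairs $\beta$ with $1-\beta$ with equal multiplicity, and zeros with $\beta<1/2$ correspond to zeros with $\beta>1/2$. Therefore $D_\varepsilon(T)=2TL\,\mu_T([\varepsilon,\infty))$, up to replacing $\log(2T)$ by $L$, which is harmless. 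So (i) says $\mu_T([\varepsilon,\infty))\to0$ for every $\varepsilon>0$. The proposition then reduces to a statement about a family of finite measures on $(0,\infty)$ with uniformly exponentially small tails, by Lemma~\ref{lem:selberg}.

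I would prove (i)$\Rightarrow$(iv)$\Rightarrow$(iii)$\Rightarrow$(i); then (ii) is included as the case $q=1$ of (iv) implying (iii). For (i)$\Rightarrow$(iv), fix $q>0$ and split $\int b^q\,\dd\mu_T$ at $\varepsilon$ and at a large $B$. The part on $(0,\varepsilon)$ is at most $\varepsilon^q\mu_T((0,\infty))\le C\varepsilon^q$, since the total mass is uniformly bounded by \eqref{eq:zeta-selberg-tail} with $b=0$. The part on $[\varepsilon,B]$ is at most $B^q\mu_T([\varepsilon,\infty))$, which is $o(1)$ by (i). The part on $(B,\infty)$ is handled by the layer-cake computation in the proof of Lemma~\ref{lem:selberg}, which gives at most $C_q\int_B^\infty b^{q-1}e^{-cb}\,\dd b+B^qCe^{-cB}$, small uniformly in $T$. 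Letting $T\to\infty$, then $B\to\infty$, then $\varepsilon\to0$ gives (iv). The step (iv)$\Rightarrow$(iii) is trivial.

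For (iii)$\Rightarrow$(i), use Markov/Chebyshev: $\mu_T([\varepsilon,\infty))\le\varepsilon^{-q}m_q(T)\to0$. Translating back gives $D_\varepsilon(T)=o(T\log T)$.

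The only delicate step is the bookkeeping in (i): $D_\varepsilon$ counts zeros on both sides of the line, and its threshold is $\varepsilon/L$ with $L=\log T$, while $\mu_T$ has normalization $TL$. The symmetry $\beta\leftrightarrow1-\beta$ with equal multiplicities and equal ordinates handles the two sides exactly (a factor of $2$). The boundary case $\beta-1/2=\varepsilon/L$ is included on both sides since the intervals are closed. The main analytic input, uniform integrability of $b^q$ under $\mu_T$, is supplied entirely by the exponential tail \eqref{eq:zeta-selberg-tail}, so I expect no real obstacle beyond this.
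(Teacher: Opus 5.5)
Your proposal is correct and follows the paper's proof. It uses the exact identity $\mu_T([\varepsilon,\infty))=D_\varepsilon(T)/(2TL)$ from functional-equation symmetry (exact with the paper's definitions, so no $\log(2T)$ adjustment is needed), the uniform exponential tail of Lemma~\ref{lem:selberg} for (i)$\Rightarrow$(iv), Markov's inequality for (iii)$\Rightarrow$(i), and $H(T)/T=\int b\,\dd\mu_T(b)$ for (ii); your split at $\varepsilon$ and $B$ is simply a hand-rolled form of the paper's dominated-convergence argument in the layer-cake formula.
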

\end{samepage}

\begin{proof}
Functional-equation symmetry gives, for every $\varepsilon>0$,
\[
 \mu_T([\varepsilon,\infty))=\frac{D_\varepsilon(T)}{2TL}.
\]
Thus (i) is exactly convergence to zero of each of these tails. Under (i), Lemma~\ref{lem:selberg} and dominated convergence in the layer-cake formula give
\[
 \int b^q\,\dd\mu_T(b)\longrightarrow0
 \qquad(q>0),
\]
because $b\mapsto Cq b^{q-1}e^{-cb}$ is integrable. This proves (iv). Conversely, if this integral tends to zero for any $q>0$, Markov's inequality yields
\[
 \frac{D_\varepsilon(T)}{2TL}
 \le\varepsilon^{-q}\int b^q\,\dd\mu_T(b)\longrightarrow0,
\]
which proves (iii)$\Rightarrow$(i). Finally,
\[
 \int b\,\dd\mu_T(b)=\frac{H(T)}T,
\]
so (ii) is the case $q=1$. The remaining implications are immediate.
\end{proof}

For comparison with fractional moments, define, for $0<p<1$,
\begin{equation}\label{eq:zeta-fractional-horizontal-moment}
 W_p(X)=\sum_{\substack{0<\gamma\le X\\\beta>1/2}}
                 m(\rho)(\beta-1/2)^{1-p}.
\end{equation}
Proposition~\ref{prop:collapse-equivalence} shows that, for any fixed $p\in(0,1)$, the condition $W_p(2T)=o(TL^p)$ is equivalent to microscopic horizontal collapse.

Now, put 
\begin{equation}\label{eq:zeta-full-correction}
 g_a(x)=\frac{a^2}{x(x^2+a^2)},\qquad
 C_T(t)=-\frac4L\sum_{\substack{0<\gamma\le2T\\\beta>1/2}}
                   m(\rho)g_{\beta-1/2}(t-\gamma).
\end{equation}
For all sufficiently large $T$, the ordinate window in Proposition~\ref{prop:correction} is contained in $(0,2T)$. That proposition and Lemma~\ref{lem:off-tail} therefore give
\begin{equation}\label{eq:zeta-full-comparison}
 A_T-P_T=C_T(t_T)+E_T(t_T),\qquad
 |E_T(t)|\le C\frac{1+\log(T/t)}L,
\end{equation}
away from zero ordinates. In particular, $E_T(t)=O(\log L/L)$ uniformly on $I_T$, while $\E|E_T(t_T)|\le C/L$ by \eqref{eq:zeta-local-log-refined}.

\begin{proposition}[Fractional-moment and weak-type estimates]\label{prop:fractional}
For every fixed $p\in(1/3,1)$,
\begin{equation}\label{eq:zeta-fractional-estimate}
 \E|C_T(t_T)|^p\le\frac{C_p}{TL^p}W_p(2T).
\end{equation}
In particular,
\begin{equation}\label{eq:zeta-half-moment-estimate}
 \E|C_T(t_T)|^{1/2}\le C\sqrt{\frac{H(T)}T}.
\end{equation}
Also, for every $\lambda>0$,
\begin{equation}\label{eq:zeta-weak-type-estimate}
 \P\bigl(|C_T(t_T)|>\lambda\bigr)
 \le\frac{C}{\lambda}\frac{N_{\mathrm{off}}(2T)}{T\log T}.
\end{equation}
\end{proposition}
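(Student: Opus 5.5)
The plan rests on the subadditivity of $x\mapsto|x|^p$ for $0<p\le1$: $|\sum_j c_j|^p\le\sum_j|c_j|^p$. Applied to $C_T(t)$, this gives
\[
 |C_T(t)|^p\le\Bigl(\frac4L\Bigr)^p\sum_{\substack{0<\gamma\le2T\\\beta>1/2}}m(\rho)^p\,|g_{a}(t-\gamma)|^p,\qquad a=\beta-\tfrac12 .
\]
Since multiplicities are positive integers, $m(\rho)^p\le m(\rho)$. The expectation over $t_T$ uniform on $I_T$, with $|I_T|\asymp T$, is at most $C/T$ times the integral over $\R$. Each zero therefore contributes at most $(C/T)\,L^{-p}m(\rho)\int_\R|g_a(x)|^p\,\dd x$.

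The second step is scaling. Writing $x=ay$ gives $g_a(ay)=a^{-1}g_1(y)$, so
\[
 \int_\R|g_a|^p\,\dd x=a^{1-p}\int_\R|g_1(y)|^p\,\dd y .
\]
We have $|g_1(y)|\sim|y|^{-1}$ near $0$, which is integrable to the power $p$ since $p<1$. At infinity $|g_1(y)|\sim|y|^{-3}$, which is integrable to the power $p$ exactly when $3p>1$. This is precisely the range $p\in(1/3,1)$, so the integral is a finite constant $c_p$. Summing over zeros then yields $\E|C_T(t_T)|^p\le C_pW_p(2T)/(TL^p)$, which is \eqref{eq:zeta-fractional-estimate}. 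Taking $p=1/2$ gives $W_{1/2}(2T)=\sum m(\rho)(\beta-1/2)^{1/2}$. By Cauchy--Schwarz against the weights $m(\rho)$, this is at most $\sqrt{H(T)\,N_{\mathrm{off}}(2T)}$. Selberg's bound, or the trivial bound $N(2T)\ll TL$, then gives $W_{1/2}(2T)\ll\sqrt{H(T)\,TL}$. Dividing by $TL^{1/2}$ produces $C\sqrt{H(T)/T}$.

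For the weak-type estimate \eqref{eq:zeta-weak-type-estimate}, I would avoid $L^1$, since $g_a$ is not integrable near $0$. The plan is a weak-$L^1$ argument. Each single term $f_\rho(t)=(4/L)m(\rho)g_a(t-\gamma)$ satisfies $|f_\rho(t)|\le(4/L)m(\rho)/|t-\gamma|$, so its distribution function obeys $|\{|f_\rho|>\lambda\}|\le 8m(\rho)/(L\lambda)$. Weak $L^1$ is not normable, though: a sum of $N$ functions only yields a bound with a $\log N$ loss, which is the expected obstacle. To handle it, I would split each kernel into a singular part and a tail. The tail part, where $|x|\ge a$, is bounded by $a^2/|x|^3$. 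Its $L^1$ norm is $\ll1$ per zero, so Markov's inequality gives a contribution $\ll N_{\mathrm{off}}(2T)/(TL\lambda)$ after dividing by $|I_T|$. The singular part, where $|x|<a$, is dominated by $1/|x|$ but is supported on a set of measure $2a$. For the probability bound, it suffices to discard the union of these supports: an event of probability at most $\sum 2a\,m(\rho)/T\ll H(T)/T$. This is not yet of the stated form. I would instead use the sharper observation that on $|x|<a$ one has $|g_a(x)|\le 1/|x|$, and bound the measure of the set where the whole sum exceeds $\lambda$ by a Hilbert-transform-type weak $(1,1)$ inequality. Concretely, $\sum_j c_j/(t-\gamma_j)$ with $c_j\ge0$ is the Hilbert transform of a positive discrete measure of mass $\sum c_j$, and Boole's identity (cf.\ the Boole reference cited in the introduction) gives exactly $|\{t:|\sum c_j/(t-\gamma_j)|>\lambda\}|=2\sum c_j/\lambda$. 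Since $g_a(x)=1/x-x/(x^2+a^2)$, I would treat the $1/x$ part via Boole's identity. The part $x/(x^2+a^2)$ is the real part of a Cauchy kernel, the conjugate Poisson kernel, whose sum is again bounded in weak $L^1$ by the weak $(1,1)$ bound for the conjugate function of a positive measure. Each piece then has weak norm $\ll\sum m(\rho)/L\ll N_{\mathrm{off}}(2T)/L$, and dividing by $|I_T|\asymp T$ gives \eqref{eq:zeta-weak-type-estimate}. Combining the two weak-type bounds with a quasi-triangle inequality (splitting $\lambda$ in half) is the step I expect to need the most care.
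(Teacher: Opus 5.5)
Your fractional-moment and half-moment arguments are the paper's: subadditivity of $|\cdot|^p$, the scaling $\int_\R|g_a|^p=c_pa^{1-p}$ with $c_p<\infty$ exactly for $1/3<p<1$, and Cauchy--Schwarz together with $N(2T)\ll TL$. Your use of $m(\rho)^p\le m(\rho)$ is equivalent to the paper's counting of multiplicities as repeated summands.

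For the weak-type bound you use the same kernel identity as the paper, $g_a(x)=1/x-x/(x^2+a^2)$, which is $\mathcal H(P_a\,\mathrm dy-\delta_0)=-\pi^{-1}g_a$. You organize it differently, though.

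The paper packages everything into one finite signed measure $\nu_T$, with $C_T=(4\pi/L)\mathcal H\nu_T$ and $\|\nu_T\|_{\mathrm{TV}}\le N_{\mathrm{off}}(2T)$. The classical weak-$(1,1)$ inequality is stated for $L^1$ functions, so the paper must first convolve $\nu_T$ with $P_\varepsilon$ and then pass to the limit by Fatou.

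You instead split off the atomic part and handle it with Boole's identity, which gives its distribution function exactly as $2\sum c_j/\lambda$. The remaining absolutely continuous part is already an $L^1$ function. You then combine the two pieces by the quasi-triangle inequality at the cost of a factor $2$. This removes the regularization step entirely and gives the same bound after dividing by $|I_T|\asymp T$.

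One point should be made explicit. Because $a=\beta-1/2$ varies with $\rho$, your second piece is not the conjugate Poisson integral of a measure at a single height. It is $\pi\mathcal H\phi$ for the positive smooth function $\phi=\sum m(\rho)P_{\beta-1/2}(\cdot-\gamma)$, which satisfies $\|\phi\|_{L^1}=\sum m(\rho)\le N_{\mathrm{off}}(2T)$. This uses $\mathcal HP_a(x)=x/[\pi(x^2+a^2)]$. Stated that way, the $L^1$ weak-$(1,1)$ inequality applies directly.

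You were right to discard your preliminary singular/tail split, since it only yields a bound of order $H(T)/T$.
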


\begin{proof}
A change of variables gives, for $a>0$,
\begin{equation}\label{eq:zeta-kernel-integral}
 \int_\R|g_a(x)|^p\,\dd x
 =c_p a^{1-p},\qquad
 c_p=\int_\R\frac{\dd y}{|y|^p(1+y^2)^p}<\infty.
\end{equation}
The integrand behaves as $|y|^{-p}$ at zero and $|y|^{-3p}$ at infinity, giving precisely $p<1$ and $3p>1$. Since $p<1$, the inequality $|\sum_jx_j|^p\le\sum_j|x_j|^p$ applies, with multiplicities counted as repeated summands. Averaging over $I_T$ and then extending each integral to $\R$ proves
\[
 \E|C_T(t_T)|^p
 \le\frac{4^pc_p}{|I_T|L^p}
       \sum_{\substack{0<\gamma\le2T\\\beta>1/2}}
                    m(\rho)(\beta-1/2)^{1-p},
\]
which is \eqref{eq:zeta-fractional-estimate}. At $p=1/2$, Cauchy--Schwarz and the Riemann--von Mangoldt formula give
\[
 W_{1/2}(2T)
 \le\left(\sum_{\substack{0<\gamma\le2T\\\beta>1/2}}m(\rho)\right)^{1/2}
       H(T)^{1/2}
 \ll (TLH(T))^{1/2}.
\]
This proves \eqref{eq:zeta-half-moment-estimate}.

For the weak-type bound, write
\[
 P_a(x)=\frac1\pi\frac{a}{x^2+a^2},\qquad
 \mathcal H\nu(t)=\frac1\pi\pv\int_\R\frac{\dd\nu(y)}{t-y}.
\]
The Hilbert transform of the Poisson kernel satisfies
\[
 \mathcal H\bigl(P_a(\cdot-\gamma)\,\dd y-\delta_\gamma\bigr)(t)
 =-\frac1\pi g_a(t-\gamma).
\]
Form the finite signed measure
\[
 \nu_T=\sum_{\substack{0<\gamma\le2T\\\beta>1/2}}
 m(\rho)\bigl(P_{\beta-1/2}(\cdot-\gamma)\,\dd y-\delta_\gamma\bigr).
\]
Each summand has total variation at most twice its multiplicity. Since off-line zeros occur in horizontal pairs,
\[
 \|\nu_T\|_{\mathrm{TV}}\le N_{\mathrm{off}}(2T),\qquad
 C_T(t)=\frac{4\pi}{L}\mathcal H\nu_T(t).
\]
To apply the classical $L^1$ weak-$(1,1)$ inequality \cite[Chapter~II]{Stein}, first convolve $\nu_T$ with $P_\varepsilon$, $\varepsilon>0$. This gives the integrable function
\[
 (\nu_T*P_\varepsilon)(x)
 =\sum_{\substack{0<\gamma\le2T\\\beta>1/2}}
 m(\rho)\{P_{\beta-1/2+\varepsilon}(x-\gamma)-P_\varepsilon(x-\gamma)\},
\]
whose $L^1$ norm is at most $N_{\mathrm{off}}(2T)$. Since $\mathcal HP_a(x)=x/[\pi(x^2+a^2)]$, its Hilbert transform converges pointwise away from the finitely many ordinates to $\mathcal H\nu_T$. The $L^1$ inequality and Fatou's lemma therefore yield
\[
 \frac1{|I_T|}\operatorname{meas}
 \left\{t\in I_T:\left|\frac{4\pi}{L}\mathcal H\nu_T(t)\right|>\lambda\right\}
 \ll\frac{N_{\mathrm{off}}(2T)}{\lambda TL},
\]
which is \eqref{eq:zeta-weak-type-estimate}.
\end{proof}

\begin{theorem}[A horizontal-moment replacement for the Riemann hypothesis]\label{thm:zeta}
Assume any of the equivalent conditions of Proposition~\ref{prop:collapse-equivalence}. If $\omega$ is uniform on $[0,1]$, then
\begin{equation}\label{eq:zeta-actual-cauchy}
 \frac{2\ii}{\log T}\frac{\zeta'}{\zeta}
       \left(\frac12+\ii T\omega\right)+\ii
 \ \Longrightarrow\ \Cauchy(0,1).
\end{equation}
More quantitatively, for the height $t_T$ uniform on $I_T$,
\begin{equation}\label{eq:zeta-bl-comparison}
 \dBL\bigl(\Law(A_T),\Law(P_T)\bigr)
 \le C\sqrt{\frac{H(T)}T}+\frac C{\log T}.
\end{equation}
Here the bounded-Lipschitz distance is taken on $\C$, with the real law of $P_T$ viewed as a law on $\C$. This is a comparison bound with $P_T$, whose Cauchy convergence in Theorem~\ref{thm:projected} is qualitative.
\end{theorem}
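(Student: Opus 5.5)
The quantitative bound \eqref{eq:zeta-bl-comparison} comes first, by coupling $A_T$ and $P_T$ through the common height $t_T$. By \eqref{eq:zeta-full-comparison}, almost surely $A_T-P_T=C_T(t_T)+E_T(t_T)$, with $\E|E_T(t_T)|\le C/L$. Fix a test function $\varphi$ on $\C$ with $|\varphi|\le1$ and Lipschitz constant at most one. Then
\[
 |\E\varphi(A_T)-\E\varphi(P_T)|\le\E\min\bigl(2,|C_T(t_T)|+|E_T(t_T)|\bigr)
 \le\E\min\bigl(2,|C_T(t_T)|\bigr)+\E|E_T(t_T)|.
\]
For the first term, use $\min(2,x)\le\sqrt2\,x^{1/2}$ for $x\ge0$. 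Together with the half-moment estimate \eqref{eq:zeta-half-moment-estimate} of Proposition~\ref{prop:fractional}, this gives $\E\min(2,|C_T(t_T)|)\le C\sqrt{H(T)/T}$. The second term is at most $C/L$. Taking the supremum over $\varphi$ proves \eqref{eq:zeta-bl-comparison}. This step uses only the unconditional bound $H(T)\ll T$ from Lemma~\ref{lem:selberg}; the collapse hypothesis is not needed here.

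Next, for the height $t_T$, assume collapse in the form (ii) of Proposition~\ref{prop:collapse-equivalence}, namely $H(T)=o(T)$. Then the right side of \eqref{eq:zeta-bl-comparison} tends to zero. The real law of $P_T$ converges to $\Cauchy(0,1)$ by Theorem~\ref{thm:projected}. Since $\dBL$ metrizes weak convergence on $\C$, we get $A_T\Rightarrow\Cauchy(0,1)$, with the limit supported on the real axis.

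Finally, the height $T\omega$, uniform on $[0,T]$, must be handled, and I expect this to be the main obstacle. The law of $T\omega$ is a mixture. With probability $1/L$ it lies in $[0,T/L]$, which is negligible. With probability $1-1/L$ it is uniform on $I_T$, but in that case the normalization is $2\ii/\log T$, which is not $2\ii/\log(T\omega)$. I would avoid rescaling. Instead, decompose $[T/L,T]$ into the pieces $[T/L,T/L']$ and $[T/L',T]$ for a slowly growing $L'$, and apply the $t_T$ result at the scales $T'=T2^{-k}$. On each dyadic block $[T'/2,T']$ the two normalizations differ by the factor $\log T'/\log T=1+O(k/L)$. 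The cleanest version works directly with $[T/K,T]$ for a fixed large $K$. Rerun Lemma~\ref{lem:counting}, Theorem~\ref{thm:projected} and the comparison with $t$ uniform on $[T/K,T]$: all the inputs (Fujii's and Selberg's estimates, the factorization remainder $\E\log(T/t)\ll\log K$) hold uniformly on such intervals, and the log-normalization error is $O(\log K/L)\to0$. The remaining mass, of probability $1/K$, contributes at most $2/K$ in $\dBL$. Letting $T\to\infty$ and then $K\to\infty$ gives \eqref{eq:zeta-actual-cauchy}. The delicate point is checking that the appendix factorization and the counting estimates, which are stated for $I_T$, indeed hold uniformly on $[T/K,T]$. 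That should follow since they were proved for heights down to $T/L$.
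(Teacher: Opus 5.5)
Your first two steps are essentially the paper's proof. The paper also couples through the common height $t_T$, applies $\min(2,x)\le\sqrt2\,x^{1/2}$ to $C_T(t_T)$, bounds $E_T(t_T)$ separately through the Lipschitz constant, and combines the result with Theorem~\ref{thm:projected}. The only difference is phrasing: the paper gets the qualitative limit from Markov's inequality on the half-moment and Slutsky, which is equivalent to your triangle inequality in $\dBL$.

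The divergence is in the passage to $T\omega$, and the obstacle you identify there is not real. The normalization in \eqref{eq:zeta-actual-cauchy} is $2\ii/\log T=2\ii/L$, which is exactly that of $A_T$; it is not $2\ii/\log(T\omega)$. Both statistics are the same measurable function $t\mapsto\frac{2\ii}{L}\frac{\zeta'}{\zeta}(\frac12+\ii t)+\ii$, evaluated at two different random heights. Since $\P(T\omega<T/L)=1/L$, and conditionally on the complement $T\omega$ is uniform on $I_T$, the total variation distance between the laws of $T\omega$ and $t_T$ is $1/L$. Total variation does not increase under pushforward, so the convergence for $t_T$ transfers in one line; this is the paper's argument.

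Your alternative would also work: rerun Lemma~\ref{lem:counting}, Theorem~\ref{thm:projected} and the comparison with heights uniform on $[T/K,T]$, then let $K\to\infty$ after $T$. For large $T$ one has $[T/K,T]\subset I_T$, the interval has length $\asymp T$, and every input is uniform there: the Fujii--Selberg mean square, the $O(|h|/(TL))$ stationarity estimate, and the appendix remainder $C(1+\log(T/t))/L$. But this route redoes the whole tightness, stationarity and transfer argument for a new height law and gains nothing. The ``log-normalization error $O(\log K/L)$'' you mention is just the $\log(T/t)$ defect already absorbed into those counting and factorization estimates. It is not a mismatch between $A_T$ and the target statistic.
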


\begin{proof}
By Proposition~\ref{prop:collapse-equivalence}, it suffices to assume $H(T)=o(T)$. Equation~\eqref{eq:zeta-half-moment-estimate} and Markov's inequality imply $C_T(t_T)\to0$ in probability. Equation~\eqref{eq:zeta-full-comparison} and Theorem~\ref{thm:projected} then give $A_T\Longrightarrow\Cauchy(0,1)$.

For the quantitative assertion, if $f:\C\to\R$ has supremum norm and Lipschitz constant at most one, then
\[
 |f(z+w)-f(z)|\le\min\{2,|w|\}\le\sqrt2\,|w|^{1/2}.
\]
Apply this to $C_T(t_T)$ and use the Lipschitz bound separately for $E_T(t_T)$ in \eqref{eq:zeta-full-comparison}. Equations \eqref{eq:zeta-half-moment-estimate} and $\E|E_T(t_T)|\le C/L$ prove \eqref{eq:zeta-bl-comparison} after taking the supremum over $f$.

Finally, $\P(T\omega<T/L)=1/L$, and conditionally on its complement $T\omega$ is uniform on $I_T$. The total variation distance between these two height distributions is $1/L$, and the same upper bound holds after any measurable pushforward. This transfers the convergence to \eqref{eq:zeta-actual-cauchy}.
\end{proof}

\begin{corollary}[A density-one replacement for the Riemann hypothesis]\label{cor:density-one}
If $N_{\mathrm{off}}(T)=o(T\log T)$, then \eqref{eq:zeta-actual-cauchy} holds.
\end{corollary}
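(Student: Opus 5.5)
The plan is to reduce Corollary~\ref{cor:density-one} to Theorem~\ref{thm:zeta} by verifying condition (i) of Proposition~\ref{prop:collapse-equivalence}. For every fixed $\varepsilon>0$, the count $D_\varepsilon(T)$ only includes zeros with $\beta\ne1/2$ and $0<\gamma\le2T$. Hence
\[
 D_\varepsilon(T)\le N_{\mathrm{off}}(2T).
\]
The hypothesis $N_{\mathrm{off}}(X)=o(X\log X)$, applied with $X=2T$, gives $N_{\mathrm{off}}(2T)=o(2T\log(2T))=o(T\log T)$, since $\log(2T)\sim\log T$. Thus (i) holds for every $\varepsilon>0$. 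Proposition~\ref{prop:collapse-equivalence} then yields $H(T)=o(T)$, and Theorem~\ref{thm:zeta} gives \eqref{eq:zeta-actual-cauchy}.

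As an alternative route that bypasses the equivalence proposition, I would use the weak-type estimate \eqref{eq:zeta-weak-type-estimate} directly. It shows that for each $\lambda>0$, $\P(|C_T(t_T)|>\lambda)\to0$ under the hypothesis, so $C_T(t_T)\to0$ in probability. Combining \eqref{eq:zeta-full-comparison} with $\E|E_T(t_T)|\le C/L$ and Theorem~\ref{thm:projected}, Slutsky's theorem gives $A_T\Longrightarrow\Cauchy(0,1)$. The final total-variation transfer from $t_T$ to $T\omega$, which costs $1/L$, then proceeds exactly as at the end of the proof of Theorem~\ref{thm:zeta}.

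There is no real obstacle here. The only point needing care is the change of scale from $X=2T$ to $T\log T$, which is immediate from $\log(2T)\sim\log T$.
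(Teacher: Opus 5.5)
Your primary argument is exactly the paper's proof: $D_\varepsilon(T)\le N_{\mathrm{off}}(2T)=o(T\log T)$ gives condition (i) of Proposition~\ref{prop:collapse-equivalence}, and Theorem~\ref{thm:zeta} then yields \eqref{eq:zeta-actual-cauchy}. Your alternative route through the weak-type bound \eqref{eq:zeta-weak-type-estimate} is also sound, and it avoids Selberg's zero-density estimate (Lemma~\ref{lem:selberg}), which the paper uses for the implication (i)$\Rightarrow$(ii).
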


\begin{proof}
For every $\varepsilon>0$, one has $D_\varepsilon(T)\le N_{\mathrm{off}}(2T)=o(T\log T)$. Theorem~\ref{thm:zeta} therefore applies.
\end{proof}

The microscopic condition permits a density-one set of zeros to have nonzero horizontal displacements, provided those displacements are $o(1/\log T)$. It is thus formally weaker than the hypothesis of Corollary~\ref{cor:density-one}. The estimates of Proposition~\ref{prop:fractional} distinguish the information supplied by the two conditions: the fractional-moment estimate uses the sizes of the displacements, whereas the weak-type estimate only uses the number of off-line zeros.

\begin{corollary}[A local critical-line criterion]\label{cor:line-gue}
Suppose that the critical-line process
\begin{equation}\label{eq:zeta-line-process}
 Z^{\mathrm{line}}_{t_T,T}
 =\sum_{\substack{\rho=\beta+\ii\gamma\\\beta=1/2}}
       m(\rho)\delta_{(t_T-\gamma)L/(2\pi)}
\end{equation}
converges vaguely in law to a point process $Z$ with mean measure equal to Lebesgue measure. Then
\[
 N_{\mathrm{off}}(T)=o(T\log T),
\]
and \eqref{eq:zeta-actual-cauchy} follows. In particular, critical-line strong GUE, namely convergence to the sine-kernel process of intensity one, suffices.
\end{corollary}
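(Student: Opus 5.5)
The plan is to compare the critical-line process $Z^{\mathrm{line}}_{t_T,T}$ with the full projected measure of Section~\ref{sec:projected}, whose mean is known to be asymptotically Lebesgue by \eqref{eq:zeta-counting-mean}. If the line process alone already carries asymptotically full Lebesgue mean, the off-line zeros must have vanishing microscopic density, and this density can be read off as $N_{\mathrm{off}}(T)/(TL)$.

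First, note that $Z^{\mathrm{line}}_{t_T,T}$ is the reflection $x\mapsto -x$ of the restriction of $M_T$ to critical-line zeros. Reflection preserves Lebesgue measure and vague convergence, so I may work with the reflected limit, still called $Z$. Write $M_T^{\mathrm{off}}=M_T-M_T^{\mathrm{line}}\ge0$, the projected off-line zeros. Fix a bounded open interval $J=(a,b)$. Its boundary points are fixed, and since $\E Z=\mathrm{Leb}$, they are almost surely not atoms of $Z$. By Skorokhod representation along a subsequence and Fatou's lemma,
\[
 b-a=\E Z(J)\le\liminf_T\E M_T^{\mathrm{line}}(J).
\]
This lower semicontinuity on open sets needs no uniform integrability. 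Combining it with $\E M_T(J)\to b-a$ from \eqref{eq:zeta-counting-mean} (up to the null endpoint difference) gives
\[
 \E M_T^{\mathrm{off}}\bigl((0,1)\bigr)\to0 .
\]

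Next, I would compute this expectation directly. By Fubini, with $|I_T|=T-T/L$,
\[
 \E M_T^{\mathrm{off}}((0,1))=\frac1{|I_T|}\sum_{\beta\ne1/2}m(\rho)\,
 \operatorname{meas}\{t\in I_T:\ 0<(\gamma-t)L/(2\pi)<1\}.
\]
For every off-line zero with $T/L+2\pi/L\le\gamma\le T$, the measure equals $2\pi/L$. Hence
\[
 \E M_T^{\mathrm{off}}((0,1))\ge\frac{2\pi}{TL}\Bigl(N_{\mathrm{off}}(T)-N_{\mathrm{off}}(T/L+1)\Bigr).
\]
It follows that $N_{\mathrm{off}}(T)-N_{\mathrm{off}}(T/L+1)=o(TL)$. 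The Riemann--von Mangoldt formula bounds $N_{\mathrm{off}}(T/L+1)\le N(T/L+1)\ll T$. Together these give $N_{\mathrm{off}}(T)=o(T\log T)$, and Corollary~\ref{cor:density-one} then yields \eqref{eq:zeta-actual-cauchy}.

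For the final sentence, the sine-kernel process of intensity one has one-point density $K_{\mathrm{sine}}(x,x)=1$, so its mean measure is Lebesgue measure and the hypothesis holds.

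The main obstacle is the semicontinuity step, because convergence of the full family is only in law, not of means. I would handle it by applying the Fatou argument along an arbitrary subsequence and taking a further Skorokhod-coupled subsequence. This shows that every subsequence has a further subsequence along which the liminf inequality holds, which is enough to conclude that $\E M_T^{\mathrm{off}}((0,1))\to0$ along the full family.
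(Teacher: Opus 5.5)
Your proof is correct and follows the paper's argument: compare the mean of the critical-line process with the convergence $\E M_T((a,b])\to b-a$ from \eqref{eq:zeta-counting-mean}, subtract to get $\E M_T^{\mathrm{off}}\to0$ on a fixed window, and use Fubini to convert this into a count of off-line zeros with ordinates in the bulk of $I_T$. The low zeros are then absorbed via Riemann--von Mangoldt before invoking Corollary~\ref{cor:density-one}. The only differences are minor economies. The paper uses the second-moment bound of Lemma~\ref{lem:counting} to get uniform integrability and hence $\E Z^{\mathrm{line}}_{t_T,T}([-1,1])\to2$, whereas you need only the Fatou-type inequality $\E Z(J)\le\liminf\E M_T^{\mathrm{line}}(J)$ for open $J$. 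That inequality holds directly under convergence in law, so your subsequence detour is harmless but unnecessary. Your one-sided window $(0,1)$ also spares you the separate $O(L)$ count near the upper endpoint $T$.
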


\begin{proof}
The mass of the process in \eqref{eq:zeta-line-process} on this interval is bounded by $M_T([-1,1])$, whose second moments are uniformly bounded by Lemma~\ref{lem:counting}. The mean measure assumption gives $\E Z([-1,1])=2$ and almost surely no atom at either endpoint. Vague convergence and uniform integrability therefore imply
\[
 \E Z^{\mathrm{line}}_{t_T,T}([-1,1])\longrightarrow2.
\]
On the other hand, \eqref{eq:zeta-counting-mean} gives $\E M_T([-1,1])\to2$. If $M_T^{\mathrm{off}}$ denotes the restriction of the sum defining $M_T$ to off-line zeros, subtraction gives
\[
 \E M_T^{\mathrm{off}}([-1,1])\longrightarrow0.
\]
Every off-line zero with
\[
 T/L+2\pi/L\le\gamma\le T-2\pi/L
\]
is counted in $M_T^{\mathrm{off}}([-1,1])$ for a set of heights in $I_T$ of length $4\pi/L$. Hence the number of off-line zeros in this interior interval, with multiplicity, is at most
\[
 \frac{|I_T|L}{4\pi}\,\E M_T^{\mathrm{off}}([-1,1])=o(TL).
\]
The total number of zeros below $T/L+2\pi/L$ is $O(T)$ by the Riemann--von Mangoldt formula, and the upper endpoint interval has $O(L)$ zeros by \eqref{eq:zeta-unit-count}. It follows that $N_{\mathrm{off}}(T)=o(TL)$. Corollary~\ref{cor:density-one} completes the proof.
\end{proof}

The proof uses only $\E Z^{\mathrm{line}}_{t_T,T}([-1,1])\to2$. Conversely, density one implies this condition, since
\[
 \E M_T^{\mathrm{off}}([-1,1])
 \le\frac{4\pi}{|I_T|L}N_{\mathrm{off}}(2T)=o(1).
\]

We finish with the field-level statement. Let $\mathcal S$ be the sine-kernel point process of intensity one, and define the stochastic zeta function by
\begin{equation}\label{eq:zeta-stochastic-product}
 \xi_\infty(s)=e^{\ii\pi s}
       \lim_{A\to\infty}\prod_{\substack{x\in\mathcal S\\|x|\le A}}
                         \left(1-\frac{s}{x}\right).
\end{equation}
The limit exists almost surely, locally uniformly in $s$. Indeed, the sine-process counting estimates used in Corollary~\ref{cor:models} give convergence of $\pv\sum_{x\in\mathcal S}x^{-1}$, while finite intensity gives $\sum_{|x|>1}x^{-2}<\infty$ almost surely. Expanding the logarithm on the tail then gives locally uniform convergence of the product. This is the normalization of \cite[Theorem~1.5]{CNN}.

For $t\in I_T$ outside the zero ordinates, set
\begin{equation}\label{eq:zeta-pole-cancelled-field}
 w_{t,T}(s)=\frac12+\ii\left(t-\frac{2\pi s}{L}\right),
 \qquad
 \widehat F_{t,T}(s)=
 \frac{(w_{t,T}(s)-1)\zeta(w_{t,T}(s))}
      {(w_{t,T}(0)-1)\zeta(w_{t,T}(0))}.
\end{equation}
The factor $w_{t,T}(s)-1$ cancels the pole of $\zeta$, so $\widehat F_{t,T}$ is entire and equals one at zero. Its zeros arising from nontrivial zeros $\rho$ of $\zeta$ are at $s=-z_{\rho,T}$; on the critical line this is the reflected coordinate $(t-\gamma)L/(2\pi)$ in \eqref{eq:zeta-line-process}. The sine process is invariant in law under reflection.

\begin{corollary}[The stochastic-zeta field and its Cauchy coordinate]\label{cor:stochastic-zeta}
Assume
\[
 \widehat F_{t_T,T}\Longrightarrow\xi_\infty
\]
in the space $\Hol(\C)$ of entire functions with the compact-open topology. Then
\begin{equation}\label{eq:zeta-joint-field-law}
 \bigl(\widehat F_{t_T,T},A_T\bigr)
 \Longrightarrow
 \left(\xi_\infty,\frac{\ii\pi-\xi_\infty'(0)}\pi\right)
 \qquad\text{in }\Hol(\C)\times\C.
\end{equation}
The second coordinate is standard Cauchy. The same joint convergence holds with $t_T$ replaced by $T\omega$, where $\omega$ is uniform on $[0,1]$.
\end{corollary}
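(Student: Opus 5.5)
Since $\widehat F_{t,T}(0)=1$ and $\widehat F_{t,T}$ is entire, the plan is to read $A_T$ off as a continuous functional of the field, namely its logarithmic derivative at zero. Differentiating \eqref{eq:zeta-pole-cancelled-field} at $s=0$ uses $w'_{t,T}(s)=-2\pi\ii/L$ and gives
\[
\widehat F'_{t,T}(0)=-\frac{2\pi\ii}{L}\left(\frac{\zeta'}{\zeta}(w_{t,T}(0))+\frac1{w_{t,T}(0)-1}\right).
\]
Here $w_{t,T}(0)=1/2+\ii t$. For $t\in I_T$, i.e. $t\ge T/L$, the pole term is $O(L^{-1}\cdot L/T)=O(1/T)$ after multiplication by $2\pi/L$, so
\[
\frac{2\ii}{L}\frac{\zeta'}{\zeta}\Bigl(\tfrac12+\ii t\Bigr)=-\frac{\widehat F'_{t,T}(0)}{\pi}+O(1/T).
\]
Hence $A_T=\bigl(\ii\pi-\widehat F'_{t_T,T}(0)\bigr)/\pi+O(1/T)$ deterministically.

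The map $f\mapsto(f,(\ii\pi-f'(0))/\pi)$ from $\Hol(\C)$ to $\Hol(\C)\times\C$ is continuous, because differentiation at a point is continuous in the compact-open topology by Cauchy's formula. The continuous mapping theorem applied to the assumed convergence $\widehat F_{t_T,T}\Rightarrow\xi_\infty$ then gives joint convergence of $(\widehat F_{t_T,T},(\ii\pi-\widehat F'_{t_T,T}(0))/\pi)$. A deterministic $O(1/T)$ perturbation of the second coordinate does not change the limit, by Slutsky, which proves \eqref{eq:zeta-joint-field-law}.

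It remains to identify the law of the second coordinate, and this is where the main obstacle lies. Two routes are available.

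\emph{Direct route.} Compute $\xi_\infty'(0)$ from \eqref{eq:zeta-stochastic-product}. The logarithmic derivative at zero is $\ii\pi-\pv\sum_{x\in\mathcal S}x^{-1}$, and the product equals one at zero (zero is almost surely not an atom), so $\xi'_\infty(0)$ is this quantity. Therefore
\[
\frac{\ii\pi-\xi'_\infty(0)}{\pi}=\frac1\pi\pv\sum_{x\in\mathcal S}\frac1x=-\frac{F_{\mathcal S}(0)}{\pi}.
\]
Corollary~\ref{cor:models}(a), with $\lambda=1$ and $\beta=2$, shows that $F_{\mathcal S}(0)/\pi$ is standard Cauchy. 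By symmetry so is its negative, and the sign convention is harmless.

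\emph{Alternative route.} Note that the hypothesis forces the critical-line zero process to converge to the sine process, since zeros of the entire limit are limits of the zeros (Hurwitz). Corollary~\ref{cor:line-gue} then gives the Cauchy limit of $A_T$ directly, which must agree with the second coordinate by uniqueness of limits. I would present the direct computation and mention this as a consistency check.

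Finally, replacing $t_T$ by $T\omega$ changes the law of the whole pair, as a measurable pushforward of the height, by at most the total variation distance $1/L$, exactly as in the proof of Theorem~\ref{thm:zeta}. The only subtlety is that the hypothesis is stated for $t_T$, so the transfer must be applied to the joint law rather than re-derived. For heights $T\omega<T/L$ the formula for $A_T$ may fail, but this event has probability $1/L\to0$.
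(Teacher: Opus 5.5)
Your main argument is correct and is essentially the paper's proof. It uses the same exact identity for $\widehat F_{t_T,T}'(0)$, with the pole term bounded by $2/(Lt_T)\le 2/T$, and the same continuity of $f\mapsto(f,\ii-f'(0)/\pi)$ with the continuous-mapping theorem. It also uses the same computation $\xi_\infty'(0)=\ii\pi-\pv\sum_{x\in\mathcal S}x^{-1}$ from locally uniform convergence of the product, combined with Corollary~\ref{cor:models} and symmetry, and the same $1/L$ total-variation transfer to $T\omega$.

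Drop the ``alternative route'', though: it is unnecessary and its key claim is unjustified. Hurwitz only gives convergence of the full complex divisor $\{-z_{\rho,T}\}$, which also contains zeros at horizontal distance $o(1/\log T)$ from the critical line. So the hypothesis does not force $Z^{\mathrm{line}}_{t_T,T}$ to converge; through Corollary~\ref{cor:line-gue} that would yield density one, $N_{\mathrm{off}}(T)=o(T\log T)$, and nothing in the hypothesis excludes a positive proportion of such slightly off-line zeros.
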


\begin{proof}
Put $w_0=1/2+\ii t_T$. Differentiation of \eqref{eq:zeta-pole-cancelled-field} gives the exact identity
\[
 \widehat F_{t_T,T}'(0)
 =-\frac{2\pi\ii}{L}\left\{
                 \frac{\zeta'}{\zeta}(w_0)+\frac1{w_0-1}\right\}.
\]
Consequently,
\begin{equation}\label{eq:zeta-field-coordinate}
 A_T=\ii-\frac{\widehat F_{t_T,T}'(0)}\pi
             -\frac{2\ii}{L(w_0-1)}.
\end{equation}
The final term is uniformly bounded in modulus by $2/(Lt_T)\le2/T$. The map
\[
 f\longmapsto\bigl(f,\ii-f'(0)/\pi\bigr)
\]
is continuous for the stated topologies. The continuous-mapping theorem and \eqref{eq:zeta-field-coordinate} prove \eqref{eq:zeta-joint-field-law}.

By locally uniform convergence in \eqref{eq:zeta-stochastic-product},
\[
 \xi_\infty'(0)=\ii\pi-\pv\sum_{x\in\mathcal S}\frac1x.
\]
The second coordinate in \eqref{eq:zeta-joint-field-law} is therefore $\pi^{-1}\pv\sum_{x\in\mathcal S}x^{-1}$, which is standard Cauchy by Corollary~\ref{cor:models} and symmetry. Finally, the total variation distance $1/L$ between the two height distributions bounds the distance between their pushforwards under the joint statistic in \eqref{eq:zeta-joint-field-law}. This proves the last assertion.
\end{proof}

\begin{remark}[Scope of the conclusions]\label{rem:zeta-scope}
Theorem~\ref{thm:projected} is unconditional. The horizontal-collapse conditions of Proposition~\ref{prop:collapse-equivalence} are hypotheses on the zeros, and Theorem~\ref{thm:zeta} does not assert an unconditional Cauchy limit for the actual logarithmic derivative. The critical-line and field-convergence assumptions in Corollaries~\ref{cor:line-gue} and \ref{cor:stochastic-zeta} give separate sufficient routes, with complete proofs here. The relation between zero-process convergence and convergence of normalized holomorphic functions is developed further in the companion paper \cite{Companion}.

The Cauchy coordinate in \eqref{eq:zeta-joint-field-law} is a measurable functional of $\xi_\infty$. A scalar Cauchy marginal cannot determine its zero correlations: the stationary results above give the same normalized one-point law for processes as different as Poisson and sine processes. Likewise, negligibility of the correction in \eqref{eq:zeta-correction} is sufficient for the comparison argument, but is not asserted to be necessary for the marginal limit itself. If that correction has a nonzero limit, its joint behavior with the projected transform must instead be taken into account.
\end{remark}

\begin{example}[Why off-axis zeros cannot simply be projected]\label{ex:zeta-off-axis}
Let $U$ be uniform on $[0,1]$, fix $a>0$, and consider the stationary off-axis divisor
\[
 \{n+U+\ii a:n\in\Z\}\ \cup\ \{n+U-\ii a:n\in\Z\}.
\]
Its projected measure has intensity two. At a fixed real $s$, its principal-value transform divided by $2\pi$ is $\cot(\pi(s-U))$, which is standard Cauchy. By contrast, the logarithmic derivative associated with the off-axis divisor, with the same normalization, is
\[
 \frac12\{\cot(\pi(s-U+\ii a))+\cot(\pi(s-U-\ii a))\}
 =\Re\cot(\pi(s-U+\ii a)).
\]
Its modulus is bounded by $1/\sinh(2\pi a)$. Thus stationarity and bounded counting discrepancy do not suffice to justify horizontal projection. Control of the off-axis contribution is required for that comparison.
\end{example}

\section{Function-field zeta functions}
\label{sec:functionfields}

The finite-circle identity has an immediate arithmetic application to polynomials whose reciprocal roots have the same absolute value.  It gives an exact law for each pure cohomological factor of the zeta function of a function field.  For the full zeta function, Poincar\'e duality allows us to pair the remaining factors and to estimate their contribution.  We first give the polynomial identity, then treat curves, and finally give the corresponding statement in arbitrary dimension.

\begin{theorem}[A polynomial with reciprocal roots on one circle]
\label{thm:pure}
Let $q>1$, let $d\geq1$ be an integer, and let $\sigma_0\in\R$.  Suppose that
\[
 P(u)=\prod_{j=1}^{d}(1-\alpha_j u),
 \qquad |\alpha_j|=q^{\sigma_0},
 \qquad \Lambda(s)=P(q^{-s}).
\]
If $\tau$ is uniform on $[0,2\pi/\log q)$, then
\begin{equation}
\label{eq:pure-cauchy}
 \frac{2\ii}{d\log q}\frac{\Lambda'}{\Lambda}
      (\sigma_0+\ii\tau)+\ii
 \ \sim\ \Cauchy(0,1).
\end{equation}
If the coefficients or reciprocal roots are random and independent of $\tau$, the assertion holds conditionally on them, and the variable in \eqref{eq:pure-cauchy} is independent of them.
\end{theorem}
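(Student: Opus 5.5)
The plan is to reduce \eqref{eq:pure-cauchy} to Corollary~\ref{cor:circle} by an explicit change of variables. Write $u=q^{-s}$ and $s=\sigma_0+\ii\tau$, so that $u=q^{-\sigma_0}e^{-\ii\tau\log q}$. As $\tau$ runs uniformly over $[0,2\pi/\log q)$, the angle $\Theta=-\tau\log q$ is uniform modulo $2\pi$. Put $\beta_j=\alpha_j q^{-\sigma_0}$, which lies on the unit circle. Then $1-\alpha_ju=1-\beta_je^{\ii\Theta}$ with $\Theta$ uniform and independent of the roots.

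The key computation is the logarithmic derivative:
\[
\frac{\Lambda'}{\Lambda}(s)=\sum_{j=1}^d\frac{\alpha_j u\log q}{1-\alpha_ju},
\]
because $\mathrm du/\mathrm ds=-u\log q$. Write $\beta_j=e^{\ii\phi_j}$ and $x_j=\Theta+\phi_j$, so that $\alpha_ju=e^{\ii x_j}$. The identity
\[
\frac{e^{\ix}}{1-e^{\ii x}}=\frac{\ii}{2}\cot\frac x2-\frac12
\]
then gives
\[
\frac{2\ii}{d\log q}\frac{\Lambda'}{\Lambda}+\ii
=\frac{2\ii}{d}\sum_j\Bigl(\frac{\ii}{2}\cot\frac{x_j}{2}-\frac12\Bigr)+\ii
=-\frac1d\sum_{j=1}^d\cot\frac{\Theta+\phi_j}{2}.
\]
Since $\cot$ is odd, this equals $\frac1d\sum_j\cot\bigl((\Theta'-\theta_j)/2\bigr)$ with $\Theta'=-\Theta$ (again uniform) and $\theta_j=\phi_j$. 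The sign conventions need checking, but a sign flip is harmless because $\Cauchy(0,1)$ is symmetric.

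With this form, apply Proposition~\ref{prop:cotangent}. Use the angles $\theta_j/2$ modulo $\pi$ and equal weights $1/d$. The uniform angle $\Theta'/2$ modulo $\pi$ is uniform on $[0,\pi)$, which yields the Cauchy law exactly, conditionally on the roots. Equivalently, one can invoke Corollary~\ref{cor:circle} for the polynomial $\prod_j(z-\bar\beta_j)$ or similar. The conditional statement, and independence from the roots and coefficients, come directly from the conditional form of Proposition~\ref{prop:cotangent}. Almost surely $\tau$ avoids the finitely many poles, so the expression is well defined.

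The only delicate step is bookkeeping. One must confirm that the map $\tau\mapsto\Theta$ pushes uniform measure to uniform measure on a full period; it does, since $\tau\log q$ covers exactly $[0,2\pi)$. One must also track the signs and factors of $2$ so that the constant $+\ii$ cancels the $-\tfrac12$ terms exactly, leaving the real-valued cotangent average with no residual shift.
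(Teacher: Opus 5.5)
Your proof is correct and follows essentially the same route as the paper: compute $\Lambda'/\Lambda$ via $\mathrm{d}u/\mathrm{d}s=-u\log q$, use the identity $e^{\ii x}/(1-e^{\ii x})=\tfrac{\ii}{2}\cot\tfrac{x}{2}-\tfrac12$ to reduce the statistic to $\frac1d\sum_j\cot\frac{\tau\log q-\phi_j}{2}$, and apply Proposition~\ref{prop:cotangent} conditionally on the roots, with uniform angle $\tau\log q/2$ modulo $\pi$. Your sign bookkeeping is exactly right (the constant $\ii$ cancels and the result matches the paper's formula), so the appeal to the symmetry of the Cauchy law is not actually needed.
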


\begin{proof}
Conditioning if necessary, we may assume that the reciprocal roots are fixed.  Write $\alpha_j=q^{\sigma_0}e^{\ii\phi_j}$.  Direct differentiation gives
\[
 \frac{\Lambda'}{\Lambda}(s)
   =\log q\sum_{j=1}^{d}
      \frac{\alpha_jq^{-s}}{1-\alpha_jq^{-s}}.
\]
For $\psi\notin2\pi\Z$,
\[
 \frac{e^{\ii\psi}}{1-e^{\ii\psi}}
 =-\frac12+\frac{\ii}{2}\cot\frac{\psi}{2}.
\]
Consequently, away from the finitely many zeros in one period,
\[
 \frac{2\ii}{d\log q}\frac{\Lambda'}{\Lambda}
       (\sigma_0+\ii\tau)+\ii
 =\frac1d\sum_{j=1}^{d}
       \cot\frac{\tau\log q-\phi_j}{2}.
\]
The angle $\tau\log q/2$ is uniform modulo $\pi$.  Proposition~\ref{prop:cotangent} proves the assertion.  Since the conditional law does not depend on the reciprocal roots, it also proves the stated independence.
\end{proof}

For the rest of this section, $q$ is a prime power. Let $X$ be a smooth projective variety of dimension $d$ over $\mathbf F_q$, let $\ell$ be a prime not dividing $q$, and put $X_{\overline{\mathbf F}_q}=X\times_{\mathbf F_q}\overline{\mathbf F}_q$. The $\ell$-adic étale cohomology
\[
 V_{X,j}=H^j_{\mathrm{\acute{e}t}}
       (X_{\overline{\mathbf F}_q},\mathbf Q_\ell),
\]
the $\ell$-adic analogue of singular cohomology, is a finite-dimensional $\mathbf Q_\ell$-vector space, zero unless $0\le j\le 2d$. The $q$-power Frobenius $F_q\colon X\to X$ ($f\mapsto f^q$ on functions) induces the geometric Frobenius $F_q^*$ on each $V_{X,j}$; the arithmetic Frobenius $x\mapsto x^q$ of $\operatorname{Gal}(\overline{\mathbf F}_q/\mathbf F_q)$ acts as its inverse. We use the geometric convention: the eigenvalues of $F_q^*$ on $V_{X,j}$ have absolute value $q^{j/2}$, not $q^{-j/2}$. Put
\begin{equation}
\label{eq:cohomological-factors}
 b_j=\dim_{\mathbf Q_\ell}V_{X,j},\qquad
 P_{X,j}(u)=\det\bigl(I-uF_q^*\mid V_{X,j}\bigr),
 \qquad \Lambda_{X,j}(s)=P_{X,j}(q^{-s}).
\end{equation}
Thus $b_j$ is the $j$-th Betti number of $X$, and if $\alpha_{j,1},\dots,\alpha_{j,b_j}$ are the eigenvalues of $F_q^*$ on $V_{X,j}$ then $P_{X,j}(u)=\prod_i(1-\alpha_{j,i}u)$, of degree $b_j$ with constant term $1$. By Deligne's theorem the $P_{X,j}$ have integer coefficients and are independent of $\ell$, so the $\alpha_{j,i}$ are algebraic integers, and $|\alpha_{j,i}|=q^{j/2}$ under every embedding into $\mathbf C$; we accordingly regard the $P_{X,j}$ as complex polynomials, with all roots on the circle $|u|=q^{-j/2}$. Under $u=q^{-s}$ each $P_{X,j}$ becomes the entire function $\Lambda_{X,j}$, whose zeros are the solutions of $q^{s}=\alpha_{j,i}$: $b_j$ vertical arithmetic progressions of step $2\pi i/\log q$, all on the line $\operatorname{Re}s=j/2$.

\begin{corollary}[Pure cohomological factors]
\label{cor:cohomology}
Let $X/\mathbf F_q$ be smooth and projective, and let $r\geq0$ satisfy $b_r>0$.  For $\tau$ uniform on $[0,2\pi/\log q)$,
\begin{equation}
\label{eq:cohomological-cauchy}
 \frac{2\ii}{b_r\log q}
 \frac{\Lambda_{X,r}'}{\Lambda_{X,r}}
       \left(\frac r2+\ii\tau\right)+\ii
 \ \sim\ \Cauchy(0,1).
\end{equation}
For a random variety over a fixed field $\mathbf F_q$, independent of $\tau$ and satisfying $b_r>0$ almost surely, the assertion holds conditionally on $X$, and the variable in \eqref{eq:cohomological-cauchy} is independent of $X$.
\end{corollary}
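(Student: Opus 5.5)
This is a direct specialization of Theorem~\ref{thm:pure}. We take $P=P_{X,r}$, $d=b_r\ge1$, and $\sigma_0=r/2$. Then $\Lambda=\Lambda_{X,r}$, the reciprocal roots are the eigenvalues $\alpha_{r,i}$ of $F_q^*$ on $V_{X,r}$, and the hypothesis $|\alpha_j|=q^{\sigma_0}$ is exactly Deligne's purity statement $|\alpha_{r,i}|=q^{r/2}$, recalled before the corollary. Two points need checking. First, we fix a complex embedding of the algebraic integers $\alpha_{r,i}$; purity holds under every embedding, and $P_{X,r}$ has integer coefficients, so $\Lambda_{X,r}$ is a well-defined complex entire function independent of this choice. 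Second, the constant term of $P_{X,r}$ is $1$ and its degree is exactly $b_r$, because $F_q^*$ is invertible on $V_{X,r}$ (all its eigenvalues are nonzero, having modulus $q^{r/2}$). With these in place, \eqref{eq:pure-cauchy} becomes \eqref{eq:cohomological-cauchy} verbatim. The formula is understood away from the finitely many $\tau$ in a period at which $\Lambda_{X,r}$ vanishes, a null set.

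For the random statement we condition on $X$. The variety enters only through the finite family of reciprocal roots $(\alpha_{r,i})_{i\le b_r}$, that is, through the integer polynomial $P_{X,r}$. Since $\tau$ is independent of $X$, the conditional law of $\tau$ given $X$ is still uniform on $[0,2\pi/\log q)$. The conditional form of Theorem~\ref{thm:pure} (equivalently Proposition~\ref{prop:cotangent} applied conditionally) then gives a conditional $\Cauchy(0,1)$ law on the almost sure event $b_r>0$. Because this conditional law does not depend on $X$, the variable is independent of $X$.

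The only step I expect to need care is measurability in the random case: the map $X\mapsto P_{X,r}$ must be measurable, so that conditioning is meaningful. With $q$ fixed, $P_{X,r}$ takes values in a countable set of integer polynomials, so this is immediate once we fix any measurable structure on the random variety for which $X\mapsto P_{X,r}$ is measurable, for instance by conditioning directly on $P_{X,r}$. The rest is bookkeeping of normalizations: $d\log q$ becomes $b_r\log q$, and the observation line is $\operatorname{Re}s=r/2$.
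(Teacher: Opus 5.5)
Your proposal is correct and follows the paper's own proof: apply Theorem~\ref{thm:pure} with $d=b_r$ and $\sigma_0=r/2$, using Deligne's purity for integrality of the coefficients and for $|\alpha|=q^{r/2}$, and argue conditionally on $X$ in the random case. Your extra checks (exact degree $b_r$, independence from the complex embedding, measurability via countably many integer polynomials) are sound details that the paper leaves implicit.
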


\begin{proof}
Deligne's purity theorem \cite[Th\'eor\`eme~(1.6)]{Deligne} gives both the assertion about the coefficients of $P_{X,r}$ and the equality $|\alpha|=q^{r/2}$ for every complex reciprocal root.  Apply Theorem~\ref{thm:pure} with $d=b_r$ and $\sigma_0=r/2$, conditionally on $X$ in the random case.
\end{proof}

If $X$ is of pure dimension $n$, its full zeta function, in the variable $s$, is
\begin{equation}
\label{eq:full-cohomological-factorization}
 Z_X(s)=\prod_{j=0}^{2n}\Lambda_{X,j}(s)^{(-1)^{j+1}}.
\end{equation}
The logarithmic derivative is therefore a signed sum of contributions from different purity lines.  The exact law of a single factor does not determine the law of this sum.  For curves, the remaining factors are explicit; in higher dimension, we shall use their duality in addition to purity.

Let $C/\mathbf F_q$ be a smooth, projective, geometrically connected curve of genus $g\geq1$.  Write
\begin{equation}
\label{eq:curve-zeta-factorization}
 Z_C(s)=\frac{P_C(q^{-s})}{(1-q^{-s})(1-q^{1-s})},
 \qquad
 P_C(u)=\prod_{j=1}^{2g}(1-\alpha_j u),
 \qquad \Lambda_C(s)=P_C(q^{-s}).
\end{equation}
Weil's theorem gives $|\alpha_j|=\sqrt q$; see \cite{Weil}.  We denote the Kolmogorov distance between probability measures on $\R$ by
\[
 \dK(\mu,\nu)
   =\sup_{x\in\R}
       |\mu(({-\infty},x])-\nu(({-\infty},x])|.
\]

\begin{corollary}[Curves over finite fields]
\label{cor:curves}
For $\tau$ uniform on $[0,2\pi/\log q)$, the variable
\begin{equation}
\label{eq:curve-numerator-cauchy}
 C_C=\frac{\ii}{g\log q}\frac{\Lambda_C'}{\Lambda_C}
           \left(\frac12+\ii\tau\right)+\ii
\end{equation}
is exactly standard Cauchy.  Define
\begin{equation}
\label{eq:curve-full-variable}
 Y_C=\frac{\ii}{g\log q}\frac{Z_C'}{Z_C}
           \left(\frac12+\ii\tau\right)
          +\ii\left(1-\frac1g\right),
 \qquad
 \delta_{q,g}=\frac{2\sqrt q}{g(q-1)}.
\end{equation}
Then $Y_C$ is real almost surely, and the coupling through the same $\tau$ satisfies $|Y_C-C_C|\leq\delta_{q,g}$ almost surely.  Moreover,
\begin{align}
\label{eq:curve-bl-bound}
 \dBL\bigl(\Law(Y_C),\Cauchy(0,1)\bigr)
 &\leq \frac{2}{\pi g}
          \log\frac{\sqrt q+1}{\sqrt q-1}
 \leq\delta_{q,g}
 \leq\frac{2\sqrt2}{g},
 \\
\label{eq:curve-kolmogorov-bound}
 \dK\bigl(\Law(Y_C),\Cauchy(0,1)\bigr)
 &\leq\frac2\pi\arctan\frac{\delta_{q,g}}2
 \leq\frac{2\sqrt q}{\pi g(q-1)}
 \leq\frac{2\sqrt2}{\pi g}.
\end{align}
Thus $Y_C$ converges in distribution to a standard Cauchy variable along every sequence of curves for which $g(q-1)/\sqrt q\to\infty$.  In particular, this holds as $g\to\infty$, uniformly in $q$, and as $q\to\infty$ with fixed positive genus.
\end{corollary}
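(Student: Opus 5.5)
The plan is to get the exact law of $C_C$ from Theorem~\ref{thm:pure}, then to compute $Y_C-C_C$ explicitly from the two elementary factors in \eqref{eq:curve-zeta-factorization}, and finally to convert the almost-sure coupling bound into the two distance bounds. For the first step, $P_C$ has degree $2g$ and reciprocal roots of modulus $q^{1/2}$ by Weil's theorem. So Theorem~\ref{thm:pure} with $d=2g$ and $\sigma_0=1/2$ applies, and its normalization $2\ii/(2g\log q)=\ii/(g\log q)$ is exactly the one in \eqref{eq:curve-numerator-cauchy}. Hence $C_C$ is standard Cauchy.

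For the coupling I would differentiate \eqref{eq:curve-zeta-factorization}. This gives
\[
\frac{Z_C'}{Z_C}=\frac{\Lambda_C'}{\Lambda_C}-\log q\Bigl(\frac{a}{1-a}+\frac{b}{1-b}\Bigr),\qquad a=q^{-s},\ b=q^{1-s}.
\]
At $s=1/2+\ii\tau$, put $r=q^{-1/2}$ and $w=e^{-\ii\tau\log q}$; then $a=rw$ and $b=w/r=1/\bar a$. The key observation is $b/(1-b)=-1/(1-\bar a)=-1-\bar a/(1-\bar a)$, so the bracket equals
\[
-1+2\ii\operatorname{Im}\frac{a}{1-a}.
\]
Multiplying by $-\ii/g$, the constant $-1$ produces $\ii/g$, which combines with $\ii(1-1/g)$ to give exactly the $+\ii$ in $C_C$. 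Therefore
\[
Y_C=C_C+\frac2g\operatorname{Im}\frac{a}{1-a}.
\]
Since $C_C$ is real (it is a sum of cotangents), $Y_C$ is real almost surely. With $\varphi=-\tau\log q$, which is uniform modulo $2\pi$, we have
\[
\operatorname{Im}\frac{a}{1-a}=\frac{r\sin\varphi}{1-2r\cos\varphi+r^2}.
\]
Maximizing over $\varphi$ gives $r/(1-r^2)=\sqrt q/(q-1)$, hence $|Y_C-C_C|\le\delta_{q,g}$.

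For \eqref{eq:curve-bl-bound} I would use the coupling, so that $\dBL\le\E|Y_C-C_C|$. The average of the absolute value of the Poisson-type term is computed by symmetry and the antiderivative $\tfrac1{2}\log(1-2r\cos\varphi+r^2)$:
\[
\frac1{2\pi}\int_0^{2\pi}\Bigl|\frac{r\sin\varphi}{1-2r\cos\varphi+r^2}\Bigr|\,\mathrm d\varphi=\frac1\pi\log\frac{1+r}{1-r}.
\]
This gives $\frac{2}{\pi g}\log\frac{\sqrt q+1}{\sqrt q-1}$. Next, $\log\frac{1+r}{1-r}=2\operatorname{artanh}r\le 2r/(1-r^2)$, which bounds the first expression by $(2/\pi)\delta_{q,g}\le\delta_{q,g}$. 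Finally $\sqrt q/(q-1)\le\sqrt2$ for $q\ge2$.

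For \eqref{eq:curve-kolmogorov-bound}, the inclusions $\{C_C\le x-\delta\}\subset\{Y_C\le x\}\subset\{C_C\le x+\delta\}$ bound $\dK$ by $\sup_x\bigl(F(x+\delta)-F(x)\bigr)$, where $F$ is the Cauchy distribution function. By unimodality and symmetry this supremum is attained on $[-\delta/2,\delta/2]$, where it equals $(2/\pi)\arctan(\delta/2)$. The remaining inequalities follow from $\arctan y\le y$ and the bound on $\delta_{q,g}$ above. The convergence statements are then immediate from $\delta_{q,g}\to0$. The only delicate step is the identity $b=1/\bar a$, which makes the real part of the correction cancel exactly against the centering $\ii(1-1/g)$; once that is in place, everything else is explicit calculus.
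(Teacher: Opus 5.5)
Your proposal is correct and follows the paper's proof step by step: the exact law of $C_C$ comes from Theorem~\ref{thm:pure} with $d=2g$, and the identity $q^{1-s}=1/\overline a$ makes the correction equal to the real quantity $\frac2g\Im\frac{a}{1-a}$. The remaining steps also match: maximizing gives $r/(1-r^2)$, integrating gives the exact $L^1$ cost of the coupling, and the sandwich $F(x-\delta)\le\P(Y_C\le x)\le F(x+\delta)$ gives the Kolmogorov bound. The only cosmetic difference is that you bound $\frac{2}{\pi g}\log\frac{1+r}{1-r}$ by $\delta_{q,g}$ using $\operatorname{artanh} r\le r/(1-r^2)$, whereas the paper simply notes that the $L^1$ cost is at most the almost-sure bound.
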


\begin{proof}
The exact law follows from Theorem~\ref{thm:pure} with $d=2g$ and $\sigma_0=1/2$.  Logarithmic differentiation of \eqref{eq:curve-zeta-factorization} gives
\[
 \frac1{\log q}
 \left(\frac{Z_C'}{Z_C}(s)-\frac{\Lambda_C'}{\Lambda_C}(s)\right)
 =-\frac{q^{-s}}{1-q^{-s}}-\frac{q^{1-s}}{1-q^{1-s}}.
\]
On the line $s=1/2+\ii\tau$, put $a=q^{-1/2}e^{-\ii\tau\log q}$ and $c=(1-a)^{-1}$.  Since $q^{1-s}=1/\overline a$, the right-hand side equals
\[
 1-c+\overline c=1-2\ii\Im c.
\]
The center in \eqref{eq:curve-full-variable} therefore yields the exact real correction
\begin{equation}
\label{eq:curve-real-correction}
 Y_C-C_C=\frac2g\Im c.
\end{equation}

For $0<r<1$, elementary maximization gives
\begin{equation}
\label{eq:circle-imaginary-bound}
 \sup_{\theta\in\R}
 \left|\Im\frac1{1-re^{-\ii\theta}}\right|
 =\sup_{\theta\in\R}
   \frac{r|\sin\theta|}{1-2r\cos\theta+r^2}
 =\frac r{1-r^2}.
\end{equation}
Indeed, on $[0,\pi]$ the derivative of the middle expression has the sign of $(1+r^2)\cos\theta-2r$, and substitution of its unique zero gives the last expression.  For $r=q^{-1/2}$, ${\|Y_C-C_C\|_{L^\infty}=\delta_{q,g}}$ with respect to $\tau$.  Integration gives
\begin{align*}
 \E|Y_C-C_C|
 &=\frac{2r}{\pi g}\int_0^\pi
       \frac{\sin\theta}{1-2r\cos\theta+r^2}\,\dd\theta
 =\frac{2}{\pi g}\log\frac{1+r}{1-r}.
\end{align*}
These are exact costs of this coupling; no optimality for $\dBL$ or $\dK$ is asserted.  The defining test functions for $\dBL$ give the first inequality in \eqref{eq:curve-bl-bound}; the remaining inequalities follow from the almost-sure bound and $q\geq2$.

Let $F(x)=1/2+\pi^{-1}\arctan x$ be the standard Cauchy distribution function.  The coupling implies
\[
 F(x-\delta_{q,g})\leq\P(Y_C\leq x)
       \leq F(x+\delta_{q,g}).
\]
For every $\delta\geq0$,
\begin{equation}
\label{eq:cauchy-continuity-modulus}
 \sup_x\{F(x+\delta)-F(x)\}
 =\frac2\pi\arctan\frac\delta2
 \leq\frac\delta\pi.
\end{equation}
The maximum is attained at $x=-\delta/2$.  This proves \eqref{eq:curve-kolmogorov-bound} and the convergence assertions.
\end{proof}

\begin{remark}
At fixed genus the exact statement concerns $\Lambda_C$, while the full zeta function is covered by the explicit approximation above.  The vertical density of the zeros of a degree-$d$ factor in Theorem~\ref{thm:pure} is $d\log q/(2\pi)$, counted with multiplicity.  Thus the coefficient of its logarithmic derivative in \eqref{eq:pure-cauchy} is $\ii$ divided by $\pi$ times this density, as in the stationary and number-field normalizations.
\end{remark}

We now extend the curve comparison to the full zeta function of a variety.  The central factor remains exactly Cauchy, and the factors on either side of the central line can be paired.  The hypothesis $b_n>0$ is automatic in even dimension, since the class of an ample divisor has a nonzero $n/2$-th power.  In odd dimension the middle factor can be absent.

\begin{theorem}[Full zeta functions in arbitrary dimension]
\label{thm:full-variety}
Let $X/\mathbf F_q$ be smooth, projective and geometrically connected, of pure dimension $n\geq1$, and assume $b_n>0$.  Let $\tau$ be uniform on $[0,2\pi/\log q)$ and, with the notation of \eqref{eq:cohomological-factors}, put
\[
 \chi_X=\sum_{j=0}^{2n}(-1)^j b_j,
 \qquad \varepsilon=(-1)^{n+1},
 \qquad s_\tau=\frac n2+\ii\tau.
\]
Define
\begin{align}
\label{eq:full-variety-variable}
 Y_X&=\frac{2\ii\varepsilon}{b_n\log q}
          \frac{Z_X'}{Z_X}(s_\tau)
          -\frac{\ii\varepsilon\chi_X}{b_n},
 \\
\label{eq:full-variety-error}
 \Delta_X&=\frac4{b_n}
      \sum_{j=0}^{n-1}
       \frac{b_jq^{(n-j)/2}}{q^{n-j}-1}.
\end{align}
Then $Y_X$ is real almost surely with respect to $\tau$.  Under the coupling through $\tau$, the standard Cauchy variable
\begin{equation}
\label{eq:middle-factor-variable}
 C_X=\frac{2\ii}{b_n\log q}
          \frac{\Lambda_{X,n}'}{\Lambda_{X,n}}(s_\tau)+\ii
\end{equation}
satisfies $|Y_X-C_X|\leq\Delta_X$ almost surely.  In particular,
\begin{align}
\label{eq:full-variety-distances}
 \dBL\bigl(\Law(Y_X),\Cauchy(0,1)\bigr)&\leq\Delta_X,
 &
 \dK\bigl(\Law(Y_X),\Cauchy(0,1)\bigr)
 &\leq\frac2\pi\arctan\frac{\Delta_X}{2}.
\end{align}
For any sequence of such varieties, $\Delta_X\to0$ implies $Y_X\Rightarrow\Cauchy(0,1)$.  In particular, this holds if
\begin{equation}
\label{eq:middle-betti-dominance}
 \frac1{b_n}\sum_{j<n}b_j\longrightarrow0,
\end{equation}
uniformly in the cardinality of the base field.
\end{theorem}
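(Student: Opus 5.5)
The plan is to follow the curve computation in Corollary~\ref{cor:curves}, using Poincar\'e duality to pair the cohomological factors symmetrically about the middle degree. Logarithmic differentiation of \eqref{eq:full-cohomological-factorization} gives
\[
 \frac{Z_X'}{Z_X}=\sum_{j=0}^{2n}(-1)^{j+1}\frac{\Lambda_{X,j}'}{\Lambda_{X,j}},
 \qquad
 \frac{\Lambda_{X,j}'}{\Lambda_{X,j}}(s)=\log q\sum_{i=1}^{b_j}\frac{\alpha_{j,i}q^{-s}}{1-\alpha_{j,i}q^{-s}}.
\]
The middle term $j=n$ carries the sign $(-1)^{n+1}=\varepsilon$. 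After multiplication by $2\ii\varepsilon/(b_n\log q)$ it therefore becomes exactly $C_X-\ii$. By Corollary~\ref{cor:cohomology} with $r=n$, $C_X$ is standard Cauchy, and by the proof of Theorem~\ref{thm:pure} it is a real cotangent average.

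For $j<n$, I will pair $V_{X,j}$ with $V_{X,2n-j}$. Poincar\'e duality gives $b_{2n-j}=b_j$, and the eigenvalues on $V_{X,2n-j}$ are $\{q^n/\alpha_{j,i}\}$. Since $P_{X,j}$ has real (integer) coefficients, this multiset equals $\{q^n/\overline{\alpha_{j,i}}\}$. On the line $s_\tau$, put $a=\alpha_{j,i}q^{-s_\tau}$, so that $|a|=r_j:=q^{-(n-j)/2}<1$ by purity, and set $c=(1-a)^{-1}$. The dual term is then $1/\overline a$, exactly as in the curve case. Thus $a/(1-a)+(1/\overline a)/(1-1/\overline a)=c-1-\overline c=2\ii\Im c-1$. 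Both factors of a pair carry the same sign $(-1)^{j+1}$, so the pair $(j,2n-j)$ contributes
\[
 (-1)^{j+1}\log q\Bigl(2\ii\sum_{i}\Im c_{j,i}-b_j\Bigr).
\]
After the normalization $2\ii\varepsilon/(b_n\log q)$, the $\Im c$ part becomes the real quantity
\[
 R_X=-\frac{4\varepsilon}{b_n}\sum_{j<n}(-1)^{j+1}\sum_i\Im c_{j,i},
\]
while the constants $-b_j$ give purely imaginary constants. The next step is to check that these constants, together with the $-\ii$ from the middle factor, are exactly cancelled by the centering $-\ii\varepsilon\chi_X/b_n$. This uses $\chi_X=(-1)^nb_n+2\sum_{j<n}(-1)^jb_j$, again by duality. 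This sign bookkeeping is the only delicate step, and I will verify it in a few lines. The outcome is the exact identity $Y_X=C_X+R_X$, so $Y_X$ is real almost surely.

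The bound comes from \eqref{eq:circle-imaginary-bound}, which gives $|\Im c_{j,i}|\le r_j/(1-r_j^2)=q^{(n-j)/2}/(q^{n-j}-1)$. Summing over $i$ and $j<n$ yields $|Y_X-C_X|\le\Delta_X$. The $\dBL$ bound then follows by testing Lipschitz functions on the coupling, and the Kolmogorov bound follows from the sandwich $F(x-\Delta_X)\le\P(Y_X\le x)\le F(x+\Delta_X)$ together with \eqref{eq:cauchy-continuity-modulus}, exactly as for curves. Convergence follows when $\Delta_X\to0$. For the uniformity claim, note that $m\ge1$ and $q\ge2$ imply $q^{m/2}/(q^m-1)\le\sqrt2$, so $\Delta_X\le(4\sqrt2/b_n)\sum_{j<n}b_j$, which tends to zero under \eqref{eq:middle-betti-dominance} independently of $q$.
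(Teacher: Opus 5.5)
Your proposal is correct and follows the paper's proof essentially step by step. The shared steps are: duality plus integrality of $P_{X,j}$ to list the degree-$(2n-j)$ eigenvalues as $q^n/\overline{\alpha}$; the pair identity $c-1-\overline{c}=-1+2\ii\Im c$; the duality form $\chi_X=(-1)^nb_n+2\sum_{j<n}(-1)^jb_j$, which cancels the imaginary constants (your deferred sign check does close and gives exactly $Y_X=C_X+R_X$); and \eqref{eq:circle-imaginary-bound} for the bound $|R_X|\le\Delta_X$.

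The only detail you leave implicit is why $Y_X$ is defined almost surely. The exceptional values of $\tau$ are the finitely many zeros of $\Lambda_{X,n}$ per period, since the factors with $j\neq n$ cannot vanish on $\operatorname{Re}s=n/2$ (there $|a|=r_j\neq1$), so these values form a null event.
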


\begin{proof}
Let $\varepsilon_j=(-1)^{j+1}$ and
\[
 L_j(s)=\frac1{\log q}
             \frac{\Lambda_{X,j}'}{\Lambda_{X,j}}(s).
\]
Poincar\'e duality gives $b_{2n-j}=b_j$ and pairs the eigenvalues on $V_{X,j}$ with $q^n/\alpha$ on $V_{X,2n-j}$; see, for example, the proof of \cite[Theorem~27.12]{Milne}.  Moreover, the reciprocal roots of $P_{X,j}$ form a multiset stable under complex conjugation, because $P_{X,j}$ has integer coefficients.  Thus the eigenvalues on $V_{X,2n-j}$ can also be listed as $q^n/\overline\alpha$, with $\alpha$ running over the eigenvalues on $V_{X,j}$.

Fix $j<n$, and write these latter eigenvalues as $\alpha_{j,k}$, $1\leq k\leq b_j$.  At $s=s_\tau$, set
\[
 a_{j,k}=\alpha_{j,k}q^{-s_\tau},
 \qquad c_{j,k}=\frac1{1-a_{j,k}},
 \qquad r_j=q^{-(n-j)/2}<1.
\]
Purity gives $|a_{j,k}|=r_j$.  The scaled eigenvalue from the paired degree is
\[
 \frac{q^n}{\overline{\alpha_{j,k}}}q^{-s_\tau}
       =\frac1{\overline{a_{j,k}}}.
\]
For each such pair, its contribution to $L_j+L_{2n-j}$ is
\[
 \frac{a_{j,k}}{1-a_{j,k}}
 +\frac{1/\overline{a_{j,k}}}{1-1/\overline{a_{j,k}}}
 =c_{j,k}-1-\overline{c_{j,k}}
 =-1+2\ii\Im c_{j,k}.
\]
Since $\varepsilon_{2n-j}=\varepsilon_j$, logarithmic differentiation of \eqref{eq:full-cohomological-factorization} yields
\begin{equation}
\label{eq:paired-full-logarithmic-derivative}
 \frac1{\log q}\frac{Z_X'}{Z_X}(s_\tau)
 =\varepsilon L_n(s_\tau)
   -\sum_{j<n}\varepsilon_j b_j
   +2\ii\sum_{j<n}\varepsilon_j
             \sum_{k=1}^{b_j}\Im c_{j,k}.
\end{equation}
The Euler characteristic can be written as
\[
 \chi_X=-\varepsilon b_n-2\sum_{j<n}\varepsilon_j b_j.
\]
Substitution into \eqref{eq:full-variety-variable} cancels all the constant imaginary terms and gives
\begin{equation}
\label{eq:full-variety-real-correction}
 Y_X-C_X
 =-\frac{4\varepsilon}{b_n}
       \sum_{j<n}\varepsilon_j
             \sum_{k=1}^{b_j}\Im c_{j,k}.
\end{equation}
Corollary~\ref{cor:cohomology} gives the exact standard Cauchy law of $C_X$.  The right-hand side of \eqref{eq:full-variety-real-correction} is real, and \eqref{eq:circle-imaginary-bound} bounds its absolute value by
\[
 \frac4{b_n}\sum_{j<n}\frac{b_jr_j}{1-r_j^2}
 =\Delta_X.
\]
The possible singularities in this argument occur only at the finitely many zeros of the middle factor in one period; they form a null event under the uniform choice of $\tau$.

The coupling bound for $\dBL$ and \eqref{eq:cauchy-continuity-modulus} prove \eqref{eq:full-variety-distances}.  Finally, for $q\geq2$ and every integer $k\geq1$,
\[
 \frac{q^{k/2}}{q^k-1}
 =\frac{q^{-k/2}}{1-q^{-k}}
 \leq\sqrt2.
\]
It follows that $\Delta_X\leq4\sqrt2\,b_n^{-1}\sum_{j<n}b_j$, which proves the last assertion.
\end{proof}

\begin{remark}
For $n=1$, one has $b_1=2g$ and $b_0=1$, so $\Delta_X=2\sqrt q/[g(q-1)]=\delta_{q,g}$, recovering the curve coupling bound.  Theorem~\ref{thm:full-variety} also applies conditionally to random varieties over a fixed field when the uniform angle is chosen independently.  For a fixed $X/\mathbf F_q$, extension of the base field to $\mathbf F_{q^m}$ leaves the Betti numbers unchanged, and \eqref{eq:full-variety-error}, with $q$ replaced by $q^m$, tends to zero as $m\to\infty$.
\end{remark}

\begin{corollary}[Smooth hypersurfaces]
\label{cor:hypersurfaces}
Fix $n\geq1$.  Let $X/\mathbf F_q$ be a smooth hypersurface of degree $d$ in $\mathbb P^{n+1}$, and suppose that
\begin{equation}
\label{eq:hypersurface-middle-betti}
 b_n(d)=\mathbf1_{\{n\text{ even}\}}
       +\frac{(d-1)^{n+2}+(-1)^n(d-1)}d>0.
\end{equation}
This condition fails precisely when $n$ is odd and $d\in\{1,2\}$, corresponding to hyperplanes and smooth quadrics.
For the variable $Y_X$ of Theorem~\ref{thm:full-variety},
\begin{equation}
\label{eq:hypersurface-error}
 \Delta_X
 =\frac4{b_n(d)}
   \sum_{\substack{0\leq j<n\\j\text{ even}}}
      \frac{q^{(n-j)/2}}{q^{n-j}-1}
 \leq\frac{4\sqrt2\,\lceil n/2\rceil}{b_n(d)}.
\end{equation}
In particular, as $d\to\infty$ through smooth hypersurfaces of dimension $n$, the law of $Y_X$ converges to $\Cauchy(0,1)$ uniformly in $q$ and in the hypersurface.  Both distances in \eqref{eq:full-variety-distances} are $O_n(d^{-(n+1)})$.
\end{corollary}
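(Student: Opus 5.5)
The plan is to apply Theorem~\ref{thm:full-variety} directly, so the work reduces to computing the Betti numbers of a smooth hypersurface and checking that hypothesis $b_n>0$ holds. For a smooth hypersurface $X\subset\mathbb P^{n+1}$ of degree $d$, the weak Lefschetz theorem for $\ell$-adic cohomology identifies $H^j(X)$ with $H^j(\mathbb P^{n+1})$ for $j<n$. Poincar\'e duality then gives the same identification for $j>n$. Hence $b_j=1$ for even $j\neq n$ and $b_j=0$ for odd $j\neq n$, and $X$ is geometrically connected since $n\ge1$. The middle Betti number follows from the Euler characteristic. The classical formula $\chi_X=\frac{(1-d)^{n+2}-1}{d}+n+2$ comes from the Chern class computation $c(T_X)=(1+h)^{n+2}/(1+dh)$. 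It is valid in $\ell$-adic cohomology by comparison with characteristic zero, or by smooth specialization, which preserves Betti numbers. Solving $\chi_X=\sum_{j\ne n}(-1)^jb_j+(-1)^nb_n$ yields \eqref{eq:hypersurface-middle-betti}. This is standard, and I would cite it rather than rederive it.

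Next I would check when $b_n(d)>0$. For even $n$ this always holds. For odd $n$ we have $b_n(d)=\big((d-1)^{n+2}-(d-1)\big)/d$. This vanishes exactly when $d-1\in\{0,1\}$, i.e. $d\in\{1,2\}$, and it is positive for $d\ge3$.

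Substituting into \eqref{eq:full-variety-error}, only even $j<n$ contribute, each with $b_j=1$, which gives the equality in \eqref{eq:hypersurface-error}. The number of even $j$ in $[0,n)$ is $\lceil n/2\rceil$. The bound $q^{k/2}/(q^k-1)\le\sqrt2$ for $q\ge2$, already shown at the end of the proof of Theorem~\ref{thm:full-variety}, then gives the inequality.

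Finally, $b_n(d)\ge c_n d^{n+1}$ for large $d$, since the numerator is $(d-1)^{n+2}+O(d)$. Hence $\Delta_X=O_n(d^{-n-1})$ uniformly in $q$ and in $X$. By \eqref{eq:full-variety-distances}, together with $\arctan x\le x$, both distances are $O_n(d^{-(n+1)})$, and convergence to $\Cauchy(0,1)$ follows.

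The only real obstacle is justifying the Betti numbers in positive characteristic. I expect to handle it by citing weak Lefschetz (SGA~7) or Deligne's Weil~I computation for hypersurfaces, rather than proving anything new.
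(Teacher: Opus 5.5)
Your proposal is correct and follows the paper's proof essentially step for step: weak Lefschetz and Poincar\'e duality give $b_j$ for $j\ne n$, and a cited formula gives $b_n(d)$. The paper quotes Ghorpade--Lachaud's primitive middle Betti number plus $\mathbf 1_{\{n\text{ even}\}}$, where you go through the Euler characteristic; from there both arguments substitute into $\Delta_X$ with $\lceil n/2\rceil$ even indices, use $q^{k/2}/(q^k-1)\le\sqrt2$, and conclude from $b_n(d)\sim d^{n+1}$. Your explicit check that $b_n(d)=0$ exactly for odd $n$ and $d\in\{1,2\}$ fills in a step the paper only asserts.
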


\begin{proof}
A smooth projective hypersurface of positive dimension is geometrically connected.  Weak Lefschetz and Poincar\'e duality give
\[
 b_j=\begin{cases}1,&j\text{ even},\\0,&j\text{ odd},\end{cases}
 \qquad j\ne n.
\]
The middle Betti number is given by \eqref{eq:hypersurface-middle-betti}.  These statements hold over every finite field; see \cite[Proposition~3.1 and Example~4.3(i)]{GhorpadeLachaud}, where the primitive middle Betti number is computed, and the full middle Betti number is obtained by adding $1$ when $n$ is even.  Applying Theorem~\ref{thm:full-variety} gives the equality in \eqref{eq:hypersurface-error}.  There are $\lceil n/2\rceil$ even integers in $[0,n)$, so the last inequality follows from the bound used at the end of its proof.  Finally, $b_n(d)\sim d^{n+1}$ for fixed $n$, proving the asserted rate.
\end{proof}

\begin{example}
For smooth degree-$d$ surfaces in $\mathbb P^3$, one has
\[
 b_2=d^3-4d^2+6d-2,\qquad \chi_X=b_2+2.
\]
Thus the normalized full zeta derivative
\[
 -\frac{2\ii}{b_2\log q}\frac{Z_X'}{Z_X}(1+\ii\tau)
       +\ii\left(1+\frac2{b_2}\right)
\]
can be coupled to a standard Cauchy variable with error at most $\Delta_X=\frac{4q}{b_2(q^2-1)}$, almost surely. At any fixed $q$, smooth hypersurfaces of unbounded degree are supplied, for example, by the Fermat equations
\[
 x_0^d+\cdots+x_{n+1}^d=0,
 \qquad \operatorname{char}(\mathbf F_q)\nmid d.
\]
Their partial derivatives do not vanish simultaneously at any projective point, which verifies smoothness and gives explicit sequences to which Corollary~\ref{cor:hypersurfaces} applies.
\end{example}

\appendix
\section{The uniform local Hadamard factorization}\label{app:factorization}

We give the factorization and uniform error estimate used in Lemma~\ref{lem:local-log}. Let $N(v)$ count all nontrivial zeros with ordinate in $(0,v]$, with multiplicity, and extend it oddly to $v<0$. The Riemann--von Mangoldt formula gives
\begin{equation}\label{eq:appendix-counting}
 N(v)=\Phi_0(v)+U(v),\qquad
 \Phi_0'(v)=\frac1{2\pi}\log\left|\frac v{2\pi}\right|,
 \qquad U(v)=O(\log(2+|v|)),
\end{equation}
where $\Phi_0(0)=0$ and its derivative is interpreted almost everywhere. The difference between $\Phi_0$ and $\Phi$ in \eqref{eq:zeta-counting-formula} is bounded. We use the right-continuous representative of the signed counting function in Stieltjes integrals. Changing conventions at cutoff ordinates affects only the endpoint error estimates below. No Riemann hypothesis is used. All sums and products run over distinct zeros, with multiplicity denoted by $m(\rho)$.

\begin{proposition}\label{prop:local-factorization}
Put $L=\log T$ and $s_t=1/2+\ii t$. For a nontrivial zero $\rho=\beta+\ii\gamma$, put
\[
 u_{\rho,t,T}=\frac{(\gamma-t)L}{2\pi},
 \qquad
 z_{\rho,t,T}=\frac{L}{2\pi\ii}(\rho-s_t).
\]
For every fixed $r\ge0$ and all sufficiently large $T$, there are functions $\varepsilon_{t,T}$ holomorphic on $|w|<r+1$ such that, uniformly for $t\in[T/L,T]$ with $\zeta(s_t)\ne0$ and $|w|\le r$,
\begin{equation}\label{eq:local-factorization}
 \frac{\zeta(s_t-2\pi\ii w/L)}{\zeta(s_t)}
 =e^{\ii\pi w}(1+\varepsilon_{t,T}(w))
 \prod_{|u_{\rho,t,T}|\le L^2}
 \left(1+\frac w{z_{\rho,t,T}}\right)^{m(\rho)},
\end{equation}
and
\begin{equation}\label{eq:local-factorization-error}
 \sup_{\substack{t\in[T/L,T]:\zeta(s_t)\ne0\\|w|\le r}}
 |\varepsilon_{t,T}(w)|
 =O_r\left(\frac{\log L}{L}\right).
\end{equation}
More precisely, for each admissible $t$,
\begin{equation}\label{eq:local-factorization-refined}
 \sup_{|w|\le r}|\varepsilon_{t,T}(w)|
 \le C_r\frac{1+\log(T/t)}L.
\end{equation}
In particular,
\begin{equation}\label{eq:local-log-uniform}
 \frac{2\ii}{L}\frac{\zeta'}{\zeta}(s_t)+\ii
 =-\frac1\pi\sum_{|u_{\rho,t,T}|\le L^2}
 \frac{m(\rho)}{z_{\rho,t,T}}
 +O\left(\frac{\log L}{L}\right)
\end{equation}
uniformly for the same heights. Its remainder is also bounded by $C(1+\log(T/t))/L$.
\end{proposition}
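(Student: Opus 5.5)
We will work with the logarithmic derivative rather than the quotient itself, and integrate at the end. The classical partial-fraction formula for $\zeta'/\zeta$ (Hadamard product for $\xi$) gives, for $s$ in the critical strip with $|\Im s|\ge 2$,
\[
 \frac{\zeta'}{\zeta}(s)=\sum_{\rho}m(\rho)\left(\frac1{s-\rho}+\frac1\rho\right)+B_0-\frac12\frac{\Gamma'}{\Gamma}\left(\frac s2+1\right)+\frac12\log\pi-\frac1{s-1},
\]
with Stirling giving $-\tfrac12\frac{\Gamma'}{\Gamma}(s/2+1)=-\tfrac12\log(|t|/2)+O(1/|t|)$ uniformly near $s_t$. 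Put $s=s_t-2\pi\ii w/L$, so that $s-\rho=-\tfrac{2\pi\ii}{L}(w+z_{\rho,t,T})$ and $\frac{\mathrm d}{\mathrm dw}=-\frac{2\pi\ii}{L}\frac{\mathrm d}{\mathrm ds}$. Then the local terms contribute $\sum m(\rho)/(w+z_\rho)$, matching the logarithmic derivative of the product in \eqref{eq:local-factorization}. The claim reduces to showing that
\[
 \Delta_t(w):=-\frac{2\pi\ii}{L}\frac{\zeta'}{\zeta}(s)-\sum_{|u_\rho|\le L^2}\frac{m(\rho)}{w+z_\rho}-\ii\pi
\]
is $O_r\bigl((1+\log(T/t))/L\bigr)$ on $|w|\le r+1/2$ (after removing near-poles, which are all inside the product). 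Then $1+\varepsilon_{t,T}(w)=\exp\int_0^w\Delta_t$ gives \eqref{eq:local-factorization}–\eqref{eq:local-factorization-refined}, and setting $w=0$ gives \eqref{eq:local-log-uniform}, since $\Delta_t(0)=-\tfrac{2\pi\ii}{L}\frac{\zeta'}{\zeta}(s_t)-\sum m/z_\rho-\ii\pi$, which after multiplying by $1/\pi$ is exactly that identity.

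The main work is the far-zero sum $\sum_{|u_\rho|>L^2}m(\rho)\bigl(\frac1{s-\rho}+\frac1\rho\bigr)$. Pair it with the archimedean term. I would use the standard trick of subtracting the same formula at a reference point $s^*=2+\ii t$, where $\zeta'/\zeta(s^*)=O(1)$. That eliminates $B_0$ and the $1/\rho$ terms and gives $\sum_\rho m(\rho)\bigl(\frac1{s-\rho}-\frac1{s^*-\rho}\bigr)$ plus a gamma difference of size $O(1/t)$. For zeros with $|\gamma-t|>2\pi L$ (i.e.\ $|u|>L^2$), write the sum as a Stieltjes integral against $\mathrm dN=\mathrm d\Phi_0+\mathrm dU$. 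The $U$ part is handled by integration by parts: the kernel derivative is $O(|\gamma-t|^{-2})$, and $U=O(\log(2+|v|))$, so it contributes $O(\log T/L)=O(1)$ at the $s$-scale and $O(1/L)$ after multiplying by $2\pi/L$. That is acceptable, but needs care for $\gamma$ far below, including negative ordinates; the bound $\log(2+|v|)$ is uniform there. Horizontal positions do not matter, because $\Re(s-\rho)$ is bounded and only the real direction of the kernel difference is relevant. The $\Phi_0$ part computes explicitly: $\frac1{2\pi}\int\log\frac{|v|}{2\pi}\bigl(\frac1{s-\frac12-\ii v}-\frac1{s^*-\frac12-\ii v}\bigr)\,\mathrm dv$ over $|v-t|>2\pi L$. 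Using $\log|v|/2\pi=\log t/2\pi+\log|v/t|$, the constant part yields the principal-value half-residue $-\tfrac12\log(t/2\pi)$ times the appropriate sign. This combines with $+\tfrac12\log(t/2\pi)$-type gamma terms and the near-window linear contribution to produce exactly $\ii\pi L/(2\pi)\cdot$normalization. The mismatch between $\log t$ and $L=\log T$ produces the $\log(T/t)/L$ error, since the density at height $t$ is $\log t/2\pi$ rather than $\log T/2\pi$. The $\log|v/t|$ part contributes $O(1)$ at $s$-scale after the pairing with $s^*$.

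The in-window zeros with $|u_\rho|\le L^2$ also have a reference term $-\sum m/(s^*-\rho)$, which must be shown to be $\Re$-real-ish. Its imaginary mean against $\mathrm d\Phi_0$ is part of the half-residue computed above, and its $U$ fluctuation is $O(\log T/1)\cdot O(1)$ by integration by parts with kernel derivative $O((1+|\gamma-t|)^{-2})$. The delicate point, and the one I expect to be the main obstacle, is bookkeeping of these $O(1)$ constants at the $s$-scale so that after multiplication by $2\pi/L$ everything is $O(1/L)$ except the $\log(T/t)$ density mismatch. In particular, the truncation at $|u|=L^2$ must be symmetric, so that the linear $\Phi_0$ contributions from the two sides cancel up to $O(L\cdot\log L/L^2\cdot\ldots)$. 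The stated $O(\log L/L)$ is the worst case $t=T/L$. Holomorphy of $\varepsilon_{t,T}$ on $|w|<r+1$ follows because all poles of $\Delta_t$ there are cancelled by the product zeros, and $\zeta(s_t)\ne0$ makes the normalization at $w=0$ legitimate.
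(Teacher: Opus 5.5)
There is a genuine gap, and it is in the step you flag as ``bookkeeping''. Your reduction to bounding $\Delta_t$ and the reconstruction $1+\varepsilon_{t,T}=\exp\int_0^w\Delta_t$ are fine; the trouble is the reference point.

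Once you subtract the explicit formula at $s^*=2+\ii t$, the archimedean terms cancel up to $O(1/t)$. So there is no ``$+\tfrac12\log(t/2\pi)$-type gamma term'' left to combine with. The whole main term $-\tfrac12\log(t/2\pi)$, which has to cancel the $-\ii\pi$ in $\Delta_t$, must now be extracted from the in-window reference sum $-\sum_{|u_\rho|\le L^2}m(\rho)/(s^*-\rho)$, with error $O(1)$ at the $s$-scale. (It does not come from the far-region $\Phi_0$ integral: there the kernel $1/(s-\rho)-1/(s^*-\rho)$ is $O(|\gamma-t|^{-2})$ and contributes only $O(1)$, with no half-residue.)

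The in-window reference sum has a kernel of size $\asymp1$ for $|\gamma-t|\lesssim1$. As you write yourself, integration by parts against $U=O(\log(2+|v|))$ only bounds its fluctuation by $O(\log T)$. After multiplication by $2\pi/L$ this is an $O(1)$ error in $\Delta_t$, not $O((1+\log(T/t))/L)$. This cannot be repaired using pointwise bounds on $U$ alone. Those bounds allow $\asymp\log T$ extra zeros within distance one of $t$, compensated a few units away, and such a configuration moves this sum by $\asymp\log T$. The sum is in fact $\tfrac12\log(t/2\pi)+O(1)$, but that follows only from $\zeta'/\zeta(s^*)=O(1)$ and the explicit formula at $s^*$, i.e.\ by undoing the subtraction you made.

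The fix is to drop the reference point and work with the symmetric expansion $\xi'/\xi(s)=\lim_{Y\to\infty}\sum_{|\gamma|\le Y}m(\rho)/(s-\rho)$. This is the logarithmic derivative of the symmetric Hadamard product the paper uses. You must re-centre the symmetric cutoff at $t$; the paper justifies this by noting that the two orderings differ only in intervals of length $O(1+t)$ near $\pm Y$, whose contribution vanishes. Then
\[
\frac{\zeta'}{\zeta}(s)-\sum_{|u_\rho|\le L^2}\frac{m(\rho)}{s-\rho}
=\sum_{|u_\rho|>L^2}\frac{m(\rho)}{s-\rho}-\frac{A'}{A}(s),
\qquad A(s)=\tfrac12 s(s-1)\pi^{-s/2}\Gamma(s/2).
\]
Stirling gives $A'/A(s)=\tfrac12\log(t/2\pi)+O(1)$ on your disk. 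This cancels the $-\ii\pi$ in $\Delta_t$ up to the mismatch $\pi\log(2\pi T/t)/L$.

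The only zero sum left runs over $|\gamma-t|>2\pi L$. There the kernel is $O(1/L)$ at the cutoff and its derivative is $O(L^{-2})$. So $U=O(\log)$ suffices to bound the fluctuation by $O(1)$ at the $s$-scale. The $\Phi_0$ mean reduces to a bounded multiple of the convergent integral $\int_0^\infty\log\left|\frac{1+y}{1-y}\right|\frac{\dd y}{y}$. This is exactly the paper's argument, written there in product form rather than for the logarithmic derivative.
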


\begin{proof}
We first control the symmetric tails of the product for the completed function
\[
 \xi(s)=\frac12s(s-1)\pi^{-s/2}\Gamma(s/2)\zeta(s).
\]
Its symmetric Hadamard product is
\begin{equation}\label{eq:xi-hadamard}
 \xi(s)=\xi(0)\lim_{Y\to\infty}
 \prod_{|\Im\rho|\le Y}\left(1-\frac s\rho\right)^{m(\rho)};
\end{equation}
see \cite[Chapter~II]{Titchmarsh}. Dividing the products at $s_t-2\pi\ii w/L$ and $s_t$ gives
\begin{equation}\label{eq:xi-ratio}
 \frac{\xi(s_t-2\pi\ii w/L)}{\xi(s_t)}
 =\lim_{B\to\infty}\prod_{|u_{\rho,t,T}|\le B}
 \left(1+\frac w{z_{\rho,t,T}}\right)^{m(\rho)}.
\end{equation}
To justify the change of center of the symmetric cutoff, keep $t,T$ fixed. At height $Y$, the difference of the two orderings is contained in intervals of bounded length $O(1+t)$ adjacent to $Y$ and $-Y$. They contain $O((1+t)\log(2+Y+t))$ zeros by \eqref{eq:appendix-counting}, and each logarithm of a ratio factor is $O_r((LY)^{-1})$. Their total contribution tends to zero. This proves \eqref{eq:xi-ratio}, locally uniformly in $w$.

For the rest of the proof abbreviate $u=u_{\rho,t,T}$ and $z=z_{\rho,t,T}$. Let $A\ge L+2(r+1)$ and $B>A$. We first estimate the projected reciprocal sum. Stieltjes integration of \eqref{eq:appendix-counting} over the two tails gives
\begin{equation}\label{eq:reciprocal-tail}
 \sum_{A<|u|\le B}\frac{m(\rho)}u
 =\frac1L\int_{2\pi A/(tL)}^{2\pi B/(tL)}
 \log\left|\frac{1+y}{1-y}\right|\frac{\dd y}{y}
 +O\left(\frac{L+\log(2+A)}A\right).
\end{equation}
Indeed, the smooth part is
\[
 \frac1L\int_{2\pi A/L}^{2\pi B/L}
 \frac{\log(t+v)-\log|t-v|}{v}\dd v,
\]
which is the integral displayed in \eqref{eq:reciprocal-tail}. For the error part, the bound
\[
 |U(t\pm2\pi x/L)|\le C(L+\log(2+x))
\]
and integration by parts against $1/x$ bound the endpoint terms and the remaining integral by $C(L+\log(2+A))/A$, uniformly in $B$. The integral
\[
 \int_0^\infty\log\left|\frac{1+y}{1-y}\right|\frac{\dd y}{y}
\]
is finite: its integrand is bounded at zero, has an integrable logarithmic singularity at one, and is $O(y^{-2})$ at infinity. Therefore the reciprocal sum in \eqref{eq:reciprocal-tail} is
\begin{equation}\label{eq:reciprocal-tail-bound}
 O\left(\frac{L+\log(2+A)}A+\frac1L\right).
\end{equation}

A dyadic application of \eqref{eq:appendix-counting} also gives
\begin{equation}\label{eq:square-tail-bound}
 \sum_{|u|>A}\frac{m(\rho)}{u^2}
 \le C\left\{\frac1A\left(1+\frac{\log(2+A)}L\right)
                 +\frac L{A^2}\right\},\qquad A\ge L.
\end{equation}
To see this, the number of zeros in $X<|u|\le2X$ is at most
\[
 C\left\{X\left(1+\frac{\log(2+X)}L\right)+L\right\}.
\]
The term $X\log(2+X)/L$ accounts for ordinates far above $T$. Dividing by $X^2$ and summing over $X=2^kA$ proves the bound.

For $|w|\le r$, Taylor's estimate
\[
 \log(1+w/u)=w/u+O_r(u^{-2})
\]
is uniform on these tails. Equations \eqref{eq:reciprocal-tail-bound}--\eqref{eq:square-tail-bound} therefore bound the logarithm of the projected product. To remove the projection, note that $|\Im z|\le L/(4\pi)$ and $\Re z=u$. Along the vertical segment joining $u$ to $z$, the derivative of $v\mapsto\log(1+w/v)$ has absolute value at most $C_r/u^2$. Hence
\[
 |\log(1+w/z)-\log(1+w/u)|\le C_rL/u^2.
\]
Combining the estimates yields
\begin{equation}\label{eq:complex-product-tail}
 \sup_{|w|\le r}
 \left|\sum_{A<|u|\le B}m(\rho)\log(1+w/z)\right|
 \le C_r\left\{\frac{L+\log(2+A)}A+\frac1L+\frac{L^2}{A^2}\right\}.
\end{equation}
The symmetric reciprocal sum converges as $B\to\infty$: the smooth integral in \eqref{eq:reciprocal-tail} converges, the endpoint terms $U(t\pm2\pi B/L)/B$ tend to zero, and the error integrals converge absolutely because $(L+\log(2+x))/x^2$ is integrable at infinity. All the quadratic remainders converge absolutely as well. Thus the logarithmic tails converge locally uniformly and define a nonvanishing holomorphic factor. Taking $A=L^2$ in \eqref{eq:complex-product-tail} gives
\begin{equation}\label{eq:xi-local-product}
 \frac{\xi(s_t-2\pi\ii w/L)}{\xi(s_t)}
 =(1+O_r(L^{-1}))
 \prod_{|u|\le L^2}(1+w/z)^{m(\rho)},
\end{equation}
where the multiplicative error is holomorphic and uniform in $t$.

Write $\xi(s)=A(s)\zeta(s)$, with
\[
 A(s)=\tfrac12s(s-1)\pi^{-s/2}\Gamma(s/2).
\]
Stirling's formula gives, uniformly on bounded $w$-disks and $t\in[T/L,T]$,
\begin{align*}
 \log\frac{A(s_t)}{A(s_t-2\pi\ii w/L)}
 &=\frac{2\pi\ii w}{L}
     \left\{\frac12\log\frac t{2\pi}+O(1)\right\}
       +O_r((tL^2)^{-1})\\
 &=\ii\pi w+O_r\left(\frac{1+\log(T/t)}L\right),
\end{align*}
since $\log(t/(2\pi))-L=-\log(T/t)-\log(2\pi)$. Multiplying \eqref{eq:xi-local-product} by this factor proves \eqref{eq:local-factorization} and \eqref{eq:local-factorization-refined}; the uniform estimate \eqref{eq:local-factorization-error} follows from $\log(T/t)\le\log L$. The construction on a slightly larger disk supplies the holomorphic extension, and $\varepsilon_{t,T}(0)=0$. Applying \eqref{eq:local-factorization-refined} with $r=1$, Cauchy's estimate gives $|\varepsilon'_{t,T}(0)|\le C(1+\log(T/t))/L$. Differentiating \eqref{eq:local-factorization} at zero yields
\[
 -\frac{2\pi\ii}{L}\frac{\zeta'}{\zeta}(s_t)
 =\ii\pi+\sum_{|u|\le L^2}\frac{m(\rho)}z
       +\varepsilon'_{t,T}(0),
\]
so the remainder in \eqref{eq:local-log-uniform} is $-\varepsilon'_{t,T}(0)/\pi$. This proves both assertions about it.
\end{proof}

\section*{Acknowledgements}
The authors thank Sasha Sodin for explaining the connection between Stieltjes transforms of sine processes and logarithmic derivatives, and for sharing ideas concerning the zeta-function application.


\begin{thebibliography}{99}

\bibitem{AW}
M.~Aizenman and S.~Warzel,
\newblock On the ubiquity of the Cauchy distribution in spectral problems,
\newblock \emph{Probab. Theory Related Fields} \textbf{163} (2015), 61--87.

\bibitem{Assiotis}
T.~Assiotis,
\newblock Random entire functions from random polynomials with real zeros,
\newblock \emph{Adv. Math.} \textbf{410} (2022), 108701.

\bibitem{BahrEsseen}
B.~von Bahr and C.-G. Esseen,
\newblock Inequalities for the $r$-th absolute moment of a sum of random
variables, $1\leq r\leq2$,
\newblock \emph{Ann. Math. Statist.} \textbf{36} (1965), 299--303.

\bibitem{Boole}
G.~Boole,
\newblock On the comparison of transcendents, with certain applications to
the theory of definite integrals,
\newblock \emph{Philos. Trans. Roy. Soc. London} \textbf{147} (1857),
745--803.
\newblock \href{https://doi.org/10.1098/rstl.1857.0037}
{doi:10.1098/rstl.1857.0037}.

\bibitem{CNN}
R.~Chhaibi, J.~Najnudel and A.~Nikeghbali,
\newblock The circular unitary ensemble and the Riemann zeta function:
the microscopic landscape and a new approach to ratios,
\newblock \emph{Invent. Math.} \textbf{207} (2017), 23--113.
\newblock doi:\href{https://doi.org/10.1007/s00222-016-0669-1}{10.1007/s00222-016-0669-1}.

\bibitem{Deligne}
P.~Deligne,
\newblock La conjecture de Weil. I,
\newblock \emph{Publ. Math. Inst. Hautes \'Etudes Sci.} \textbf{43} (1974),
273--307.
\newblock \href{https://doi.org/10.1007/BF02684373}
{doi:10.1007/BF02684373}.

\bibitem{Forrester}
P.~J. Forrester,
\newblock Joint moments of a characteristic polynomial and its derivative
for the circular $\beta$-ensemble,
\newblock \emph{Probab. Math. Phys.} \textbf{3} (2022), 145--170.

\bibitem{Fujii1974}
A.~Fujii,
\newblock On the zeros of Dirichlet $L$-functions. I,
\newblock \emph{Trans. Amer. Math. Soc.} \textbf{196} (1974), 225--235.

\bibitem{Fujii1981}
A.~Fujii,
\newblock On the zeros of Dirichlet $L$-functions. II (with corrections to ``On the zeros of Dirichlet $L$-functions. I'' and the subsequent papers),
\newblock \emph{Trans. Amer. Math. Soc.} \textbf{267} (1981), 33--40.

\bibitem{Fujii1999}
A.~Fujii,
\newblock Explicit formulas and oscillations,
\newblock in \emph{Emerging Applications of Number Theory}, D.~Hejhal, J.~Friedman, M.~Gutzwiller and A.~Odlyzko (eds.), Springer, 1999, 219--267.

\bibitem{GhorpadeLachaud}
S.~R. Ghorpade and G.~Lachaud,
\newblock \'Etale cohomology, Lefschetz theorems and number of points of singular varieties over finite fields,
\newblock \emph{Mosc. Math. J.} \textbf{2} (2002), no.~3, 589--631;
Corrigenda and Addenda, \textbf{9} (2009), no.~2, 431--438.
\newblock Corrected and revised version: \href{https://arxiv.org/abs/0808.2169}{arXiv:0808.2169}.

\bibitem{Kallenberg}
O.~Kallenberg,
\newblock \emph{Random Measures, Theory and Applications},
\newblock Springer, Cham, 2017.

\bibitem{MaplesRodgers}
K.~Maples and B.~Rodgers,
\newblock Bootstrapped zero density estimates and a central limit theorem for the zeros of the zeta function,
\newblock \emph{Int. J. Number Theory} \textbf{11} (2015), 2087--2107.
\newblock \href{https://doi.org/10.1142/S1793042115500918}{doi:10.1142/S1793042115500918}.

\bibitem{Milne}
J.~S. Milne,
\newblock \emph{Lectures on \'Etale Cohomology}, version~2.21, 2013.
\newblock Available at \url{https://www.jmilne.org/math/CourseNotes/LEC.pdf}.

\bibitem{Companion}
J.~Najnudel and A.~Nikeghbali,
\newblock Random entire functions from their zeros: characteristic
polynomials and the stochastic zeta function,
\newblock Working preprint, 2026.

\bibitem{Mesoscopic}
J.~Najnudel and A.~Nikeghbali,
\newblock Holomorphic coloured-noise limits for mesoscopic resolvents,
\newblock Working preprint, 2026.

\bibitem{Bead}
J.~Najnudel and B.~Vir\'ag,
\newblock The bead process for beta ensembles,
\newblock \emph{Probab. Theory Related Fields} \textbf{179} (2021), 589--647.

\bibitem{Variance}
J.~Najnudel and B.~Vir\'ag,
\newblock Uniform point variance bounds in classical beta ensembles,
\newblock \emph{Random Matrices Theory Appl.} \textbf{10} (2021),
no.~4, 2150033.

\bibitem{PillaiMeng}
N.~S. Pillai and X.-L. Meng,
\newblock An unexpected encounter with Cauchy and L\'evy,
\newblock \emph{Ann. Statist.} \textbf{44} (2016), no.~5, 2089--2097.
\newblock \href{https://doi.org/10.1214/15-AOS1407}{doi:10.1214/15-AOS1407}.

\bibitem{PitmanWilliams}
E.~J.~G. Pitman and E.~J. Williams,
\newblock Cauchy-distributed functions of Cauchy variates,
\newblock \emph{Ann. Math. Statist.} \textbf{38} (1967), no.~3, 916--918.
\newblock \href{https://doi.org/10.1214/aoms/1177698885}{doi:10.1214/aoms/1177698885}.

\bibitem{Selberg}
A.~Selberg,
\newblock Contributions to the theory of the Riemann zeta-function,
\newblock \emph{Arch. Math. Naturvid.} \textbf{48} (1946), no.~5, 89--155.

\bibitem{Sodin}
S.~Sodin,
\newblock On the critical points of random matrix characteristic polynomials
and of the Riemann $\xi$-function,
\newblock \emph{Q. J. Math.} \textbf{69} (2018), 183--210.

\bibitem{Soshnikov}
A.~Soshnikov,
\newblock Determinantal random point fields,
\newblock \emph{Russian Math. Surveys} \textbf{55} (2000), no.~5, 923--975.

\bibitem{Stein}
E.~M. Stein,
\newblock \emph{Singular Integrals and Differentiability Properties of Functions},
\newblock Princeton Mathematical Series~30, Princeton University Press, Princeton, NJ, 1970.

\bibitem{Titchmarsh}
E.~C. Titchmarsh,
\newblock \emph{The Theory of the Riemann Zeta-function},
\newblock second edition, revised by D.~R. Heath-Brown, Clarendon Press, 1986.

\bibitem{ValkoVirag}
B.~Valk\'o and B.~Vir\'ag,
\newblock The many faces of the stochastic zeta function,
\newblock \emph{Geom. Funct. Anal.} \textbf{32} (2022), 1160--1231.

\bibitem{Weil}
A.~Weil,
\newblock \emph{Sur les courbes alg\'ebriques et les vari\'et\'es qui s'en
d\'eduisent},
\newblock Hermann, Paris, 1948.

\bibitem{Williams}
E.~J. Williams,
\newblock Cauchy-distributed functions and a characterization of the Cauchy distribution,
\newblock \emph{Ann. Math. Statist.} \textbf{40} (1969), no.~3, 1083--1085.
\newblock \href{https://doi.org/10.1214/aoms/1177697613}{doi:10.1214/aoms/1177697613}.

\end{thebibliography}
\end{document}